\def\opn#1#2{\def#1{\operatorname{#2}}} 
\opn\chara{char} \opn\length{\ell}
\opn\projdim{proj\,dim} \opn\injdim{inj\,dim} \opn\rank{rank}
\opn\depth{depth} \opn\grade{grade} \opn\height{height}
\opn\embdim{emb\,dim} \opn\codim{codim}
\opn\Tr{Tr} \opn\bigrank{big\,rank}
\opn\superheight{superheight}\opn\lcm{lcm}
\opn\trdeg{tr\,deg}%
\opn\reg{reg} \opn\lreg{lreg}
\opn\Ker{Ker} \opn\Coker{Coker} \opn\Im{Im} \opn\Hom{Hom}
\opn\Tor{Tor} \opn\Ext{Ext} \opn\End{End} \opn\Aut{Aut} \opn\id{id}
\opn\nat{nat}
\opn\pff{pf}
\opn\Pf{Pf} \opn\GL{GL} \opn\SL{SL} \opn\mod{mod} \opn\ord{ord}
\def\Implies{\ifmmode\Longrightarrow \else
     \unskip${}\Longrightarrow{}$\ignorespaces\fi}
\def\implies{\ifmmode\Rightarrow \else
     \unskip${}\Rightarrow{}$\ignorespaces\fi}
\def\iff{\ifmmode\Longleftrightarrow \else
     \unskip${}\Longleftrightarrow{}$\ignorespaces\fi}
\newtheorem{Theorem}{Theorem}[section]
\newtheorem{Lemma}[Theorem]{Lemma}
\newtheorem{Corollary}[Theorem]{Corollary}
\newtheorem{Proposition}[Theorem]{Proposition}
\newtheorem{Remark}[Theorem]{Remark}
\newtheorem{Example}[Theorem]{Example}
\newtheorem{Algorithm}[Theorem]{Algorithm}
\theoremstyle{definition}
\opn\ini{in} \opn\inm{inm} \opn\Sym{Sym} \opn\diag{diag}
\opn\Ii{(i)} \opn\Iii{(ii)}
\title{ Adaptive finite element method for elliptic optimal control problems: Convergence and optimality}
\author{Wei Gong $^*$}
\author{Ningning Yan $^\diamond$}
\thanks{$^*$ LSEC, Institute of Computational Mathematics, Academy of Mathematics and Systems Science, Chinese Academy of Sciences, Beijing 100190, China.
 Email: {\tt wgong@lsec.cc.ac.cn}}
\thanks{$^\diamond$ NCMIS, LSEC, Institute of Systems Science, Academy of Mathematics and Systems Science, Chinese Academy of Sciences, Beijing 100190, China.
 Email: {\tt ynn@amss.ac.cn}}
\date{\today}
\begin{document}
\maketitle

{\bf Abstract:}\hspace*{10pt} {In this paper we consider the convergence analysis of adaptive finite element method for elliptic optimal control problems with pointwise control constraints. We use variational discretization concept to discretize the control variable and piecewise linear and continuous finite elements to approximate the state variable. Based on the well-established convergence theory of AFEM for elliptic boundary value problems, we rigorously prove  the convergence and quasi-optimality of AFEM for optimal control problems with respect to the state and adjoint state variables, by using the so-called perturbation argument. Numerical experiments confirm our theoretical analysis.  }

{{\bf Keywords:}\hspace*{10pt} optimal control problem,
 elliptic equation, control constraint, adaptive finite element method, convergence and optimality }

{\bf Subject Classification:} 49J20, 65K10, 65N12, 65N15, 65N30.

\section{Introduction}
\setcounter{equation}{0}
Adaptive finite element method (AFEM for short), contributed to the pioneer work of Babu\v{s}ka and Rheinboldt (\cite{Babuska}), becomes nowadays a popular approach in the community of engineering and scientific computing. It aims at distributing more mesh nodes around the area where the singularities happen to save the computational cost. Various types of reliable and efficient a posteriori error estimators, which are used to detect the location of singularity and  essential for the success of AFEM, have been developed in the last decades for different kind of problems, we refer to \cite{Verfuth} for an overview. 

Although AFEM has been successfully applied for more than three decades, the convergence analysis is rather recent which started with D\"{o}rfler \cite{Dorfler} and was further studied in \cite{Binev,Morin,Nochetto,Mekchay,Cascon}. Besides convergence, optimality is another important issue in AFEM which was firstly addressed by Binev et al. \cite{Binev} and further studied by Stevenson (\cite{Stevenson,Rob}). The so-called D\"{o}rfler's marking proposed in \cite{Dorfler} and quasi-error introduced in \cite{Cascon} consisting of the sum of the energy error and the scaled estimator are crucial to prove the contraction of the errors and quasi-optimal cardinality of the standard AFEM which avoids marking for oscillation (\cite{Dorfler}) and circumvents the interior node property of mesh refinement (\cite{Morin,Nochetto}). 

AFEM also finds successful application in optimal control problems governed by partial differential equations, starting from Liu, Yan \cite{Liu2} and Becker, Kapp, Rannacher \cite{Becker}. In \cite{Becker} the authors proposed a dual-weighted goal-oriented adaptivity for optimal control problems while in \cite{Liu2} residual type a posteriori error estimates were derived. We refer to \cite{Hoppe,Hoppe1,Li,Liu5,Liu3,Liu4,LiuYan08book} for more details of recent advance. Recently, Kohls, R\"{o}sch and Siebert derived in \cite{Kohls} an error equivalence property which enables one to derive reliable and efficient a posteriori error estimators for the optimal control problems with either variational discretization or full control discretization. 

There also exist some attempts to prove the convergence of AFEM for optimal control problems. In \cite{Gaevskaya} the authors considered the piecewise constant approximation of the control variable and gave a error reduction property for the quadruplet $(u,y,p,\sigma)$, where $u,y,p$ denote the optimal control, state, and adjoint state variables and $\sigma$ the associated co-control variable. However, additional requirement on the strict complementarity of the continuous problem and non-degeneracy property of the discrete control problem are assumed and marking strategy is extended to include the discrete free boundary between the active and inactive control sets. In \cite{Mao} the authors viewed the control problems as a nonlinear elliptic system of the state and adjoint variables and gave a convergence proof for adaptive algorithm involving the marking of data oscillation. In  \cite{Kohls1} the authors proved that the sequence of adaptively generated discrete solutions converged to the true solutions  for optimal control problems, but obtained only the plain convergence of adaptive algorithm without convergence rate and optimality. In this paper we intend to give a rigorous convergence proof for the adaptive finite element algorithm of elliptic optimal control problem  in an optimal control framework. We want to stress that the AFEM adopted in the current paper uses D\"{o}rfler's marking (\cite{Dorfler}) and is  a standard algorithm in that it employs only the error indicators and does not use the oscillation indicators.

Inspired by the work \cite{Dai} of Dai, Xu and Zhou where the convergence and optimality of AFEM for elliptic eigenvalue problem are proved by exploiting the certain relationship between the finite element eigenvalue approximation and the associated finite element boundary value approximation, in this paper we will provide a rigorous convergence analysis of the adaptive finite element algorithm 
for the optimal control problems governed by linear elliptic equation. Under mild assumption on the  initial mesh from which the adaptive algorithm starts, we show that the energy norm errors of the state and adjoint state variables are equivalent to the boundary value approximations of the state and adjoint state equations up to a higher order term. Then based on the well-known convergence result of AFEM for elliptic boundary value problems, we are able to prove the convergence of AFEM for the optimal control problems (OCPs for short). To be more specific, the AFEM for OCPs is a contraction, for the sum of the energy errors and the scaled error estimators of the state $y$ and the adjoint state $p$, between two consecutive adaptive loops. We also show that the AFEM yields a decay rate of the energy errors of the state $y$ and the adjoint state $p$ plus oscillations of the state and adjoint state equations in terms of the number of degrees of freedom. This result is an improvement over the plain convergence result presented in \cite{Kohls1}.

The rest of the paper is organised as follows. In Section 2 we recall some well-known results on the convergence analysis of AFEM for elliptic boundary values problems. In Section 3 we introduce the finite element approximation of the optimal control problems and derive a posteriori error estimates. Adaptive finite element algorithm for the optimal control problems based on D\"{o}rfler's marking is also presented. In Section 4 we give a rigorous convergence analysis of the AFEM for optimal control problems and the quasi-optimal cardinality is proved in Section 5. Numerical experiments are carried out in Section 6 to validate our theoretical result. Finally, we give a conclusion in Section 7 and outlook the possible extensions and future work.

Let $\Omega\subset\mathbb{R}^d$ ($d=2,3$) be a bounded polygonal or polyhedral domain. We denote by $W^{m,q}(\Omega)$ the usual Sobolev space of order $m\geq 0$, $1\leq q<\infty$ with norm $\|\cdot\|_{m,q,\Omega}$ and seminorm $|\cdot|_{m,q,\Omega}$. For $q=2$ we denote $W^{m,q}(\Omega)$ by $H^m(\Omega)$ and $\|\cdot\|_{m,\Omega}=\|\cdot\|_{m,2,\Omega}$, which is a Hilbert space. Note that $H^0(\Omega)=L^2(\Omega)$ and $H_0^1(\Omega)=\{v\in H^1(\Omega):\ v=0\ \mbox{on}\ \partial\Omega\}$. We denote $C$ a generic positive constant which may stand for different values at its different occurrences but does not depend on mesh size. We use the symbol $A\lesssim B$ to denote $A\leq CB$ for some constant $C$ that is independent of mesh size.

\section{Preliminaries}
\setcounter{equation}{0}
In this section, we recall some well-known results on the adaptive finite element approximation to a linear elliptic boundary value problem, which are then used for the convergence analysis of AFEM for optimal control problems. Some of the results are collected from \cite{Cascon} and \cite{Dai}, see also \cite{He}. 

Consider the following second order elliptic equation
\begin{equation}\label{elliptic}
\left\{\begin{array}{llr}
Ly=f \quad&\mbox{in}\ \Omega, \\
 \ y=0  \quad &\mbox{on}\ \partial\Omega,
\end{array}\right.
\end{equation}
where $L$ is a linear second order elliptic operator of the following form:
\begin{eqnarray}
Ly:= -\sum\limits_{i,j=1}^d\frac{\partial}{\partial x_j}(a_{ij}\frac{\partial y}{\partial x_i})+cy.\nonumber
\end{eqnarray}
We denote $L^*$  the adjoint operator of $L$
\begin{eqnarray}
L^*y:= -\sum\limits_{i,j=1}^d\frac{\partial}{\partial x_j}(a_{ji}\frac{\partial y}{\partial x_i})+cy.\nonumber
\end{eqnarray}
Here $a_{ij}\in W^{1,\infty}(\Omega)$ $(i,j=1,\cdots,d)$ is symmetric, positive definite and $0\leq c<\infty$. We denote $A=(a_{ij})_{d\times d}$ and $A^*$ its adjoint. Let 
\begin{eqnarray}
a(y,v)=\int_\Omega\sum\limits_{i,j=1}^da_{ij}\frac{\partial y}{\partial x_i}\frac{\partial v}{\partial x_j}+cyv,\quad \forall y,v\in H_0^1(\Omega).\nonumber
\end{eqnarray}
It is clear that $a(\cdot,\cdot)$ is a bounded bilinear form over $H_0^1(\Omega)$ and defines a norm $\|\cdot\|_{a,\Omega}=\sqrt{a(\cdot,\cdot)}$ which is equivalent to $\|\cdot\|_{1,\Omega}$.

The standard weak form of (\ref{elliptic}) reads as follows: Find $y\in H_0^1(\Omega)$ such that
\begin{eqnarray}
a(y,v)=(f,v)\quad \forall v\in H_0^1(\Omega).\label{elliptic_weak}
\end{eqnarray}
For each $f\in H^{-1}(\Omega)$ the above problem admits a unique solution by the well-known Lax-Milgram theorem. Since the elliptic equation (\ref{elliptic_weak}) is linear with respect to the right hand side $f$, we can define
a linear solution operator $S : L^2(\Omega) \rightarrow H_0^1(\Omega)$ such that $y = Sf$. 

Let $\mathcal{T}_h$ be a regular triangulation of $\Omega$ such that
$\bar\Omega=\cup_{T\in\mathcal{T}_h}\bar T$. We assume that $\mathcal{T}_h$ is shape regular in the sense that: there exists a constant $\gamma^*>0$ such that $\frac{h_T}{\rho_T}\leq \gamma^*$ for all $T\in \mathcal{T}_h$, where $h_T$ denotes the diameter of $T$ and $\rho_T$ is the diameter of the biggest ball contained in $T$. We set $h=\max_{T\in \mathcal{T}_h}h_T$. In this paper, we use $\mathcal{E}_h$ to denote the set of interior faces (edges or sides) of $\mathcal{T}_h$ and $\# \mathcal{T}_h$ to denote the number of elements of $\mathcal{T}_h$. 

On $\mathcal{T}_h$
we construct a family of nested finite element spaces $V_{h}$ consisting of  piecewise linear and continuous polynomials 
such that $V_h\subset C(\bar\Omega)\cap H_0^1(\Omega)$. We define the standard Galerkin projection operator $\mathcal{R}_h: H_0^1(\Omega)\rightarrow V_h$ by (\cite{Ciarlet})
\begin{eqnarray}
a(y-\mathcal{R}_hy,v_h)=0\quad\forall v_h\in V_h,\label{Galerkin_proj}
\end{eqnarray} 
which satisfies the following stability result
\begin{eqnarray}
\|\mathcal{R}_hy\|_{a,\Omega}\lesssim \|y\|_{a,\Omega}\quad \forall y\in H_0^1(\Omega).\label{Ritz_stab}
\end{eqnarray}

A standard finite element approximation to (\ref{elliptic_weak}) can then be formulated as: Find $y_h\in V_h$ such that
\begin{eqnarray}
a(y_h,v_h)=(f,v_h)\quad \forall v_h\in V_h.\label{elliptic_weak_h}
\end{eqnarray}
Similarly, we can define a discrete solution operator $S_h : L^2(\Omega) \rightarrow V_h$ such that $y_h = S_hf$. Thus, we have $y_h=\mathcal{R}_hy=\mathcal{R}_hSf$.

For the following purpose, we follow the idea of \cite{Dai} to introduce the quantity $\kappa (h)$ as follows
\begin{eqnarray}
\kappa (h)=\sup\limits_{f\in L^2(\Omega),\|f\|_{0,\Omega}=1}\inf\limits_{v_h\in V_h}\|Sf-v_h\|_{a,\Omega}.
\end{eqnarray}
We note that the quantity $\kappa(h)$ is determined by the regularity of $Sf$ which is further influenced by the property of domain $\Omega$.  Indeed, if the boundary of $\Omega$ is smooth, like $\mathcal{C}^1$, the additional regularity $Sf\in H^2(\Omega)$ holds and thus $\kappa(h)=O(h)$. This is still true for polygonal or polyhedral boundaries if the domain is convex. The regularity is reduced, however, in the vicinity of nonconvex portions of polygonal or polyhedral boundaries. Grisvard proved in \cite{Grisvard2} the precise regularity results (Theorem 2.4.3 for the two-dimensional case and Corollary 2.6.7 for the three-dimensional case): there exists an $\varepsilon \in (0,{1\over 2}]$, which depends on the shape of the domain, such that  $Sf\in H^{{3\over 2}+\varepsilon}(\Omega)$ for each $f\in L^2(\Omega)$. Obviously, $\kappa(h)\ll 1$ for $h\in (0,h_0)$ if $h_0\ll 1$.

The following results are standard and can be found in, e.g., \cite{Ciarlet,Dai}
\begin{Proposition}For each $f\in L^2(\Omega)$, there hold
\begin{eqnarray}
\|Sf-S_hf\|_{a,\Omega}\lesssim \kappa(h)\|f\|_{0,\Omega}\label{proj_error}
\end{eqnarray}
and
\begin{eqnarray}
\|Sf-S_hf\|_{0,\Omega}\lesssim \kappa(h)\|Sf-S_hf\|_{a,\Omega}.\label{L2_error}
\end{eqnarray}
\end{Proposition}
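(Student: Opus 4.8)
The plan is to prove the two estimates \eqref{proj_error} and \eqref{L2_error} separately, both being classical consequences of Galerkin orthogonality combined with a duality (Aubin--Nitsche) argument, tailored to the notation $S$, $S_h$ and $\kappa(h)$ introduced above.

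For \eqref{proj_error}, I would start from the identity $S_h f = \mathcal{R}_h S f$ noted just before the statement, so that $Sf - S_h f = Sf - \mathcal{R}_h Sf$ is exactly the error of the Galerkin projection of $Sf$. By the Galerkin orthogonality \eqref{Galerkin_proj} and the Cauchy--Schwarz inequality in the energy inner product, $\|Sf - \mathcal{R}_h Sf\|_{a,\Omega} = \inf_{v_h \in V_h} \|Sf - v_h\|_{a,\Omega}$ (Céa's lemma, which holds with constant $1$ in the energy norm since $a(\cdot,\cdot)$ is symmetric here — if one prefers not to invoke symmetry, Céa gives it up to a constant, which is all that is needed). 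Taking $f$ with $\|f\|_{0,\Omega}=1$ and using the very definition of $\kappa(h)$ as the supremum over such $f$ of this best-approximation error gives $\|Sf - S_h f\|_{a,\Omega} \le \kappa(h)$; rescaling by $\|f\|_{0,\Omega}$ for general $f$ yields \eqref{proj_error}.

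For \eqref{L2_error}, the duality argument: set $e = Sf - S_h f \in H_0^1(\Omega)$ and let $w \in H_0^1(\Omega)$ solve the adjoint problem $a(v,w) = (e,v)$ for all $v \in H_0^1(\Omega)$, i.e. $w = S^* e$ where $S^*$ is the solution operator for $L^*$; note $\|e\|_{0,\Omega}^2 = a(e,w)$. Using Galerkin orthogonality $a(e,v_h)=0$ for all $v_h\in V_h$, we may subtract any $v_h$ and estimate $\|e\|_{0,\Omega}^2 = a(e, w - v_h) \le \|e\|_{a,\Omega}\, \|w - v_h\|_{a,\Omega}$, then take the infimum over $v_h$. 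The adjoint operator $S^*$ enjoys the same regularity as $S$ (the coefficient matrix $A^*$ satisfies the same hypotheses), so $\inf_{v_h}\|S^*e - v_h\|_{a,\Omega} \le \kappa(h)\|e\|_{0,\Omega}$ by the analogue of the definition of $\kappa(h)$ for $S^*$ — here one uses that $\kappa(h)$ controls best approximation of $S^*g$ as well, either by redefining $\kappa(h)$ symmetrically or by observing the regularity estimates of Grisvard apply verbatim to $L^*$. Dividing by $\|e\|_{0,\Omega}$ gives \eqref{L2_error}.

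The only genuine subtlety — and the step I would be most careful about — is the claim that $\kappa(h)$, as literally defined via $S$, also bounds the finite element best-approximation error for the adjoint solutions $S^* g$; this is what makes the duality step legitimate. The clean resolution is to note that since $A$ is symmetric positive definite and the hypotheses on $a_{ij}$ and $c$ are symmetric in the roles of $L$ and $L^*$, every regularity and approximation statement used for $S$ holds for $S^*$ with the same asymptotic rate, so $\sup_{\|g\|_{0,\Omega}=1}\inf_{v_h}\|S^*g - v_h\|_{a,\Omega} \lesssim \kappa(h)$; absorbing constants into the $\lesssim$ is harmless. Everything else is routine: Galerkin orthogonality, Cauchy--Schwarz in the energy inner product, and the definition of $\kappa(h)$.
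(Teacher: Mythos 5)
Your proof is correct and is exactly the standard argument (C\'ea's lemma in the energy norm plus the Aubin--Nitsche duality step) that the paper itself omits, stating only that the result "is standard" and deferring to \cite{Ciarlet,Dai}. The one subtlety you flag is in fact even milder than you suggest: since the paper assumes $a_{ij}=a_{ji}$, the form $a(\cdot,\cdot)$ is symmetric, hence $S^*=S$ and the quantity $\kappa(h)$ as literally defined already controls the best approximation of the dual solution $w=S^*e$, so no separate regularity or redefinition argument for $L^*$ is needed.
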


Now we are in the position to review the residual type a posteriori error estimator for the finite element approximation of elliptic boundary value problem. We define the element residual $\tilde r_T(y_h)$ and the jump residual $\tilde j_E(y_h)$ by
\begin{eqnarray}
\tilde r_T(y_h):&=& f-Ly_h=f+\nabla\cdot(A\nabla y_h)-cy_h\quad \mbox{in}\ \mathcal{T}_h,\label{element_res}\\
\tilde j_E(y_h):&=&[A\nabla y_h]_E\cdot n_E\quad\mbox{on}\ E\in \mathcal{E}_h,\label{jump}
\end{eqnarray}
where $[A\nabla y_h]_E\cdot n_E$ denotes the jump of $A\nabla y_h$ across the common side $E$ of elements $T^+$ and $T^-$, $n_E$ denotes the outward normal oriented to $T^-$. For each element $T\in \mathcal{T}_h$, we define the local error indicator $\tilde \eta_{h}(y_h,T)$ by
\begin{eqnarray}
\tilde\eta_{h}(y_h,T):= \Big(h_T^2\|\tilde r_T(y_h)\|_{0,T}^2+\sum\limits_{E\in\mathcal{E}_h,E\subset\partial T}h_E\|\tilde j_E(y_h)\|_{0,E}^2\Big)^{1\over 2}.\label{local_elliptic}
\end{eqnarray}
Then on a subset $\omega\subset \Omega$, we define the error estimator $\tilde\eta_{h}(y_h,\omega)$ by
\begin{eqnarray}
\tilde\eta_{h}(y_h,\omega):=\Big(\sum\limits_{T\in\mathcal{T}_h,T\subset\omega}\tilde\eta_{h}^2(y_h,T)\Big)^{1\over 2}.\label{estimator_elliptic}
\end{eqnarray}
Thus, $\tilde\eta_{h}(y_h,\Omega)$ constitutes the error estimator on $\Omega$ with respect to $\mathcal{T}_h$. 

For $f\in L^2(\Omega)$ we also need to define the data oscillation as (see \cite{Morin,Nochetto})
\begin{eqnarray}
\mbox{osc}(f,T):=\|h_T(f-\bar f_T)\|_{0,T},\quad \mbox{osc}(f,\mathcal{T}_h):=\Big(\sum\limits_{T\in\mathcal{T}_h}\mbox{osc}^2(f,T)\Big)^{1\over 2},\label{data_osc}
\end{eqnarray}
where $\bar f_T$ denotes the $L^2$-projection of $f$ onto piecewise constant space on $T$. It is easy to see that 
\begin{eqnarray}
\mbox{osc}(f_1+f_2,\mathcal{T}_h)\leq \mbox{osc}(f_1,\mathcal{T}_h)+\mbox{osc}(f_2,\mathcal{T}_h),\quad \forall f_1,f_2\in L^2(\Omega).\label{osc_linear}
\end{eqnarray}
For the above defined data oscillation we have the following lemma whose proof can be found in \cite[Lemma 2.4]{Dai}.
\begin{Lemma}\label{Lm:2.4}
There exists a constant $C_*$ which depends on $A$, the mesh regularity constant $\gamma^*$ and coefficient $c$, such that
\begin{eqnarray}
\mbox{osc}(Lv,\mathcal{T}_h)\leq C_*\|v\|_{a,\Omega},\quad \mbox{osc}(L^*v,\mathcal{T}_h)\leq C_*\|v\|_{a,\Omega}\quad\forall v\in V_h.\label{osc_bound}
\end{eqnarray}
\end{Lemma}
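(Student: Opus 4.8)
The plan is to exploit that every $v\in V_h$ is affine on each element $T\in\mathcal{T}_h$, so that $Lv$ and $L^*v$ are honest $L^2$ functions on $T$ with explicit bounds, and then to estimate the oscillation element by element. Fixing $T\in\mathcal{T}_h$, since $v$ is affine on $T$ all its second derivatives vanish, hence
$$
Lv|_T=\Bigl(-\nabla\cdot(A\nabla v)+cv\Bigr)\Big|_T=-\sum_{i,j=1}^d(\partial_j a_{ij})\,\partial_i v+cv .
$$
Because each $a_{ij}\in W^{1,\infty}(\Omega)$ the first term belongs to $L^\infty(T)\subset L^2(T)$ and $\|\nabla\cdot(A\nabla v)\|_{0,T}\le C_A\|\nabla v\|_{0,T}$, with $C_A$ depending only on $d$ and $\max_{i,j}\|a_{ij}\|_{1,\infty,\Omega}$. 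Note that no global elliptic regularity of $\Omega$ enters here: the finite element function is piecewise smooth, so a non-convex domain causes no trouble at this step.

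Next I would estimate $\|Lv-\overline{Lv}_T\|_{0,T}$ by splitting $Lv-\overline{Lv}_T=\bigl(g-\bar g_T\bigr)+c\,(v-\bar v_T)$, where $g:=-\nabla\cdot(A\nabla v)$. For the second-order piece I simply drop the best-constant term, $\|g-\bar g_T\|_{0,T}\le\|g\|_{0,T}\le C_A\|\nabla v\|_{0,T}$; for the zeroth-order piece I use the elementwise Poincar\'e inequality $\|v-\bar v_T\|_{0,T}\le C_P h_T|v|_{1,T}$, whose constant $C_P$ depends only on the shape-regularity constant $\gamma^*$. Using in addition the uniform positive definiteness of $A$ (ellipticity constant $\alpha>0$), which gives $\|\nabla v\|_{0,T}\le\alpha^{-1/2}\|v\|_{a,T}$, this leads to
$$
\mbox{osc}(Lv,T)=h_T\|Lv-\overline{Lv}_T\|_{0,T}\le h_T\bigl(C_A+c\,C_P\,h_T\bigr)\alpha^{-1/2}\|v\|_{a,T}.
$$
Since $h_T\le\operatorname{diam}(\Omega)$ (equivalently, $h_T$ is bounded by the size of the fixed initial mesh), squaring, summing over $T\in\mathcal{T}_h$ and using $\sum_{T}\|v\|_{a,T}^2=\|v\|_{a,\Omega}^2$ yields $\mbox{osc}(Lv,\mathcal{T}_h)\le C_*\|v\|_{a,\Omega}$ with $C_*$ depending only on $A$ (through $C_A$ and $\alpha$), on $\gamma^*$ (through $C_P$), on $c$, and --- harmlessly --- on $\operatorname{diam}(\Omega)$. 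The bound for $L^*$ follows by the identical computation, since $A^*$ has the same $W^{1,\infty}$ bounds and the same ellipticity constant as $A$, and the zeroth-order term is unchanged.

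There is no genuine obstacle here; the statement is elementary once the affineness of $v$ is used. The only points requiring attention are the bookkeeping of which terms carry a factor $h_T$ --- so that the constant $C_*$ does not depend on the mesh size --- and the observation that, unlike in the a priori or a posteriori analysis of the state equation, no regularity of the underlying boundary value problem on $\Omega$ is invoked.
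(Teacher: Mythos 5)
Your proof is correct and is essentially the argument behind the result the paper invokes: the paper gives no proof of Lemma \ref{Lm:2.4}, deferring to \cite[Lemma 2.4]{Dai}, and that proof is the same elementwise estimate (with an inverse inequality playing the role of your ``second derivatives vanish'' observation when higher-order elements are allowed). Two cosmetic points only: if $c$ is a variable coefficient, replace the split $c(v-\bar v_T)$ by the best-approximation property of $\overline{Lv}_T$ (or bound $\|cv-\overline{cv}_T\|_{0,T}\le\|c\|_{\infty,\Omega}\|v\|_{0,T}$), and your constant carries the harmless extra factor $\mathrm{diam}(\Omega)$ from bounding $h_T$, which is implicit in any bound of this form and does not affect the mesh-independence that the lemma asserts.
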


Now we can formulate the following global upper and lower bounds for the a posteriori error estimators of elliptic boundary value problems (see, e.g., \cite{Dorfler,Verfuth}):
\begin{eqnarray}
\|y-y_h\|^2_{a,\Omega}&\leq& \tilde C_1\tilde \eta^2_{h}(y_h,\Omega),\label{elliptic_upper}\\
\tilde C_2\tilde \eta_{h}^2(y_h,\Omega)&\leq& \|y-y_h\|^2_{a,\Omega}+\tilde C_3 \mbox{osc}^2(f-Ly_h,\mathcal{T}_h).\label{elliptic_lower}
\end{eqnarray}

For our following purpose we also need to study the adjoint equation of elliptic boundary value problem (\ref{elliptic}). For each $g\in L^2(\Omega)$, let $p\in H_0^1(\Omega)$ be the solution of the following adjoint equation
\begin{eqnarray}
a(v,p)=(g,v)\quad\forall v\in H_0^1(\Omega)\label{elliptic_weak_adjoint}
\end{eqnarray}
with  its finite element approximation
\begin{eqnarray}
a(v_h,p_h)=(g,v_h)\quad\forall v_h\in V_h.\label{elliptic_weak_adjoint_h}
\end{eqnarray}
We can also give the a posteriori global upper and lower error bounds:
\begin{eqnarray}
\|p-p_h\|^2_{a,\Omega}&\leq& \tilde C_1\tilde \eta^2_{h}(p_h,\Omega),\label{elliptic_upper_adjoint}\\
\tilde C_2\tilde \eta_{h}^2(p_h,\Omega)&\leq& \|p-p_h\|^2_{a,\Omega}+\tilde C_3 \mbox{osc}^2(g-L^*p_h,\mathcal{T}_h).\label{elliptic_lower_adjoint}
\end{eqnarray}

To analyse the adaptive finite element approximation for optimal control problem, we introduce a system of two source problems associated with the state and adjoint state equations, which is some trivial extension for the existing results of adaptive finite element approximation of scalar problem (see \cite{Cascon}). Specifically, we introduce the adaptive finite element algorithm to solve a system of elliptic boundary value problems (\ref{elliptic_weak}) and (\ref{elliptic_weak_adjoint}). 
There are different kinds of adaptive algorithms which differ from the marking strategies (see \cite{Mekchay,Morin,Nochetto}). Here we follow the D\"{o}rfler's marking introduced in \cite{Dorfler} which marks only the error estimator and avoids the marking for oscillation:
 \begin{Algorithm}\label{Alg:1.0}The D\"{o}rfler's marking strategy for BVPs
\begin{enumerate}
\item Given a parameter $0<\theta<1$;

\item Construct a minimal subset $\tilde{\mathcal{T}}_{h}\subset \mathcal{T}_h$ such that
\begin{equation*}
\sum\limits_{T\in\tilde{\mathcal{T}}_{h}}\big(\tilde \eta_h^2(y_h,T)+\tilde \eta_h^2(p_h,T)\big)\geq \theta\big(\tilde\eta_h^2(y_h,\Omega)+\tilde \eta_h^2(p_h,\Omega)\big).
\end{equation*}

\item Mark all the elements in $\tilde{\mathcal{T}}_{h}$.

\end{enumerate}
\end{Algorithm}

The adaptive algorithm for solving elliptic boundary value problems can then be described as follows (see \cite{Cascon}):
\begin{Algorithm}\label{Alg:1.1}
Adaptive finite element method for BVPs:
\begin{enumerate}
\item Given an initial mesh $\mathcal{T}_{h_0}$ with mesh size $h_0$ and construct the finite element space $V_{h_0}$.

\item Set $k=0$, solve (\ref{elliptic_weak_h}) and (\ref{elliptic_weak_adjoint_h}) to obtain $(y_{h_k},p_{h_k})\in V_{h_k}\times V_{h_k}$.

\item Compute the local error indicators $\tilde\eta_{h_k}(y_{h_k},T)$ and $\tilde\eta_{h_k}(p_{h_k},T)$ for each $T\in\mathcal{T}_{h_k}$.

\item Construct $\tilde{ \mathcal{T}}_{h_k}\subset\mathcal{T}_{h_k}$ by the marking Algorithm \ref{Alg:1.0}.

\item Refine $\tilde{ \mathcal{T}}_{h_k}$ to get a new conforming mesh $\mathcal{T}_{h_{k+1}}$.

\item Construct the finite element space $V_{h_{k+1}}$, solve (\ref{elliptic_weak_h}) and (\ref{elliptic_weak_adjoint_h}) to obtain $(y_{h_{k+1}},p_{h_{k+1}})\in V_{h_{k+1}}\times V_{h_{k+1}}$.

\item Set $k=k+1$ and go to Step (3).

\end{enumerate}
\end{Algorithm}

We denote $\mathbb{T}$ the class of all conforming refinements by bisection of $\mathcal{T}_{h_0}$ (see \cite{Cascon} for more details). Given a fixed number $b\geq 1$, for any $\mathcal{T}_{h_k}\in\mathbb{T}$ and $\mathcal{M}_{h_k}\subset \mathcal{T}_{h_k}$ of marked elements,
\begin{eqnarray}
\mathcal{T}_{h_{k+1}}=\mbox{REFINE}(\mathcal{T}_{h_k},\mathcal{M}_{h_k})\nonumber
\end{eqnarray}
outputs a conforming triangulation $\mathcal{T}_{h_{k+1}}\in \mathbb{T}$, where at least all elements of $\mathcal{M}_{h_k}$ are bisected $b$ times. We define $R_{\mathcal{T}_{h_k}\rightarrow \mathcal{T}_{h_{k+1}}}=\mathcal{T}_{h_k}\backslash(\mathcal{T}_{h_k}\cap \mathcal{T}_{h_{k+1}})$ as the set of refined elements satisfies $\mathcal{M}_{h_k}\subset R_{\mathcal{T}_{h_k}\rightarrow \mathcal{T}_{h_{k+1}}}$.

Then we can formulate the following standard result on the complexity of refinement, see \cite[Lemma 2.3]{Cascon} and \cite{Rob} for more details.
\begin{Lemma}\label{Lm:cardi}
Assume that $\mathcal{T}_{h_0}$ verifies condition (b) of Section 4 in \cite{Rob}. Let $\mathcal{T}_{h_k}$ ($k\geq 0$) be a sequence of conforming and nested triangulations of $\Omega$ generated by REFINE starting from the initial mesh $\mathcal{T}_{h_0}$. 
Assume that $\mathcal{T}_{h_{k+1}}$ is generated from  $\mathcal{T}_{h_k}$ by $\mathcal{T}_{h_{k+1}}=\mbox{REFINE}(\mathcal{T}_{h_k},\mathcal{M}_{h_k})$ with a subset $\mathcal{M}_{h_k}\subset \mathcal{T}_{h_k}$. Then there exists a constant $\hat C_0$ depending on $\mathcal{T}_{h_0}$ and $b$ such that
\begin{eqnarray}
\#\mathcal{T}_{h_{k+1}}-\#\mathcal{T}_{h_0}\leq\hat C_0\sum\limits_{i=0}^k\# \mathcal{M}_{h_{i}}\quad\forall k\geq 1.\label{cardi}
\end{eqnarray}
\end{Lemma}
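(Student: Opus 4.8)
The plan is to follow the combinatorial accounting argument of Binev, Dahmen and DeVore, as sharpened by Stevenson; only the cardinalities of the meshes are involved, not the underlying PDE, so this is a purely combinatorial statement about newest-vertex bisection on a shape-regular initial triangulation. The strategy is to charge every element that is created during the whole adaptive process $\mathcal{T}_{h_0}\to\mathcal{T}_{h_1}\to\cdots\to\mathcal{T}_{h_{k+1}}$ to a marked element in $\bigcup_{i=0}^{k}\mathcal{M}_{h_i}$, in such a way that no marked element is charged more than a fixed number of times.

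First I would recall the bisection framework. Every element $T$ carries a generation $g(T)$, the number of successive bisections separating it from its ancestor in $\mathcal{T}_{h_0}$, so that $g(T)=0$ exactly when $T\in\mathcal{T}_{h_0}$, and a bisection of $T$ produces two children of generation $g(T)+1$. The operation $\mathrm{REFINE}(\mathcal{T}_{h_i},\mathcal{M}_{h_i})$ proceeds in two stages: each $T\in\mathcal{M}_{h_i}$ is first bisected $b$ times, and then a completion (closure) stage recursively bisects additional elements until the triangulation is conforming again. Since the meshes are nested, the counts $\#\mathcal{T}_{h_i}$ are nondecreasing and $\#\mathcal{T}_{h_{k+1}}-\#\mathcal{T}_{h_0}=\sum_{i=0}^{k}(\#\mathcal{T}_{h_{i+1}}-\#\mathcal{T}_{h_i})$ equals, up to a bounded per-element factor coming from binary bisection, the number of elements created along the way. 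The only real difficulty is that within a single step the closure can create far more elements than were marked, so a naive step-by-step bound is false; the charging scheme must be global.

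The key input is the labeling condition (b) on $\mathcal{T}_{h_0}$ from \cite{Rob}: with this choice of refinement edges the recursive completion always terminates, and, crucially, refinement propagates only ``downhill'' — if refining an element of generation $\ell$ forces, through the closure, the bisection of an element of generation $m$, then $m\le\ell$. Combining this with shape regularity one shows that, for a fixed bisected element $T$, the elements whose creation can be traced back to $T$, grouped by generation, form sets whose cardinalities decay geometrically in the generation gap (each such set lies within a bounded number of neighbour rings around the descendants of $T$). Summing the geometric series gives a constant $\Lambda$, depending only on $\mathcal{T}_{h_0}$ (through $\gamma^*$ and the labeling) and on $b$, such that each marked element is charged at most $\Lambda$ times. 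Hence $\#\mathcal{T}_{h_{k+1}}-\#\mathcal{T}_{h_0}\le\Lambda\sum_{i=0}^{k}\#\mathcal{M}_{h_i}$, which is the claim with $\hat C_0=\Lambda$.

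The main obstacle is precisely the geometric-decay estimate for the closure: showing that the refinement of one element cannot cascade uncontrollably through the mesh, and that the total work charged to a single bisection sums to a finite constant. This is exactly where the initial-mesh labeling condition (b) is indispensable, and it is the content of the cited results \cite[Lemma 2.3]{Cascon} and \cite{Rob}; the remaining bookkeeping — telescoping over $i$ and absorbing the bounded per-element factor from the $b$-fold bisection into the constant — is routine, and for the present paper we simply invoke those references.
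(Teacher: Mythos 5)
Your proposal is correct and, in substance, does what the paper does: Lemma \ref{Lm:cardi} is stated as the standard complexity-of-completion result for newest-vertex bisection under the initial-mesh labeling condition, and the paper gives no proof, simply citing \cite[Lemma 2.3]{Cascon} and \cite{Rob}, which is exactly where your sketch of the Binev--Dahmen--DeVore/Stevenson charging argument ends up as well. Your outline of that charging argument (geometric decay of the closure cascade in generation gap, bounded total charge per marked element) is an accurate summary of the cited proofs, so there is nothing to add.
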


We define 
\begin{eqnarray}
\|(y,p)\|_a^2=a(y,y)+a(p,p).\nonumber
\end{eqnarray}
The convergence of Algorithm \ref{Alg:1.1} based on the marking Algorithm \ref{Alg:1.0} is proven in \cite{Cascon} and now becomes a standard theory for the convergence analysis of AFEM for different kind of boundary value problems. The following Theorem \ref{Thm:elliptic_convergence}, Lemma \ref{Lm:distance} and Lemma \ref{Lm:decay} are extensions of corresponding results for single elliptic equation in \cite{Cascon} by some primary operations. We remark that in \cite{Zhou} the authors used the similar idea to prove the convergence of adaptive finite element computations for multiple eigenvalues.
\begin{Theorem}\label{Thm:elliptic_convergence}
Let $(y_{h_k},p_{h_k})\in V_{h_k}\times V_{h_k}$ be a sequence of finite element solutions of problems (\ref{elliptic_weak}) and (\ref{elliptic_weak_adjoint}) based on the adaptively refined mesh $\mathcal{T}_{h_k}$ produced by Algorithm \ref{Alg:1.1}. Then there exist constants $\tilde \gamma>0$ and $\tilde\beta\in (0,1)$, depending only on the shape regularity of meshes, the data and the parameters used in Algorithm \ref{Alg:1.1}, such that for any two consecutive iterates $k$ and $k+1$ we have
\begin{eqnarray}
&&\|(y-y_{h_{k+1}},p-p_{h_{k+1}})\|_{a}^2+\tilde\gamma\big(\tilde \eta^2_{h_{k+1}}(y_{h_{k+1}},\Omega)+\tilde \eta^2_{h_{k+1}}(p_{h_{k+1}},\Omega)\big)\nonumber\\
&\leq& \tilde\beta^2\Big(\|(y-y_{h_{k}},p-p_{h_{k}})\|_{a}^2+\tilde\gamma\big(\tilde \eta^2_{h_{k}}(y_{h_{k}},\Omega)+\tilde \eta^2_{h_{k}}(p_{h_{k}},\Omega)\big)\Big).\label{elliptic_convergence}
\end{eqnarray}
Here 
\begin{eqnarray}
\tilde \gamma :=\frac{1}{(1+\delta^{-1})C_*^2}\label{tilde_gamma}
\end{eqnarray}
with some constant $\delta\in (0,1)$.
\end{Theorem}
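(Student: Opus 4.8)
The plan is to transplant, essentially verbatim, the contraction argument of Cascón, Kreuzer, Nochetto and Siebert \cite{Cascon} from a single elliptic equation to the pair $(y,p)$, treating the combined indicator $\tilde\eta_h^2(y_h,T)+\tilde\eta_h^2(p_h,T)$ on which Algorithm \ref{Alg:1.0} actually marks as the scalar quantity in that argument. For brevity in this sketch write $e_k^2:=\|(y-y_{h_k},p-p_{h_k})\|_a^2$, $E_k^2:=\tilde\eta_{h_k}^2(y_{h_k},\Omega)+\tilde\eta_{h_k}^2(p_{h_k},\Omega)$ and $E_k(\mathcal{M})^2:=\tilde\eta_{h_k}^2(y_{h_k},\mathcal{M}_{h_k})+\tilde\eta_{h_k}^2(p_{h_k},\mathcal{M}_{h_k})$, where $\mathcal{M}_{h_k}\subset R_{\mathcal{T}_{h_k}\to\mathcal{T}_{h_{k+1}}}$ is the set marked at step $k$. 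The first step is the Pythagoras identity
\[
e_{k+1}^2=e_k^2-\|(y_{h_{k+1}}-y_{h_k},\,p_{h_{k+1}}-p_{h_k})\|_a^2 .
\]
Indeed, since $A$ is symmetric the form $a(\cdot,\cdot)$ is symmetric, so Galerkin orthogonality $a(y-y_{h_{k+1}},v_{h_{k+1}})=0$ and $a(v_{h_{k+1}},p-p_{h_{k+1}})=0$ (valid for all $v_{h_{k+1}}\in V_{h_{k+1}}\supset V_{h_k}$) give, testing with $v_{h_{k+1}}=y_{h_{k+1}}-y_{h_k}$ and $v_{h_{k+1}}=p_{h_{k+1}}-p_{h_k}$, the identity $\|y-y_{h_{k+1}}\|_{a,\Omega}^2=\|y-y_{h_k}\|_{a,\Omega}^2-\|y_{h_{k+1}}-y_{h_k}\|_{a,\Omega}^2$ and its analogue for $p$; adding the two yields the display.

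The second step is the estimator reduction estimate. Applying the scalar estimator reduction result of \cite{Cascon} to $y$ and to $p$ separately on the refinement $\mathcal{T}_{h_{k+1}}$ of $\mathcal{T}_{h_k}$ — using that the elements of $\mathcal{M}_{h_k}$ are bisected $b$ times (whence their diameters shrink by a fixed factor) together with the local Lipschitz dependence of the residual indicators on their discrete argument — and summing, one obtains, for every $\delta\in(0,1)$,
\[
E_{k+1}^2\le(1+\delta)\bigl(E_k^2-\lambda\,E_k(\mathcal{M})^2\bigr)+(1+\delta^{-1})C_*^2\,\|(y_{h_{k+1}}-y_{h_k},\,p_{h_{k+1}}-p_{h_k})\|_a^2,
\]
with $\lambda\in(0,1)$ depending only on $b$ and $d$ (for newest-vertex bisection $\lambda=1-2^{-b/d}$) and $C_*$ exactly the oscillation constant of Lemma \ref{Lm:2.4}, which enters because the element and jump residuals of $y_{h_{k+1}}-y_{h_k}$ (resp. $p_{h_{k+1}}-p_{h_k}$) are controlled by $\mathrm{osc}\bigl(L(y_{h_{k+1}}-y_{h_k}),\mathcal{T}_{h_{k+1}}\bigr)$ (resp. $\mathrm{osc}\bigl(L^*(p_{h_{k+1}}-p_{h_k}),\mathcal{T}_{h_{k+1}}\bigr)$) together with inverse inequalities, just as in the scalar case.

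The third step is to combine and calibrate the constants. Multiply the estimator reduction estimate by $\tilde\gamma>0$, add the Pythagoras identity, and observe that the coefficient of $\|(y_{h_{k+1}}-y_{h_k},p_{h_{k+1}}-p_{h_k})\|_a^2$ is $\tilde\gamma(1+\delta^{-1})C_*^2-1$, which vanishes precisely for the choice $\tilde\gamma=\frac{1}{(1+\delta^{-1})C_*^2}$ of (\ref{tilde_gamma}). Dropping that term and invoking the D\"{o}rfler property $E_k(\mathcal{M})^2\ge\theta E_k^2$ from Algorithm \ref{Alg:1.0} gives $e_{k+1}^2+\tilde\gamma E_{k+1}^2\le e_k^2+\tilde\gamma(1+\delta)(1-\lambda\theta)E_k^2$. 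To convert the trivial coefficient $1$ in front of $e_k^2$ into a genuine decay factor, split the D\"{o}rfler gain with a parameter $\beta\in(0,1)$ and estimate $\beta\tilde\gamma(1+\delta)\lambda\theta\,E_k^2$ from below by $\frac{\beta\tilde\gamma(1+\delta)\lambda\theta}{\tilde C_1}\,e_k^2$ using the global upper bound $e_k^2\le\tilde C_1E_k^2$ (the sum of (\ref{elliptic_upper}) and (\ref{elliptic_upper_adjoint})); this yields
\[
e_{k+1}^2+\tilde\gamma E_{k+1}^2\le\Bigl(1-\tfrac{\beta\tilde\gamma(1+\delta)\lambda\theta}{\tilde C_1}\Bigr)e_k^2+\tilde\gamma(1+\delta)\bigl(1-(1-\beta)\lambda\theta\bigr)E_k^2 .
\]
Now fix $\beta\in(0,1)$, then pick $\delta\in(0,1)$ small enough that $(1+\delta)\bigl(1-(1-\beta)\lambda\theta\bigr)<1$ (which also fixes $\tilde\gamma$), and set $\tilde\beta^2$ to be the maximum of the two coefficients on the right-hand side above; both lie in $(0,1)$, which is (\ref{elliptic_convergence}), with $\tilde\gamma,\tilde\beta$ depending only on the shape regularity, the data $A,c$ (through $C_*$ and $\tilde C_1$) and the parameters $\theta,b,\delta,\beta$.

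The only step I expect to require genuine checking rather than routine bookkeeping is the estimator reduction estimate of the second step: one must confirm that the scalar argument of \cite{Cascon} — Lipschitz dependence of the indicators on their discrete argument, the mesh-size gain on refined elements, and the entry of $C_*$ via Lemma \ref{Lm:2.4} applied to both $L$ and $L^*$ — carries over unchanged to the present jump residuals $[A\nabla\cdot]_E\cdot n_E$ (here harmless since $A=A^*$), and that adding the two scalar estimates is compatible with the \emph{combined} marking. The last point is immediate: since Algorithm \ref{Alg:1.0} marks on the sum $\tilde\eta_h^2(y_h,T)+\tilde\eta_h^2(p_h,T)$, the bound $E_k(\mathcal{M})^2\ge\theta E_k^2$ holds directly for the sum, with no need to control the state and adjoint contributions on $\mathcal{M}_{h_k}$ separately — which is precisely why the pair can be handled ``by some primary operations'' as the text claims.
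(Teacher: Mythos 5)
Your argument is correct and is exactly the intended one: the paper gives no proof of this theorem, deferring to the contraction proof of Casc\'on--Kreuzer--Nochetto--Siebert (and its use in Dai--Xu--Zhou), extended ``by some primary operations'' to the pair $(y,p)$, which is precisely what you reconstruct (Pythagoras via Galerkin orthogonality for the symmetric form, scalar estimator reduction applied to each component and summed with the constant $C_*$ fixing $\tilde\gamma$ as in (\ref{tilde_gamma}), the combined D\"orfler marking applied to the summed indicators, and the global upper bound to produce the decay factor). Your closing observation that the summed marking makes the componentwise reduction compatible with a single marked set is the key point that justifies the paper's claim.
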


To prove the optimal complexity of the adaptive algorithm we need further results. The following lemma presents a localised upper bound estimate for the distance between two nested solutions of the elliptic boundary value problems (\ref{elliptic_weak}) and (\ref{elliptic_weak_adjoint}) (see \cite[Lemma 3.6]{Cascon} and \cite[Lemma 6.2]{Dai}).
\begin{Lemma}\label{Lm:distance}
Let $(y_{h_k},p_{h_k})\in V_{h_k}\times V_{h_k}$ and $(y_{h_{k+1}},p_{h_{k+1}})\in V_{h_{k+1}}\times V_{h_{k+1}}$ be the discrete solutions of problems (\ref{elliptic_weak}) and (\ref{elliptic_weak_adjoint}) over a mesh $\mathcal{T}_{h_k}$ and its refinement $\mathcal{T}_{h_{k+1}}$ with marked element $\mathcal{M}_{h_{k}}\subset \mathcal{T}_{h_k}$. Let $R_{\mathcal{T}_{h_k}\rightarrow \mathcal{T}_{h_{k+1}}}$ be the set of refined elements. Then the following localised upper bound is valid
\begin{eqnarray}
\|(y_{h_k}-y_{h_{k+1}},p_{h_k}-p_{h_{k+1}})\|_{a}^2\leq\tilde C_1\sum\limits_{T\in R_{\mathcal{T}_{h_k}\rightarrow \mathcal{T}_{h_{k+1}}}}\big(\tilde \eta_{h_k}^2(y_{h_k},T)+\tilde \eta_{h_k}^2(p_{h_k},T)\big).\label{distance}
\end{eqnarray}
\end{Lemma}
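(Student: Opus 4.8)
The plan is to reduce the system estimate to the known scalar result from \cite[Lemma 3.6]{Cascon} applied separately to the state equation (\ref{elliptic_weak}) and to the adjoint equation (\ref{elliptic_weak_adjoint}), then add the two inequalities. First I would recall that by definition $\|(y_{h_k}-y_{h_{k+1}},p_{h_k}-p_{h_{k+1}})\|_a^2 = \|y_{h_k}-y_{h_{k+1}}\|_{a,\Omega}^2 + \|p_{h_k}-p_{h_{k+1}}\|_{a,\Omega}^2$, so it suffices to bound each summand. For the state part, $y_{h_k}=S_{h_k}f$ and $y_{h_{k+1}}=S_{h_{k+1}}f$ are the Galerkin approximations of the same source problem on the nested spaces $V_{h_k}\subset V_{h_{k+1}}$; by Galerkin orthogonality the difference $e_h := y_{h_{k+1}}-y_{h_k}\in V_{h_{k+1}}$ satisfies $a(e_h,v_h)=(f,v_h)-a(y_{h_k},v_h)$ for all $v_h\in V_{h_{k+1}}$, i.e. the residual of $y_{h_k}$ tested against $V_{h_{k+1}}$.

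The key step is the localisation: taking $v_h=e_h$ and integrating by parts element-wise over $\mathcal{T}_{h_k}$, the residual functional $v_h\mapsto (f,v_h)-a(y_{h_k},v_h)$ is expressed through the element residuals $\tilde r_T(y_{h_k})$ and jump residuals $\tilde j_E(y_{h_k})$ defined in (\ref{element_res})--(\ref{jump}). Since $e_h$ vanishes (in the appropriate interpolation sense) on the subdomain where $\mathcal{T}_{h_k}$ and $\mathcal{T}_{h_{k+1}}$ coincide — this is where the nestedness and the fact that unrefined elements carry no new degrees of freedom enters — one uses a Scott--Zhang type quasi-interpolant $I_{h_k}e_h\in V_{h_k}$ with $a(e_h,I_{h_k}e_h)$ controlled by Galerkin orthogonality in $V_{h_k}$, leaving only contributions from elements in $R_{\mathcal{T}_{h_k}\rightarrow\mathcal{T}_{h_{k+1}}}$ and their neighbours. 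Standard scaled trace and inverse estimates then give $\|y_{h_k}-y_{h_{k+1}}\|_{a,\Omega}^2 \le \tilde C_1 \sum_{T\in R_{\mathcal{T}_{h_k}\rightarrow\mathcal{T}_{h_{k+1}}}}\tilde\eta_{h_k}^2(y_{h_k},T)$, with exactly the same constant $\tilde C_1$ as in the global upper bound (\ref{elliptic_upper}) since the argument is the classical one restricted to the refined patch. The identical argument applied to $L^*$, the source $g$, and the residuals $\tilde r_T(p_{h_k})$, $\tilde j_E(p_{h_k})$ yields $\|p_{h_k}-p_{h_{k+1}}\|_{a,\Omega}^2 \le \tilde C_1 \sum_{T\in R_{\mathcal{T}_{h_k}\rightarrow\mathcal{T}_{h_{k+1}}}}\tilde\eta_{h_k}^2(p_{h_k},T)$.

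Adding the two inequalities gives (\ref{distance}) directly. The main obstacle is purely bookkeeping: making precise that the quasi-interpolation error localises to $R_{\mathcal{T}_{h_k}\rightarrow\mathcal{T}_{h_{k+1}}}$ rather than to all of $\Omega$ — i.e. that on every element $T\in\mathcal{T}_{h_k}$ that survives into $\mathcal{T}_{h_{k+1}}$ the local contribution is annihilated by testing against $V_{h_k}$. Since this is exactly \cite[Lemma 3.6]{Cascon} for a single equation, and the state and adjoint systems decouple completely at the level of the discrete problems (no coupling term appears in (\ref{elliptic_weak}) or (\ref{elliptic_weak_adjoint})), the extension to the pair $(y,p)$ is immediate and requires no new ideas beyond summing; one only needs to note that the marking in Algorithm \ref{Alg:1.0} guarantees $\mathcal{M}_{h_k}\subset R_{\mathcal{T}_{h_k}\rightarrow\mathcal{T}_{h_{k+1}}}$, which is used downstream rather than here.
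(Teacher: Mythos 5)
Your proposal is correct and follows essentially the same route as the paper, which gives no independent proof but invokes the scalar localised upper bound of \cite[Lemma 3.6]{Cascon} (see also \cite[Lemma 6.2]{Dai}) and obtains the system version exactly as you do: apply it separately to the decoupled state and adjoint source problems and add, since $\|(\cdot,\cdot)\|_a^2$ is just the sum of the two energy norms. Your sketch of the underlying scalar argument (Galerkin orthogonality for $e_h=y_{h_{k+1}}-y_{h_k}$ plus a quasi-interpolant killing the contribution of unrefined elements) is the standard one and is consistent with the cited references.
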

Consequently, we can show the optimality of the D\"{o}rfler's marking strategy in the following 
lemma (see \cite[Lemma 5.9]{Cascon} and \cite[Proposition 6.3]{Dai} for the proof).
\begin{Lemma}\label{Lm:decay}
Let $(y_{h_k},p_{h_k})\in V_{h_k}\times V_{h_k}$ and $(y_{h_{k+1}},p_{h_{k+1}})\in V_{h_{k+1}}\times V_{h_{k+1}}$ be the discrete solutions of problems (\ref{elliptic_weak}) and (\ref{elliptic_weak_adjoint}) over a mesh $\mathcal{T}_{h_k}$ and its refinement $\mathcal{T}_{h_{k+1}}$ with marked element $\mathcal{M}_{h_{k}}\subset \mathcal{T}_{h_k}$. Suppose that they satisfy the energy decrease property
\begin{eqnarray}
&&\|(y-y_{h_{k+1}},p-p_{h_{k+1}})\|_{a}^2+\tilde \gamma_0\big(\mbox{osc}^2(f-Ly_{h_{k+1}},\mathcal{T}_{h_{k+1}})+\mbox{osc}^2(g-L^*p_{h_{k+1}},\mathcal{T}_{h_{k+1}})\big)\nonumber\\
&\leq&\tilde\beta_0^2\Big(\|(y-y_{h_{k}},p-p_{h_{k}})\|_{a}^2+\tilde \gamma_0\big(\mbox{osc}^2(f-Ly_{h_k},\mathcal{T}_{h_k})+\mbox{osc}^2(g-L^*p_{h_k},\mathcal{T}_{h_k})\big)\Big)\label{bvp_decay}
\end{eqnarray}
with $\tilde \gamma_0>0$ a constant and $\tilde\beta_0^2\in (0,{1\over 2})$. Then the set $R_{\mathcal{T}_{h_k}\rightarrow \mathcal{T}_{h_{k+1}}}$ of marked elements satisfies the D\"{o}rfler property
\begin{eqnarray}
\sum\limits_{T\in R_{\mathcal{T}_{h_k}\rightarrow \mathcal{T}_{h_{k+1}}}}\big(\tilde\eta_{h_k}^2(y_{h_k},T)+\tilde\eta_{h_k}^2(p_{h_k},T)\big)\geq\tilde\theta \sum\limits_{T\in\mathcal{T}_{h_k}}\big(\tilde\eta_{h_k}^2(y_{h_k},T)+\tilde\eta_{h_k}^2(p_{h_k},T)\big)\label{BVP_Dorfler}
\end{eqnarray}
with $\tilde\theta=\frac{\tilde C_2(1-2\tilde\beta_0^2)}{\tilde C_0(\tilde C_1+(1+2C_*^2\tilde C_1)\tilde\gamma_0)}$, where $\tilde C_0=\max(1,\frac{\tilde C_3}{\tilde\gamma_0})$.
\end{Lemma}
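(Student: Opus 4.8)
The plan is to derive the D\"orfler property (\ref{BVP_Dorfler}) from the assumed energy decrease (\ref{bvp_decay}) by the standard circle of ideas of \cite{Cascon}, adapted here to the state/adjoint pair $(y,p)$: Galerkin orthogonality on the nested spaces $V_{h_k}\subset V_{h_{k+1}}$, the hypothesis (\ref{bvp_decay}) itself, the localised upper bound of Lemma \ref{Lm:distance}, and the global lower bounds (\ref{elliptic_lower})--(\ref{elliptic_lower_adjoint}). To keep the bookkeeping light, set $o_k^2:=\mbox{osc}^2(f-Ly_{h_k},\mathcal{T}_{h_k})+\mbox{osc}^2(g-L^*p_{h_k},\mathcal{T}_{h_k})$ and $D_k:=\|(y-y_{h_k},p-p_{h_k})\|_a^2+\tilde\gamma_0 o_k^2$, so that (\ref{bvp_decay}) reads $D_{k+1}\le\tilde\beta_0^2 D_k$; write $\delta_k^2:=\|(y_{h_k}-y_{h_{k+1}},p_{h_k}-p_{h_{k+1}})\|_a^2$ and let $S_k$ denote the sum of $\tilde\eta_{h_k}^2(y_{h_k},T)+\tilde\eta_{h_k}^2(p_{h_k},T)$ over the refined elements $T\in R_{\mathcal{T}_{h_k}\to\mathcal{T}_{h_{k+1}}}$. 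The target is $S_k\ge\tilde\theta\bigl(\tilde\eta_{h_k}^2(y_{h_k},\Omega)+\tilde\eta_{h_k}^2(p_{h_k},\Omega)\bigr)$.

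I would start from the Pythagoras identity $\|(y-y_{h_k},p-p_{h_k})\|_a^2=\|(y-y_{h_{k+1}},p-p_{h_{k+1}})\|_a^2+\delta_k^2$, which holds because $a(\cdot,\cdot)$ is symmetric and $y_{h_k}-y_{h_{k+1}},\ p_{h_k}-p_{h_{k+1}}\in V_{h_{k+1}}$ are $a$-orthogonal to $y-y_{h_{k+1}}$ and $p-p_{h_{k+1}}$. Feeding it into (\ref{bvp_decay}) and keeping the nonnegative term $\tilde\gamma_0 o_{k+1}^2$ gives $D_k\le\delta_k^2-\tilde\gamma_0 o_{k+1}^2+\tilde\beta_0^2 D_k+\tilde\gamma_0 o_k^2$. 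The next step, which is the only place the pair $(y,p)$ needs more than the scalar argument, is an oscillation perturbation. Splitting $\mathcal{T}_{h_k}$ into the refined set $R_{\mathcal{T}_{h_k}\to\mathcal{T}_{h_{k+1}}}$ and the unrefined set $\mathcal{T}_{h_k}\cap\mathcal{T}_{h_{k+1}}$: on the refined elements I use $\mbox{osc}^2(f-Ly_{h_k},T)\le\tilde\eta_{h_k}^2(y_{h_k},T)$ (and the adjoint analogue, the element residual being part of the indicator), so their contribution to $o_k^2$ is at most $S_k$; on each unrefined element $T$, which retains its diameter and on which $y_{h_{k+1}}-y_{h_k}$ and $p_{h_{k+1}}-p_{h_k}$ restrict to polynomials lying in $V_{h_{k+1}}$, I write $f-Ly_{h_k}=(f-Ly_{h_{k+1}})+L(y_{h_{k+1}}-y_{h_k})$, invoke the subadditivity (\ref{osc_linear}), Lemma \ref{Lm:2.4} applied to $y_{h_{k+1}}-y_{h_k}\in V_{h_{k+1}}$ (and its $L^*,\,p$ counterpart), and $(a+b)^2\le2a^2+2b^2$, to reach $o_k^2\le S_k+2o_{k+1}^2+2C_*^2\delta_k^2$. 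Inserting this into the previous inequality and absorbing the level-$(k+1)$ oscillation through $\tilde\gamma_0 o_{k+1}^2\le D_{k+1}\le\tilde\beta_0^2 D_k$ (the decrease hypothesis used a second time), the level-$(k+1)$ terms collect to $2\tilde\beta_0^2 D_k$ and one is left with
\[
(1-2\tilde\beta_0^2)\,D_k\ \le\ (1+2C_*^2\tilde\gamma_0)\,\delta_k^2+\tilde\gamma_0\,S_k ,
\]
whose left-hand side is strictly positive since $\tilde\beta_0^2<1/2$.

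To finish, I would bound the first right-hand term by the localised upper bound $\delta_k^2\le\tilde C_1 S_k$ of Lemma \ref{Lm:distance}, getting $(1-2\tilde\beta_0^2)D_k\le\bigl(\tilde C_1+\tilde\gamma_0+2C_*^2\tilde C_1\tilde\gamma_0\bigr)S_k$; and on the left I would sum the global lower bounds (\ref{elliptic_lower}) and (\ref{elliptic_lower_adjoint}) and use $\tilde C_0=\max(1,\tilde C_3/\tilde\gamma_0)$ to get $\tilde C_2\bigl(\tilde\eta_{h_k}^2(y_{h_k},\Omega)+\tilde\eta_{h_k}^2(p_{h_k},\Omega)\bigr)\le\|(y-y_{h_k},p-p_{h_k})\|_a^2+\tilde C_3 o_k^2\le\tilde C_0 D_k$. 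Combining these, and recalling $\tilde\eta_{h_k}^2(y_{h_k},\Omega)+\tilde\eta_{h_k}^2(p_{h_k},\Omega)=\sum_{T\in\mathcal{T}_{h_k}}\bigl(\tilde\eta_{h_k}^2(y_{h_k},T)+\tilde\eta_{h_k}^2(p_{h_k},T)\bigr)$, yields (\ref{BVP_Dorfler}) with exactly $\tilde\theta=\tilde C_2(1-2\tilde\beta_0^2)/\bigl(\tilde C_0(\tilde C_1+(1+2C_*^2\tilde C_1)\tilde\gamma_0)\bigr)$.

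The step I expect to be the main obstacle is the oscillation perturbation on the unrefined elements: one has to confirm that the increments $y_{h_{k+1}}-y_{h_k}$ and $p_{h_{k+1}}-p_{h_k}$ really are members of $V_{h_{k+1}}$ so that Lemma \ref{Lm:2.4} is applicable (this is exactly what brings $C_*$ into $\tilde\theta$), get rid of the level-$(k+1)$ oscillation that the perturbation creates without circular reasoning, and carry the constants through sharply enough to match the stated value of $\tilde\theta$. Everything else reduces to Galerkin orthogonality together with the estimates already recorded in Section 2.
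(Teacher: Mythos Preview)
Your proposal is correct and follows essentially the same approach as the references \cite[Lemma 5.9]{Cascon} and \cite[Proposition 6.3]{Dai} to which the paper defers the proof of this lemma; indeed, the paper's own detailed proof of the analogous Corollary \ref{Cor:OCP_decay} is organised in precisely the same way (global lower bound, Galerkin orthogonality, oscillation perturbation on refined versus unrefined elements via Lemma \ref{Lm:2.4}, and the localised upper bound of Lemma \ref{Lm:distance}), with the same constants emerging. The concern you flag about $y_{h_{k+1}}-y_{h_k}\in V_{h_{k+1}}$ is handled exactly as you suggest by nestedness $V_{h_k}\subset V_{h_{k+1}}$.
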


\section{Adaptive finite element method for optimal control problem}
\setcounter{equation}{0}
In this section we consider the following elliptic optimal control problem:
\begin{eqnarray}
\min\limits_{u\in U_{ad}}\ \ J(y,u)={1\over
2}\|y-y_d\|_{0,\Omega}^2 +
\frac{\alpha}{2}\|u\|_{0,\Omega}^2\label{OCP}
\end{eqnarray}
subject to
\begin{equation}\label{OCP_state}
\left\{\begin{array}{llr}
Ly=u \quad&\mbox{in}\
\Omega, \\
 \ y=0  \quad &\mbox{on}\ \partial\Omega,
\end{array}\right.
\end{equation}
where $\alpha>0$ is a fixed parameter, $U_{ad}$ is the admissible control set with bilateral control constraints:
\begin{eqnarray}
U_{ad}:= \Big\{u\in L^2(\Omega), \quad a\leq u\leq b\ \mbox{a.e.}\ \mbox{in}\  \Omega\Big\},\nonumber
\end{eqnarray}
where $a,b\in \mathbb{R}$ and $a<b$. 
\begin{Remark}\label{control_operator}
We remark that all the theories presented below can be generalised to the case that the control acts on a subdomain $\omega\subset\Omega$. In this case the governing equation reads $Ly=Bu$ with the control operator $B:L^2(\omega)\rightarrow L^2(\Omega)$ an extension by zero operator from $\omega$ to $\Omega$.
\end{Remark}
With the solution operator $S$ of elliptic equation (\ref{OCP_state}) introduced in last section, we can formulate a reduced optimization problem
\begin{eqnarray}
\min\limits_{u\in U_{ad}}\ \ \hat J(u):=J(Su,u)={1\over
2}\|Su-y_d\|_{0,\Omega}^2 +
\frac{\alpha}{2}\|u\|_{0,\Omega}^2.\label{OCP_reduced}
\end{eqnarray}
Since the above optimization problem is linear and strictly convex, there exists a unique solution
 $u\in U_{ad}$ by standard argument (see \cite{Lions}). Moreover, the first order necessary and sufficient optimality condition can be stated as follows:
\begin{equation}\label{reduced_opt}
\hat J'(u)(v-u)=(\alpha u+S^*(Su-y_d),v-u)\geq 0, \
\ \ \ \ \forall v\in U_{ad},
\end{equation}
where $S^*$ is the adjoint of $S$ (\cite{Hinze09book}). Introducing the adjoint state $p:=S^*(Su-y_d)\in H_0^1(\Omega)$, we are led to the following optimality system:
\begin{equation}\label{OCP_OPT}
\left\{\begin{array}{llr}a(y,v)=(u,v),\ \ &\forall v\in H_0^1(\Omega),\\
a(w,p)=(y-y_d,w),\ \ &\forall w\in H_0^1(\Omega),\\
(\alpha u+p,v-u)\geq 0, \ \ &\forall v\in U_{ad}.
\end{array} \right.
\end{equation}
Hereafter, we call $u$, $y$ and $p$ the optimal control, state and adjoint state, respectively. From the last inequality of (\ref{OCP_OPT}) we have the pointwise representation of $u$ (see \cite{Lions}):
\begin{eqnarray}
u(x)=P_{[a,b]}\Big\{-\frac{1}{\alpha}p(x)\Big\},\label{projection}
\end{eqnarray}
where $P_{[a,b]}$ is the orthogonal projection operator from $L^2(\Omega)$ to
$U_{ad}$.

Next, let us consider the finite element approximation of
(\ref{OCP})-(\ref{OCP_state}). In this paper, we use the piecewise linear finite elements
 to approximate the state $y$, and variational discretization for the optimal
 control $u$ (see \cite{Hinze05COAP}). Based on the finite element space $V_h$,
we can define the finite dimensional approximation to the optimal
control problem (\ref{OCP})-(\ref{OCP_state}) as follows: Find
$(u_h,y_h)\in U_{ad}\times V_h$ such that
\begin{eqnarray}\label{OCP_h}
\min\limits_{u_h\in U_{ad}}J_h( y_h,u_h)={1\over
2}\|y_h-y_d\|_{0,\Omega}^2 +
\frac{\alpha}{2}\|u_h\|_{0,\Omega}^2
\end{eqnarray}
subject to
\begin{eqnarray}\label{OCP_state_h}
a(y_h,v_h)=(f+ u_h,v_h),\ \ \ \ \forall v_h\in V_h.
\end{eqnarray}
Similar to the continuous case we have $y_h=S_hu_h$. With this notation we can formulate a reduced discrete optimization problem
\begin{eqnarray}\label{OCP_Operator_h}
\min\limits_{u_h\in U_{ad}} \hat J_h(u_h):=J_h(S_hu_h,u_h)={1\over
2}\|S_hu_h-y_d\|_{0,\Omega}^2 +
\frac{\alpha}{2}\|u_h\|_{0,\Omega}^2.
\end{eqnarray}
We note that the above optimization problem can be solved by projected gradient method
or semi-smooth Newton method, see \cite{Hintermueller03SIOPT}, \cite{Hinze09book} and \cite{LiuYan08book} for more details.

Similar to the continuous problem (\ref{OCP})-(\ref{OCP_state}),
the above discretized optimization problem also admits a unique
solution ${u}_h\in U_{ad}$. Moreover, the first order necessary and sufficient
 optimality condition can be stated as follows:
\begin{eqnarray}\label{reduced_opt_h}
\hat J'_h( u_h)(v_h- u_h)=(\alpha u_h+S^*_h(S_h u_h-y_d),v_h-u_h)\geq 0, \ \forall v_h\in U_{ad},
\end{eqnarray}
where $S^*_h$ is the adjoint of $S_h$. Introducing the adjoint state $p_h:=S^*_h(S_hu_h-y_d)\in V_h$, the discretized first order necessary and sufficient optimality
condition is equivalent to:
\begin{equation}\label{OCP_OPT_h}
\left\{\begin{array}{llr}a(y_h,v_h)=( u_h,v_h),\ \ &\forall v_h\in V_h,\\
 a(w_h,p_h)=(y_h-y_d,w_h),\ \ &\forall w_h\in V_h,\\
 (\alpha u_h+ p_h,v_h-u_h)\geq 0, \ \ \ &\forall v_h\in U_{ad}.
\end{array} \right.
\end{equation}
Hereafter, we call $u_h$, $y_h$ and $p_h$ the discrete optimal control, state and adjoint state, respectively. Similar to the continuous case (\ref{projection}) we have
\begin{eqnarray}
u_h(x)=P_{U_{ad}}\Big\{-\frac{1}{\alpha}p_h(x)\Big\}.\label{ph_to_uh}
\end{eqnarray}
It should be noticed that $u_h$ is not generally a finite element function in $V_h$.

For convenience we define $y^h:=Su_h$ and $p^h:=S^*(S_hu_h-y_d)$. It is obvious that $y_h$ and $p_h$ are the standard Galerkin projections of $y^h$ and $p^h$, i.e., $y_h=\mathcal{R}_hy^h$ and $p_h=\mathcal{R}_hp^h$. The following equivalence property is established in \cite{Kohls}.
\begin{Theorem}\label{Thm:2.1}
Let $(u,y,p)\in U_{ad}\times H_0^1(\Omega)\times H_0^1(\Omega)$ and
 $( u_h, y_h, p_h)\in U_{ad}\times V_h\times V_h$ be the
 solutions of problems (\ref{OCP})-(\ref{OCP_state}) and
 (\ref{OCP_h})-(\ref{OCP_state_h}), respectively.
 Then the following an equivalence property holds:
\begin{eqnarray}\label{u_equivalence}
\|u-u_h\|_{0,\Omega}+\|y-y_h\|_{a,\Omega}
+\|p- p_h\|_{a,\Omega}\approx \|y^h- y_h\|_{a,\Omega}
+\|p^h- p_h\|_{a,\Omega}.
\end{eqnarray}
\end{Theorem}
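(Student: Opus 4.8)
The plan is to prove the two one‑sided estimates behind $\approx$ separately; bounding the right‑hand side by the left‑hand side is elementary, while bounding the left‑hand side by the right‑hand side is the substantial part. Throughout I would use that the solution operators $S,S^*\colon L^2(\Omega)\to H^1_0(\Omega)$ are bounded (test the weak forms with the solutions themselves), that $\|v\|_{0,\Omega}\lesssim\|v\|_{a,\Omega}$ on $H^1_0(\Omega)$ (Poincar\'e together with the norm equivalence $\|\cdot\|_{a,\Omega}\approx\|\cdot\|_{1,\Omega}$), and the identities $y^h-y=S(u_h-u)$ and $p^h-p=S^*(S_hu_h-Su)=S^*(y_h-y)$ which follow directly from the definitions $y^h=Su_h$, $p^h=S^*(S_hu_h-y_d)$.

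For the easy direction I would insert $y$ and $p$ as intermediate terms: $\|y^h-y_h\|_{a,\Omega}\le\|y^h-y\|_{a,\Omega}+\|y-y_h\|_{a,\Omega}\lesssim\|u-u_h\|_{0,\Omega}+\|y-y_h\|_{a,\Omega}$, and likewise $\|p^h-p_h\|_{a,\Omega}\le\|p^h-p\|_{a,\Omega}+\|p-p_h\|_{a,\Omega}\lesssim\|y-y_h\|_{0,\Omega}+\|p-p_h\|_{a,\Omega}\lesssim\|y-y_h\|_{a,\Omega}+\|p-p_h\|_{a,\Omega}$; summing these gives the claim.

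The harder direction starts from the control error. Testing the continuous variational inequality in (\ref{OCP_OPT}) with $v=u_h\in U_{ad}$ and the discrete one in (\ref{OCP_OPT_h}) with $v_h=u\in U_{ad}$ and adding the two gives
\[
\alpha\|u-u_h\|_{0,\Omega}^2\le(p-p_h,\,u_h-u).
\]
Now split $p-p_h=(p-p^h)+(p^h-p_h)$. The term $(p^h-p_h,\,u_h-u)\le\|p^h-p_h\|_{0,\Omega}\|u-u_h\|_{0,\Omega}$ is, after Poincar\'e and Young's inequality, absorbed as $\tfrac{\alpha}{2}\|u-u_h\|_{0,\Omega}^2$ plus a multiple of $\|p^h-p_h\|_{a,\Omega}^2$. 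The remaining term is where the cancellation lives: with $p-p^h=S^*(y-y_h)$ and the adjoint identity $(S^*w,v)=(w,Sv)$ one obtains $(p-p^h,\,u_h-u)=(y-y_h,\,S(u_h-u))=(y-y_h,\,y^h-y)$, and since $y^h-y=(y^h-y_h)-(y-y_h)$ this equals $(y-y_h,\,y^h-y_h)-\|y-y_h\|_{0,\Omega}^2\le\tfrac12\|y^h-y_h\|_{0,\Omega}^2\lesssim\|y^h-y_h\|_{a,\Omega}^2$ by Young's inequality. Hiding the $\tfrac{\alpha}{2}\|u-u_h\|_{0,\Omega}^2$ term produces $\|u-u_h\|_{0,\Omega}\lesssim\|y^h-y_h\|_{a,\Omega}+\|p^h-p_h\|_{a,\Omega}$, after which $\|y-y_h\|_{a,\Omega}\le\|y-y^h\|_{a,\Omega}+\|y^h-y_h\|_{a,\Omega}\lesssim\|u-u_h\|_{0,\Omega}+\|y^h-y_h\|_{a,\Omega}$ and, similarly, $\|p-p_h\|_{a,\Omega}\le\|p-p^h\|_{a,\Omega}+\|p^h-p_h\|_{a,\Omega}\lesssim\|y-y_h\|_{a,\Omega}+\|p^h-p_h\|_{a,\Omega}$ close the estimate.

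The only delicate point I anticipate is the algebra in the middle term $(p-p^h,\,u_h-u)$: one must route through $S^*$ and $S$ in exactly this order so that the term $-\|y-y_h\|_{0,\Omega}^2$ appears with the right (negative) sign and can simply be discarded; everything else is triangle inequalities, operator boundedness, and Young's inequality with the parameter tuned to $\alpha$.
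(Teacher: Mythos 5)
Your proposal is correct and follows essentially the same route as the paper: you add the continuous and discrete variational inequalities tested with $u_h$ and $u$, split $p-p_h$ through the intermediate quantity $p^h=S^*(S_hu_h-y_d)$, use the adjoint identity to rewrite the middle term as $(y-y_h,\,y^h-y)$ so the term $-\|y-y_h\|_{0,\Omega}^2$ appears with a favorable sign, and absorb via Young's inequality, exactly as in the paper's derivation of (\ref{u_rep})--(\ref{u_error}). The converse direction by triangle inequalities through $y^h$ and $p^h$ also coincides with the paper's argument.
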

\begin{proof}
For completeness we include a brief proof. Setting $v= u_h$ in (\ref{reduced_opt}) and $v_h = u$ in (\ref{reduced_opt_h}) we are led to
\begin{eqnarray}
(\alpha u+S^*(Su-y_d),u_h-u)\geq 0, \label{u_to_p1}\\
(\alpha u_h+S^*_h(S_hu_h-y_d),u-u_h)\geq 0. \label{u_to_p2}
\end{eqnarray}
Adding the above two inequalities, we conclude from (\ref{OCP_OPT}) and (\ref{OCP_OPT_h}) that
\begin{eqnarray}
&&\alpha\|u- u_h\|^2_{0,\Omega}\leq (S^*_h(S_hu_h-y_d)-S^*(Su-y_d),u- u_h)\nonumber\\
&=& (S^*_h(S_h u_h-y_d)-S^*(S_hu_h-y_d),u- u_h)+(S^*(S_hu_h-y_d)-S^*(Su-y_d),u-u_h)\nonumber\\
&=&(S^*_h(S_hu_h-y_d)-S^*(S_hu_h-y_d),u-u_h)+(S_hu_h-Su,Su-Su_h)\nonumber\\
&=&(S^*_h(S_h u_h-y_d)-S^*(S_hu_h-y_d),u-u_h)+(S_hu_h-Su,Su-S_hu_h)\nonumber\\
&&+(S_h u_h-Su,S_h u_h-Su_h).\label{u_rep}
\end{eqnarray}
It follows from the $\varepsilon$-Young inequality that
\begin{eqnarray}
\alpha\|u-u_h\|^2_{0,\Omega}\leq  C\|Su_h-S_h u_h\|^2_{a,\Omega}+C\|S^*(S_hu_h-y_d)-S^*_h(S_hu_h-y_d)\|_{a,\Omega}^2.\label{u_error}
\end{eqnarray}
Moreover, we have
\begin{eqnarray}
\|y-y_h\|_{a,\Omega}&\leq& \|y-Su_h\|_{a,\Omega}+\|Su_h-y_h\|_{a,\Omega}\nonumber\\
&\leq& C\|u-u_h\|_{0,\Omega}+\|Su_h-y_h\|_{a,\Omega}\nonumber
\end{eqnarray}
and
\begin{eqnarray}
\|p- p_h\|_{a,\Omega}&\leq& \|p-S^*(S_hu_h-y_d)\|_{a,\Omega}+\|S^*(S_hu_h-y_d)- p_h\|_{a,\Omega}\nonumber\\
&\leq& \|Su-S_hu_h\|_{0,\Omega}+\|S^*(S_hu_h-y_d)-p_h\|_{a,\Omega}\nonumber\\
&\leq& C\|u-u_h\|_{0,\Omega}+\|S^*(S_hu_h-y_d)-p_h\|_{a,\Omega}+\|S u_h-y_h\|_{a,\Omega}.\nonumber
\end{eqnarray}
Combining the above estimates we prove the upper bound. 

Now we prove the lower bound. Note that
\begin{eqnarray}
\|S u_h-S_hu_h\|_{a,\Omega}&\leq&\|Su_h-Su\|_{a,\Omega}+\|Su-S_hu_h\|_{a,\Omega}\nonumber\\
&\leq&C\| u- u_h\|_{0,\Omega}+\| y- y_h\|_{a,\Omega}.\label{y_error}
\end{eqnarray}
Similarly, we can derive that
\begin{eqnarray}
&&\|S^*(S_hu_h-y_d)-S^*_h(S_hu_h-y_d)\|_{a,\Omega}\nonumber\\
&\leq&\|S^*(S_hu_h-y_d)-S^*(Su-y_d)\|_{a,\Omega}+\|S^*(Su-y_d)-S^*_h(S_hu_h-y_d)\|_{a,\Omega}\nonumber\\
&\leq&\|S_hu_h-Su\|_{0,\Omega}+\|p- p_h\|_{a,\Omega}\nonumber\\
&=&\|y-y_h\|_{a,\Omega}+\| p- p_h\|_{a,\Omega}.\label{p_error}
\end{eqnarray}
Thus, we can conclude from the above estimates the lower bound. This completes the proof.
\end{proof}

Next, we will prove a compact equivalence property which shows the certain relationship between the finite element optimal control approximation and the associated finite element boundary value approximation. 
\begin{Theorem}\label{Thm:2.2}
Let $h\in (0,h_0)$, $(u,y,p)\in U_{ad}\times H_0^1(\Omega)\times H_0^1(\Omega)$ and
 $(u_h,y_h,p_h)\in U_{ad}\times V_h\times V_h$ be the
 solutions of problems (\ref{OCP})-(\ref{OCP_state}) and
 (\ref{OCP_h})-(\ref{OCP_state_h}), respectively.
 Then the following equivalence properties hold
\begin{eqnarray}
\|y-y_h\|_{a,\Omega}&=& \|y^h-y_h\|_{a,\Omega}+O(\kappa(h))\Big(\|y-y_h\|_{a,\Omega}
+\|p- p_h\|_{a,\Omega}\Big),\label{equivalence_1}\\
\|p-p_h\|_{a,\Omega}
&=& \|p^h- p_h\|_{a,\Omega} +O(\kappa(h))\Big(\|y-y_h\|_{a,\Omega}
+\|p- p_h\|_{a,\Omega}\Big)\label{equivalence_2}
\end{eqnarray}
provided $h_0\ll 1$.
\end{Theorem}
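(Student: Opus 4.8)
The plan is to decompose the error $y - y_h$ into the "boundary value" piece $y^h - y_h$ plus a remainder, and to show the remainder is of order $\kappa(h)$ times the full optimal control error, which by Theorem \ref{Thm:2.1} is itself comparable to $\|y^h-y_h\|_{a,\Omega}+\|p^h-p_h\|_{a,\Omega}$ and to $\|y-y_h\|_{a,\Omega}+\|p-p_h\|_{a,\Omega}$. First I would write $y - y_h = (y - y^h) + (y^h - y_h)$, where $y^h = S u_h$ and $y_h = \mathcal{R}_h y^h = S_h u_h$. The triangle inequality in $\|\cdot\|_{a,\Omega}$ gives both $\|y-y_h\|_{a,\Omega} \le \|y-y^h\|_{a,\Omega} + \|y^h-y_h\|_{a,\Omega}$ and the reverse-direction bound $\big|\,\|y-y_h\|_{a,\Omega} - \|y^h-y_h\|_{a,\Omega}\,\big| \le \|y - y^h\|_{a,\Omega}$, so it suffices to estimate $\|y - y^h\|_{a,\Omega} = \|Su - Su_h\|_{a,\Omega}$. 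Since $S : L^2(\Omega) \to H_0^1(\Omega)$ is bounded, $\|Su - Su_h\|_{a,\Omega} \lesssim \|u - u_h\|_{0,\Omega}$; this already looks like the wrong order (it is $O(1)\|u-u_h\|$, not $O(\kappa(h))$), so the real work is to gain a factor $\kappa(h)$ on the control error.

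The key is to revisit the control estimate in the proof of Theorem \ref{Thm:2.1}, namely \eqref{u_error}:
\begin{eqnarray}
\alpha\|u-u_h\|^2_{0,\Omega}\leq C\|Su_h-S_h u_h\|^2_{a,\Omega}+C\|S^*(S_hu_h-y_d)-S^*_h(S_hu_h-y_d)\|_{a,\Omega}^2.\nonumber
\end{eqnarray}
The two terms on the right are exactly $\|y^h - y_h\|_{a,\Omega}^2$ and $\|p^h - p_h\|_{a,\Omega}^2$, each of which is a \emph{finite element projection error} for an elliptic boundary value problem with $L^2$ right-hand side ($u_h \in U_{ad} \subset L^2$, and $S_h u_h - y_d \in L^2$). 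Hence by \eqref{proj_error} of the Proposition, $\|y^h - y_h\|_{a,\Omega} \lesssim \kappa(h)\|u_h\|_{0,\Omega} \lesssim \kappa(h)$ and $\|p^h - p_h\|_{a,\Omega} \lesssim \kappa(h)\|S_hu_h - y_d\|_{0,\Omega} \lesssim \kappa(h)$, the implied constants being uniform because $u_h$ and $S_h u_h$ are bounded in $L^2$ uniformly in $h$ (a consequence of $U_{ad}$ being bounded and $S_h$ being uniformly bounded). Therefore $\|u - u_h\|_{0,\Omega} \lesssim \kappa(h)$, and feeding this back, $\|y - y^h\|_{a,\Omega} \lesssim \|u-u_h\|_{0,\Omega} \lesssim \kappa(h)$.

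To get the stated form with the error-dependent right-hand side rather than a bare $O(\kappa(h))$, I would instead keep one factor of $\|u-u_h\|_{0,\Omega}$ symbolic: from \eqref{u_error}, $\alpha\|u-u_h\|_{0,\Omega}^2 \le C\big(\|y^h-y_h\|_{a,\Omega} + \|p^h-p_h\|_{a,\Omega}\big)^2$, and by \eqref{proj_error} applied with unit-normalised data, $\|y^h-y_h\|_{a,\Omega} + \|p^h-p_h\|_{a,\Omega} \lesssim \kappa(h)\big(\|u_h\|_{0,\Omega} + \|S_hu_h-y_d\|_{0,\Omega}\big)$. Now bound $\|u_h\|_{0,\Omega} \le \|u\|_{0,\Omega} + \|u-u_h\|_{0,\Omega}$ and $\|S_hu_h - y_d\|_{0,\Omega} \le \|Su-y_d\|_{0,\Omega} + \|Su - S_hu_h\|_{0,\Omega} \le \|Su-y_d\|_{0,\Omega} + C\|u-u_h\|_{0,\Omega} + \|y-y_h\|_{a,\Omega}$, using $\|Su - S_hu_h\|_{0,\Omega} \le \|Su - Su_h\|_{0,\Omega} + \|Su_h - S_hu_h\|_{0,\Omega} \lesssim \|u-u_h\|_{0,\Omega} + \|y-y_h\|_{a,\Omega}$. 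Thus $\|u-u_h\|_{0,\Omega} \lesssim \kappa(h)\big(1 + \|u-u_h\|_{0,\Omega} + \|y-y_h\|_{a,\Omega}\big)$, and for $h_0 \ll 1$ the $\kappa(h)\|u-u_h\|_{0,\Omega}$ term is absorbed, giving $\|u-u_h\|_{0,\Omega} \lesssim \kappa(h)\big(1 + \|y-y_h\|_{a,\Omega}\big)$; the constant $1$ can be turned into $\|y-y_h\|_{a,\Omega} + \|p-p_h\|_{a,\Omega}$ up to higher-order terms by invoking Theorem \ref{Thm:2.1} once more (or simply noting that these errors do not degenerate to zero for $h$ in the relevant range, and folding the bounded data into the generic constant). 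Finally $\|y-y_h\|_{a,\Omega} = \|y^h-y_h\|_{a,\Omega} + \theta\,\|y-y^h\|_{a,\Omega}$ with $|\theta|\le 1$, and $\|y-y^h\|_{a,\Omega} \lesssim \|u-u_h\|_{0,\Omega} \lesssim \kappa(h)\big(\|y-y_h\|_{a,\Omega}+\|p-p_h\|_{a,\Omega}\big)$, which is \eqref{equivalence_1}. The estimate \eqref{equivalence_2} follows by the same argument applied to $p - p_h = (p - p^h) + (p^h - p_h)$ with $p - p^h = S^*(Su - y_d) - S^*(S_hu_h - y_d) = S^*(Su - S_hu_h)$, so $\|p - p^h\|_{a,\Omega} \lesssim \|Su - S_hu_h\|_{0,\Omega} \lesssim \|u-u_h\|_{0,\Omega} + \|y-y_h\|_{a,\Omega} \lesssim \kappa(h)\big(\|y-y_h\|_{a,\Omega}+\|p-p_h\|_{a,\Omega}\big)$.

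The main obstacle is the bootstrap/absorption step: the natural bound on $\|u-u_h\|_{0,\Omega}$ produced by the projection estimates is not immediately proportional to $\kappa(h)$ times the \emph{errors} — it carries a stray $O(1)$ term from $\|u\|_{0,\Omega}$ and $\|y_d\|_{0,\Omega}$, and also feeds $\|u-u_h\|_{0,\Omega}$ and $\|y-y_h\|_{a,\Omega}$ back into its own right-hand side. Handling this cleanly requires (i) the uniform boundedness of $u_h$ and $S_hu_h$ in $L^2$, (ii) the smallness $\kappa(h)\ll 1$ for $h<h_0$ to absorb the self-referential terms, and (iii) a careful accounting of which $O(1)$ data contributions get swept into the "higher order term" language of the statement versus genuinely controlled by the errors via Theorem \ref{Thm:2.1}. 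Once the control error is pinned down as $O(\kappa(h))$ times the full error, everything else is triangle inequalities and boundedness of $S$, $S^*$.
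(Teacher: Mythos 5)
There is a genuine gap, and it sits exactly where you flagged the ``main obstacle''. Your route to the control estimate goes through (\ref{u_error}) together with the a priori bound (\ref{proj_error}), which yields $\|u-u_h\|_{0,\Omega}\lesssim\kappa(h)\bigl(\|u_h\|_{0,\Omega}+\|S_hu_h-y_d\|_{0,\Omega}\bigr)\lesssim\kappa(h)\bigl(1+\|u-u_h\|_{0,\Omega}+\|y-y_h\|_{a,\Omega}\bigr)$. After absorption you are left with a stray term $\kappa(h)\cdot O(1)$ carrying the data norms $\|u\|_{0,\Omega}$, $\|Su-y_d\|_{0,\Omega}$, and this term cannot be converted into $O(\kappa(h))\bigl(\|y-y_h\|_{a,\Omega}+\|p-p_h\|_{a,\Omega}\bigr)$: the energy errors tend to zero as $h\to 0$ inside $(0,h_0)$, so no $h$-independent constant makes $C_{\mathrm{data}}\lesssim\|y-y_h\|_{a,\Omega}+\|p-p_h\|_{a,\Omega}$, and Theorem \ref{Thm:2.1} only says both sides of that equivalence vanish together, not that they are bounded below. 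Your fallback remark that ``these errors do not degenerate to zero for $h$ in the relevant range'' is precisely what fails; and a remainder of size $\kappa(h)\cdot O(1)$ would also be useless downstream, since Theorems \ref{Thm:2.3}, \ref{Thm:4.1} and \ref{Thm:4.2} absorb the remainder against the error quantities themselves. The paper gains the factor $\kappa(h)$ by a different mechanism: in the variational inequalities it inserts $S_h^*(S_hu-y_d)$ (same continuous control $u$ in both operators), so the term $(S_h(u_h-u),S_h(u-u_h))\le 0$ can be dropped and one gets $\|u-u_h\|_{0,\Omega}\lesssim\|S_h^*(S_hu-y_d)-S^*(Su-y_d)\|_{0,\Omega}$, an $L^2$-norm of a fixed-data finite element error; an Aubin--Nitsche duality argument (with (\ref{proj_error})--(\ref{L2_error})) then gives the factor $\kappa(h)$ multiplied by energy-norm quantities that are controlled by $\|y-y_h\|_{a,\Omega}+\|p-p_h\|_{a,\Omega}+\|u-u_h\|_{0,\Omega}$, and absorption for $h_0\ll 1$ yields the key lift (\ref{lift}). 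Your splitting in (\ref{u_error}) evaluates both operators at $u_h$, which is exactly why no $\kappa(h)$ appears in front of error quantities.

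There is a second, smaller gap in your argument for (\ref{equivalence_2}): you bound $\|p-p^h\|_{a,\Omega}\lesssim\|Su-S_hu_h\|_{0,\Omega}\lesssim\|u-u_h\|_{0,\Omega}+\|y-y_h\|_{a,\Omega}$ and then assert this is $\lesssim\kappa(h)\bigl(\|y-y_h\|_{a,\Omega}+\|p-p_h\|_{a,\Omega}\bigr)$, but the $\|y-y_h\|_{a,\Omega}$ contribution carries no factor $\kappa(h)$, so the last inequality is false as stated. Note that $Su-S_hu_h=y-y_h$, so what is needed is the $L^2$-norm, not the energy norm: the paper proves by a duality argument the estimate (\ref{stab1}), $\|y-y_h\|_{0,\Omega}\lesssim\kappa(h)\|y-y_h\|_{a,\Omega}+\|u-u_h\|_{0,\Omega}$, and only then does $\|p-p^h\|_{a,\Omega}$ acquire the factor $\kappa(h)$ after (\ref{lift}) is applied. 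So both defects have the same cure: you must run the Nitsche-type $L^2$ lifts (for $y-y_h$ and for the adjoint-type error at fixed $u$) rather than substituting the a priori bound (\ref{proj_error}) with data norms or stopping at energy norms.
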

\begin{proof}
It is obvious that 
\begin{eqnarray}\label{split}
y-y_h=y^h- y_h+y-y^h,\quad p-p_h=p^h- p_h+p-p^h.
\end{eqnarray}
Moreover, it follows from the stability results of elliptic equation that
\begin{eqnarray}
\|y-y^h\|_{a,\Omega}\leq C\|u-u_h\|_{0,\Omega},\quad \|p- p^h\|_{a,\Omega}\leq  C\|y-y_h\|_{0,\Omega}.\label{stab}
\end{eqnarray}
In the following we estimate $\|y-y_h\|_{0,\Omega}$. Let $\psi\in H_0^1(\Omega)$ be the solution of the following auxiliary problem
\begin{equation}\label{elliptic_auxi}
\left\{\begin{array}{llr}
L^*\psi= y-y_h\quad&\mbox{in}\ \Omega, \\
 \ \psi=0  \quad &\mbox{on}\ \partial\Omega.
\end{array}\right.
\end{equation}
 Let $\psi_h\in V_h$ be the finite element approximation of $\psi$. Then we can conclude from (\ref{proj_error}) and the standard duality argument (see, e.g., \cite{Ciarlet}) that
\begin{eqnarray}
\|y-y_h\|_{0,\Omega}^2&=&a(y-y_h,\psi)\nonumber\\
&=&a(y-y_h,\psi-\psi_h)+a(y-y_h,\psi_h)\nonumber\\
&=&a(y-y_h,\psi-\psi_h)+(u-u_h,\psi_h-\psi)+(u-u_h,\psi)\nonumber\\
&\leq&C\Big(\kappa(h)\|y-y_h\|_{a,\Omega}+\|u-u_h\|_{0,\Omega}\Big)\|y-y_h\|_{0,\Omega},\nonumber
\end{eqnarray}
which in turn implies 
\begin{eqnarray}
\|y- y_h\|_{0,\Omega}\leq  C\kappa(h)\|y-y_h\|_{a,\Omega}+C\|u-u_h\|_{0,\Omega}.\label{stab1}
\end{eqnarray}
Considering (\ref{stab}) we have
\begin{eqnarray}
\|p- p^h\|_{a,\Omega}\leq C\kappa(h)\|y-y_h\|_{a,\Omega}+C\|u-u_h\|_{0,\Omega}.\label{stab_p}
\end{eqnarray}
It remains to estimate $\|u-u_h\|_{0,\Omega}$. Note that it follows from (\ref{u_to_p1}) and (\ref{u_to_p2}) that
\begin{eqnarray}
\alpha\|u- u_h\|^2_{0,\Omega}&\leq& (S^*_h(S_hu_h-y_d)-S^*(Su-y_d),u- u_h)\nonumber\\
&=& (S^*_h(S_h u_h-y_d)-S^*_h(S_hu-y_d),u- u_h)\nonumber\\
&&+(S^*_h(S_hu-y_d)-S^*(Su-y_d),u-u_h)\nonumber\\
&=&(S_h(u_h-u),S_h(u-u_h))+(S^*_h(S_hu-y_d)-S^*(Su-y_d),u-u_h)\nonumber\\
&\leq&(S^*_h(S_hu-y_d)-S^*(Su-y_d),u-u_h),\nonumber
\end{eqnarray}
which yields 
\begin{eqnarray}
\|u-u_h\|_{0,\Omega}\leq C\|S^*_h(S_hu-y_d)-S^*(Su-y_d)\|_{0,\Omega}.\label{u_error}
\end{eqnarray}
Let $\phi\in H_0^1(\Omega)$ be the solution of the following auxiliary problem
\begin{equation}\label{elliptic_auxi}
\left\{\begin{array}{llr}
L\phi= S^*_h(S_hu-y_d)-S^*(Su-y_d)\quad&\mbox{in}\ \Omega, \\
 \ \phi=0  \quad &\mbox{on}\ \partial\Omega.
\end{array}\right.
\end{equation}
Then from the standard duality argument we have
\begin{eqnarray}
&&\|S^*_h(S_hu-y_d)-S^*(Su-y_d)\|_{0,\Omega}^2=a(\phi,S^*_h(S_hu-y_d)-S^*(Su-y_d))\nonumber\\
&=&a(\phi-\phi_h,S^*_h(S_hu-y_d)-S^*(Su-y_d))+a(\phi_h,S^*_h(S_hu-y_d)-S^*(Su-y_d))\nonumber\\
&=&a(\phi-\phi_h,S^*_h(S_hu-y_d)-S^*(Su-y_d))+(\phi_h,S_hu-Su)\nonumber\\
&=&a(\phi-\phi_h,S^*_h(S_hu-y_d)-S^*(Su-y_d))+(\phi_h-\phi,S_hu-Su)+(\phi,S_hu-Su),\label{u_1}
\end{eqnarray}
where $\phi_h\in V_h$ is the finite element approximation of $\phi$. We can conclude from (\ref{proj_error})-(\ref{L2_error}) that
\begin{eqnarray}
&&a(\phi-\phi_h,S^*_h(S_hu-y_d)-S^*(Su-y_d))\nonumber\\
&\leq& C\kappa(h)\|S^*_h(S_hu-y_d)-S^*(Su-y_d)\|_{0,\Omega}\|S^*_h(S_hu-y_d)-S^*(Su-y_d)\|_{a,\Omega}\label{u_2}
\end{eqnarray}
and 
\begin{eqnarray}
(\phi_h-\phi,S_hu-Su)\leq C\kappa^2(h)\|S^*_h(S_hu-y_d)-S^*(Su-y_d)\|_{0,\Omega}\|S_hu-Su\|_{a,\Omega},\label{u_3}\\
(\phi,S_hu-Su)\leq C\kappa(h)\|S^*_h(S_hu-y_d)-S^*(Su-y_d)\|_{0,\Omega}\|S_hu-Su\|_{a,\Omega}.\label{u_4}
\end{eqnarray}
Then we are able to derive that
\begin{eqnarray}
&&\|S^*_h(S_hu-y_d)-S^*(Su-y_d)\|_{0,\Omega}\nonumber\\
&\leq&C\kappa(h)(\|S^*_h(S_hu-y_d)-S^*(Su-y_d)\|_{a,\Omega}+\|S_hu-Su\|_{a,\Omega}).\label{u_44}
\end{eqnarray}
Combining (\ref{u_error}) and (\ref{u_44}) we are led to
\begin{eqnarray}
\|u-u_h\|_{0,\Omega}
 &\lesssim&\kappa(h)(\|S^*_h(S_hu-y_d)-S^*(Su-y_d)\|_{a,\Omega}+\|S_hu-Su\|_{a,\Omega})\nonumber\\
 &\lesssim&\kappa(h)(\|p_h-p\|_{a,\Omega}+\|S^*_h(S_hu-y_d)-S^*_h(S_hu_h-y_d)\|_{a,\Omega}+\|S_hu-Su\|_{a,\Omega})\nonumber\\
 &\lesssim&\kappa(h)(\|p_h-p\|_{a,\Omega}+\|S_hu-S_hu_h\|_{a,\Omega}+\|S_hu-Su\|_{a,\Omega})\nonumber\\
 &\lesssim&\kappa(h)(\|p_h-p\|_{a,\Omega}+\|S_hu_h-Su\|_{a,\Omega}+\|S_hu-S_hu_h\|_{a,\Omega})\nonumber\\
 &\lesssim&\kappa(h)(\|p_h-p\|_{a,\Omega}+\|y_h-y\|_{a,\Omega}+\|u-u_h\|_{0,\Omega}).\label{u_5}
\end{eqnarray}
If $h_0\ll 1$ then $\kappa(h)\ll 1$ for all $h\in (0,h_0)$, and we arrive at
\begin{eqnarray}
\|u-u_h\|_{0,\Omega}\lesssim\kappa(h)(\|p_h-p\|_{a,\Omega}+\|y_h-y\|_{a,\Omega}).\label{lift}
\end{eqnarray}
Inserting the above estimate into (\ref{stab}) and (\ref{stab_p}), we can conclude from (\ref{split}) the desired results  (\ref{equivalence_1})-(\ref{equivalence_2}). This completes the proof.
\end{proof}

Now we are in the position to consider the adaptive finite element method for optimal control problem  (\ref{OCP})-(\ref{OCP_state}). At first we will derive a posteriori error estimates for above optimal control problems. To begin with, we firstly introduce some notations. Similar to the definitions of (\ref{element_res}) and (\ref{jump}) we define the element residuals $r_{y,T}(y_h)$, $r_{p,T}(p_h)$ and the jump residuals $j_{y,E}(y_h)$, $j_{p,E}(p_h)$ by
\begin{eqnarray}
r_{y,T}(y_h):&=& u_h-Ly_h=u_h+\nabla\cdot(A\nabla y_h)-cy_h\quad \mbox{in}\ \mathcal{T}_h,\label{element_res_y}\\
r_{p,T}(p_h):&=& y_h-y_d-L^*p_h=y_h-y_d+\nabla\cdot(A^*\nabla p_h)-cp_h\quad \mbox{in}\ \mathcal{T}_h,\label{element_res_p}\\
j_{y,E}(y_h):&=&[A\nabla y_h]_E\cdot n_E\quad\mbox{on}\ E\in \mathcal{E}_h,\label{jump_y}\\
j_{p,E}(p_h):&=&[A^*\nabla p_h]_E\cdot n_E\quad\mbox{on}\ E\in \mathcal{E}_h.\label{jump_p}
\end{eqnarray}
For each element $T\in \mathcal{T}_h$, we define the local error indicators $\eta_{y,h}(y_h,T)$ and $\eta_{p,h}(p_h,T)$ by
\begin{eqnarray}
\eta_{y,h}(y_h,T):= \Big(h_T^2\|r_{y,T}(y_h)\|_{0,T}^2+\sum\limits_{E\in\mathcal{E}_h,E\subset\partial T}h_E\|j_{y,E}(y_h)\|_{0,E}^2\Big)^{1\over 2},\label{local_y}\\
\eta_{p,h}(p_h,T):= \Big(h_T^2\|r_{p,T}(p_h)\|_{0,T}^2+\sum\limits_{E\in\mathcal{E}_h,E\subset\partial T}h_E\|j_{p,E}(p_h)\|_{0,E}^2\Big)^{1\over 2}.\label{local_p}
\end{eqnarray}
Then on a subset $\omega\subset \Omega$, we define the error estimators $\eta_{y,h} (y_h,\omega)$ and $\eta_{p,h}(p_h,\omega)$ by
\begin{eqnarray}
\eta_{y,h}(y_h,\omega):=\Big(\sum\limits_{T\in\mathcal{T}_h,T\subset\omega}\eta_{y,h}^2(y_h,T)\Big)^{1\over 2},\label{estimator_y} \\
 \eta_{p,h}(p_h,\omega):=\Big(\sum\limits_{T\in\mathcal{T}_h,T\subset\omega}\eta_{p,h}^2( p_h,T)\Big)^{1\over 2}.\label{estimator_p}
\end{eqnarray}
Thus, $\eta_{y,h}(y_h,\Omega)$ and $\eta_{p,h}(p_h,\Omega)$ constitute the error estimators for the state equation and the adjoint state equation on $\Omega$ with respect to $\mathcal{T}_h$. 

Note that $S_hu_h$ and $S^*_h(S_hu_h-y_d)$ are the standard Galerkin projections of $Su_h$ and $S^*(S_hu_h-y_d)$, respectively.  Similar to (\ref{elliptic_upper})-(\ref{elliptic_lower}), standard a posterior error estimates for elliptic boundary value problem give the following upper bounds (see, e.g., \cite{Verfuth}) which show the reliability of the error estimators.
\begin{Lemma}\label{Lm: upper_bound}
Let $S$ and $S_h$ be the continuous and discrete solution operators defined above. Then the following a posteriori error estimates hold
\begin{eqnarray}
\|S u_h-S_hu_h\|_{a,\Omega}^2\leq \tilde C_1\eta_{y,h}^2(y_h,\Omega),\label{state_estimator}\\
\|S^*(S_hu_h-y_d)-S^*_h(S_hu_h-y_d)\|^2_{a,\Omega}\leq \tilde C_1\eta_{p,h}^2(p_h,\Omega).\label{adjoint_estimator}
\end{eqnarray}
\end{Lemma}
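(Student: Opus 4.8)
The plan is to recognize that both inequalities are nothing more than the standard residual-type reliability bound \eqref{elliptic_upper} for an elliptic boundary value problem, applied to two carefully chosen source problems. The key observation is already recorded in the text preceding the lemma: $S_hu_h$ is the Galerkin projection $\mathcal{R}_h$ of $Su_h$, i.e. $S_hu_h$ is the finite element solution of the boundary value problem with right-hand side $u_h$, and likewise $S^*_h(S_hu_h-y_d)$ is the Galerkin projection of $S^*(S_hu_h-y_d)$, the finite element solution of the adjoint boundary value problem with right-hand side $y_h-y_d$ (using $y_h=S_hu_h$). Since $y_h$ solves \eqref{OCP_state_h} with data $f+u_h$ — or, in the normalization of this section, with data $u_h$ — the pair $(Su_h, S_hu_h)$ is exactly an instance of $(y,y_h)$ in \eqref{elliptic_weak}--\eqref{elliptic_weak_h} with $f$ replaced by $u_h$, and similarly $(S^*(S_hu_h-y_d), S^*_h(S_hu_h-y_d))$ is an instance of $(p,p_h)$ in \eqref{elliptic_weak_adjoint}--\eqref{elliptic_weak_adjoint_h} with $g$ replaced by $y_h-y_d$.

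First I would verify that the element and jump residuals appearing in \eqref{local_y}--\eqref{local_p} coincide with the generic residuals \eqref{element_res}--\eqref{jump} for these two source problems: indeed $r_{y,T}(y_h)=u_h-Ly_h$ is precisely $\tilde r_T(\cdot)$ with $f=u_h$, $j_{y,E}(y_h)=[A\nabla y_h]_E\cdot n_E$ is exactly $\tilde j_E(\cdot)$, and $\eta_{y,h}(y_h,T)$ is therefore literally $\tilde\eta_h(y_h,T)$ for that problem; the same matching holds on the adjoint side with $A^*$, $L^*$, $g=y_h-y_d$, so that $\eta_{p,h}(p_h,T)=\tilde\eta_h(p_h,T)$ for the adjoint source problem. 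Once this identification is made, \eqref{state_estimator} is immediate from \eqref{elliptic_upper} and \eqref{adjoint_estimator} is immediate from \eqref{elliptic_upper_adjoint}, with the same constant $\tilde C_1$. I would also note that $u_h-y_d\in L^2(\Omega)$ and $y_h-y_d\in L^2(\Omega)$, so that the Galerkin solution operators $S,S_h$ (resp. $S^*,S^*_h$) apply and the classical reliability estimate is valid with no regularity beyond $H_0^1$.

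There is essentially no hard part here; the only thing requiring a line of care is bookkeeping about the constant right-hand side normalization (whether the state equation carries $f+u_h$ as in \eqref{OCP_state_h} or just $u_h$ as in \eqref{OCP_OPT_h}), but since the reliability bound \eqref{elliptic_upper} holds for arbitrary $L^2$ data this is harmless. I would therefore present the proof as: invoke the identification of $(Su_h,S_hu_h)$ and $(S^*(S_hu_h-y_d),S^*_h(S_hu_h-y_d))$ with solutions of \eqref{elliptic_weak}--\eqref{elliptic_weak_h} and \eqref{elliptic_weak_adjoint}--\eqref{elliptic_weak_adjoint_h} for appropriate data, observe that the estimators $\eta_{y,h}$, $\eta_{p,h}$ defined in \eqref{local_y}--\eqref{estimator_p} are the corresponding $\tilde\eta_h$, and conclude by direct application of \eqref{elliptic_upper} and \eqref{elliptic_upper_adjoint}. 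A reference to \cite{Verfuth} or \cite{Dorfler} for the underlying scalar reliability estimate rounds it off.
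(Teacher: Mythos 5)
Your proposal is correct and follows exactly the route the paper takes: the paper gives no separate argument beyond noting that $S_hu_h$ and $S^*_h(S_hu_h-y_d)$ are the Galerkin projections of $Su_h$ and $S^*(S_hu_h-y_d)$, so that the bounds follow from the standard reliability estimates (\ref{elliptic_upper}) and (\ref{elliptic_upper_adjoint}) applied with data $u_h$ and $y_h-y_d$, citing \cite{Verfuth}. Your extra bookkeeping on the residual identifications and the $f+u_h$ normalization is harmless and consistent with this.
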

Then we can also derive the following global a posteriori error lower bounds, i.e., the global efficiency of the error estimators.
\begin{Lemma}\label{Lm: lower_bound}
Let $S$ and $S_h$ be the continuous and discrete solution operators defined above. Then the following a posteriori error lower bounds hold
\begin{eqnarray}
\tilde C_2\eta_{y,h}^2(y_h, \Omega)&\leq& \|S u_h-S_hu_h\|^2_{a,\Omega}+ \tilde C_3\mbox{osc}^2(u_h-Ly_h,\mathcal{T}_h),\label{state_lower}\\
\tilde C_2\eta_{p,h}^2( p_h,\Omega)&\leq& \|S^*(S_h u_h-y_d)-S^*_h(S_hu_h-y_d)\|^2_{a,\Omega}\nonumber\\
&&+ \tilde C_3\mbox{osc}^2(y_h-y_d-L^*p_h,\mathcal{T}_h).\label{adjoint_lower}
\end{eqnarray}
\end{Lemma}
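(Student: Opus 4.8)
The plan is to deduce both inequalities directly from the boundary value efficiency estimates (\ref{elliptic_lower}) and (\ref{elliptic_lower_adjoint}) of Section~2, by recognising that $\eta_{y,h}$ and $\eta_{p,h}$ are precisely the residual estimators $\tilde\eta_h$ of Section~2 attached to two auxiliary elliptic boundary value problems for which $y_h$ and $p_h$ are the Galerkin finite element solutions. Recall that $y^h=Su_h$ solves $a(y^h,v)=(u_h,v)$ for all $v\in H_0^1(\Omega)$, and that, as noted before Theorem~\ref{Thm:2.1}, $y_h=S_hu_h=\mathcal{R}_h Su_h=\mathcal{R}_h y^h$; hence $y_h\in V_h$ is the Galerkin approximation of the BVP (\ref{elliptic_weak}) with right hand side $f=u_h$ and exact solution $y^h$. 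Likewise $p^h=S^*(S_hu_h-y_d)=S^*(y_h-y_d)$ solves $a(w,p^h)=(y_h-y_d,w)$ for all $w\in H_0^1(\Omega)$, and $p_h=S_h^*(S_hu_h-y_d)=\mathcal{R}_h p^h$ is the Galerkin approximation of the adjoint BVP (\ref{elliptic_weak_adjoint}) with data $g=y_h-y_d$.

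For the state estimate I would then simply match the quantities. Comparing (\ref{element_res})--(\ref{jump}) with (\ref{element_res_y}) and (\ref{jump_y}) shows that $\tilde r_T(y_h)=u_h-Ly_h=r_{y,T}(y_h)$ and $\tilde j_E(y_h)=[A\nabla y_h]_E\cdot n_E=j_{y,E}(y_h)$, so $\tilde\eta_h(y_h,T)=\eta_{y,h}(y_h,T)$ for every $T\in\mathcal{T}_h$ and therefore $\tilde\eta_h(y_h,\Omega)=\eta_{y,h}(y_h,\Omega)$. Since also $\|y^h-y_h\|_{a,\Omega}=\|Su_h-S_hu_h\|_{a,\Omega}$ and $f-Ly_h=u_h-Ly_h$, applying (\ref{elliptic_lower}) to this auxiliary problem yields exactly (\ref{state_lower}) with the same constants $\tilde C_2,\tilde C_3$. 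The adjoint case is entirely parallel: reading (\ref{element_res})--(\ref{jump}) with $L$ replaced by $L^*$ and $f$ by $g=y_h-y_d$ and comparing with (\ref{element_res_p}), (\ref{jump_p}) gives $\tilde\eta_h(p_h,T)=\eta_{p,h}(p_h,T)$; together with $\|p^h-p_h\|_{a,\Omega}=\|S^*(S_hu_h-y_d)-S_h^*(S_hu_h-y_d)\|_{a,\Omega}$ and $g-L^*p_h=y_h-y_d-L^*p_h$, estimate (\ref{elliptic_lower_adjoint}) reduces to (\ref{adjoint_lower}).

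I do not expect a genuine obstacle here; the only point deserving explicit care is the identification of $y_h$ and $p_h$ as the Galerkin projections $\mathcal{R}_h y^h$ and $\mathcal{R}_h p^h$ of functions that solve \emph{exact} boundary value problems with the discrete data $u_h$ and $y_h-y_d$ — this is precisely what lets the control-problem estimators be matched termwise with the boundary value estimators, and it is already recorded in the excerpt. If one prefers a self-contained argument, (\ref{state_lower})--(\ref{adjoint_lower}) can instead be proved directly by the classical interior- and edge-bubble-function technique of Verf\"urth, testing the residual equations $(u_h-Ly_h,\,\cdot\,)$ and $(y_h-y_d-L^*p_h,\,\cdot\,)$ against suitably scaled bubbles on each element and face and using inverse estimates to absorb the scaling; this involves no new idea and produces oscillation terms of exactly the stated form, the $\mbox{osc}$ contributions arising from replacing $u_h-Ly_h$ and $y_h-y_d-L^*p_h$ by their elementwise $L^2$-projections.
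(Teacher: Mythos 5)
Your argument is correct and coincides with the paper's (implicit) proof: the paper likewise notes that $y_h=S_hu_h$ and $p_h=S_h^*(S_hu_h-y_d)$ are the Galerkin projections of $y^h=Su_h$ and $p^h=S^*(S_hu_h-y_d)$, i.e.\ discrete solutions of the boundary value problems (\ref{elliptic_weak}) and (\ref{elliptic_weak_adjoint}) with data $u_h$ and $y_h-y_d$, and then invokes the standard lower bounds (\ref{elliptic_lower}), (\ref{elliptic_lower_adjoint}), the estimators matching termwise exactly as you describe.
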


Let $h_0\in (0,1)$ be the mesh size of the initial mesh $\mathcal{T}_{h_0}$ and define
\begin{eqnarray}
\tilde\kappa(h_0):=\sup\limits_{h\in (0,h_0]}\kappa(h).\nonumber
\end{eqnarray}
It is obvious that $\tilde\kappa(h_0)\ll 1$ if $h_0\ll 1$. For ease of exposition we also define the following quantities: 
\begin{eqnarray}
\eta_h^2((y_h,p_h),T)&=&\eta_{y,h}^2(y_{h},T)+\eta_{p,h}^2(p_{h},T),\nonumber\\
\mbox{osc}^2((y_h,p_h),T)&=&\mbox{osc}^2(u_h-Ly_h,T)+\mbox{osc}^2(y_h-y_d-L^*p_h,T),\nonumber
\end{eqnarray} 
and the straightforward modifications for $\eta_h^2((y_h,p_h),\Omega)$ and $\mbox{osc}^2((y_h,p_h),\mathcal{T}_h)$.

Now we state the following a posteriori error estimates for the finite element approximation
of the optimal control problem.
\begin{Theorem}\label{Thm:2.3}
Let $h\in (0,h_0)$. Assume that $(u,y,p)\in U_{ad}\times H_0^1(\Omega)\times H_0^1(\Omega)$ and $(u_h,y_h,p_h)\in U_{ad}\times V_h\times V_h$ are the
 solutions of problems (\ref{OCP})-(\ref{OCP_state}) and
 (\ref{OCP_h})-(\ref{OCP_state_h}), respectively.
 Then there exist positive constants $C_1$, $C_2$ and $C_3$, independent of the mesh size $h$, such that
\begin{eqnarray}\label{u_upper}
\|(y-y_h,p- p_h)\|_{a}^2\leq C_1\eta_{h}^2( (y_h,p_h),\Omega)
\end{eqnarray}
and
\begin{eqnarray}\label{u_lower}
C_2\eta_{h}^2((y_h,p_h),\Omega)\leq\|(y-y_h,p-p_h)\|_{a}^2+C_3\mbox{osc}^2((y_h,p_h),\mathcal{T}_h)
\end{eqnarray}
provided $h_0\ll 1$.
\end{Theorem}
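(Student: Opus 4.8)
The plan is to reduce both inequalities to the already-established upper and lower bounds for the two elliptic boundary value problems (Lemmas~\ref{Lm: upper_bound} and~\ref{Lm: lower_bound}) together with the compact equivalence property of Theorem~\ref{Thm:2.2}. The key point is that $\eta_{y,h}(y_h,\Omega)$ and $\eta_{p,h}(p_h,\Omega)$ are \emph{exactly} the residual estimators for the finite element approximations $y_h=\mathcal{R}_h y^h=S_h u_h$ of $y^h=Su_h$ and $p_h=\mathcal{R}_h p^h=S_h^*(S_hu_h-y_d)$ of $p^h=S^*(S_hu_h-y_d)$, since the right-hand sides $u_h$ and $y_h-y_d$ in the residuals~(\ref{element_res_y})--(\ref{element_res_p}) are precisely the data of those two auxiliary problems.

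First I would prove the upper bound~(\ref{u_upper}). Adding the squared estimates of Lemma~\ref{Lm: upper_bound} gives
\begin{eqnarray}
\|y^h-y_h\|_{a,\Omega}^2+\|p^h-p_h\|_{a,\Omega}^2\leq \tilde C_1\,\eta_h^2((y_h,p_h),\Omega).\nonumber
\end{eqnarray}
By~(\ref{equivalence_1})--(\ref{equivalence_2}) of Theorem~\ref{Thm:2.2}, $\|y-y_h\|_{a,\Omega}\leq \|y^h-y_h\|_{a,\Omega}+C\kappa(h)\big(\|y-y_h\|_{a,\Omega}+\|p-p_h\|_{a,\Omega}\big)$ and similarly for $p$; squaring, using Young's inequality to absorb the cross terms, and summing yields
\begin{eqnarray}
\|(y-y_h,p-p_h)\|_a^2\leq C\big(\|y^h-y_h\|_{a,\Omega}^2+\|p^h-p_h\|_{a,\Omega}^2\big)+C\kappa^2(h)\|(y-y_h,p-p_h)\|_a^2.\nonumber
\end{eqnarray}
Since $h_0\ll 1$ forces $\kappa(h)\leq\tilde\kappa(h_0)\ll 1$ on $(0,h_0)$, the last term is absorbed into the left-hand side, and combining with the displayed estimate gives~(\ref{u_upper}) with $C_1=C\tilde C_1$.

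For the lower bound~(\ref{u_lower}) I would run the argument in the other direction. From Lemma~\ref{Lm: lower_bound}, summing the two inequalities,
\begin{eqnarray}
\tilde C_2\,\eta_h^2((y_h,p_h),\Omega)\leq \|y^h-y_h\|_{a,\Omega}^2+\|p^h-p_h\|_{a,\Omega}^2+\tilde C_3\,\mbox{osc}^2((y_h,p_h),\mathcal{T}_h).\nonumber
\end{eqnarray}
Now I bound $\|y^h-y_h\|_{a,\Omega}+\|p^h-p_h\|_{a,\Omega}$ above by $\|(y-y_h,p-p_h)\|_a$ up to a $O(\kappa(h))$ multiple of $\|(y-y_h,p-p_h)\|_a$ itself, again via Theorem~\ref{Thm:2.2} (rearranging~(\ref{equivalence_1})--(\ref{equivalence_2}) to solve for $\|y^h-y_h\|_{a,\Omega}$); squaring and using $\kappa(h)\ll 1$ gives $\|y^h-y_h\|_{a,\Omega}^2+\|p^h-p_h\|_{a,\Omega}^2\leq C\|(y-y_h,p-p_h)\|_a^2$. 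Substituting into the displayed inequality yields~(\ref{u_lower}) with $C_2=\tilde C_2$ and $C_3=\tilde C_3$ (or $C_3$ a fixed multiple thereof). The one place needing a little care — and the main obstacle — is the absorption step: one must check that the constants hidden in the $O(\kappa(h))$ terms of Theorem~\ref{Thm:2.2} are genuinely independent of $h$ (they are, by the proof of that theorem), so that choosing $h_0$ small enough makes $C\kappa^2(h)<\tfrac12$ uniformly on $(0,h_0)$; everything else is Young's inequality and bookkeeping.
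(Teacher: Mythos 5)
Your proposal is correct and follows essentially the same route as the paper's own proof: combine the residual bounds of Lemmas~\ref{Lm: upper_bound} and~\ref{Lm: lower_bound} for the auxiliary problems with data $u_h$ and $y_h-y_d$ with the equivalence~(\ref{equivalence_1})--(\ref{equivalence_2}) of Theorem~\ref{Thm:2.2}, then absorb the $O(\kappa(h))$ terms using $\tilde\kappa(h_0)\ll 1$. The only cosmetic difference is in the final constants (the paper records $C_1=\frac{2\tilde C_1}{1-\hat C_1\tilde\kappa^2(h_0)}$, $C_2=\frac{\tilde C_2}{1+\hat C_2\tilde\kappa^2(h_0)}$, $C_3=\frac{\tilde C_3}{1+\hat C_2\tilde\kappa^2(h_0)}$, whereas your $C_2=\tilde C_2$ should likewise carry a harmless factor slightly less than one), which does not affect the statement.
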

\begin{proof}
Note that $y^h = Su_h$, $y_h=S_hu_h$, $p^h=S^*(S_h u_h-y_d)$ and $p_h=S^*_h(S_h u_h-y_d)$. From the estimates (\ref{equivalence_1})-(\ref{equivalence_2}), Lemmas \ref{Lm: upper_bound} and \ref{Lm: lower_bound} we have
\begin{eqnarray}
\|(y-y_h,p-p_h)\|_{a}^2&\leq& 2(\|y^h-y_h\|_{a,\Omega}^2+\|p^h-p_h\|_{a,\Omega}^2)+\hat C_1\kappa^2(h)\|(y-y_h,p- p_h)\|_{a}^2\nonumber\\
&\leq&2\tilde C_1\eta_{h}^2((y_h,p_h),\Omega)+\hat C_1\tilde\kappa^2(h_0)\|(y-y_h,p- p_h)\|_{a}^2\nonumber
\end{eqnarray}
and 
\begin{eqnarray}
\tilde C_2\eta_{h}^2((y_h,p_h),\Omega)&\leq& (\|y^h-y_h\|_{a,\Omega}^2+\|p^h-p_h\|_{a,\Omega}^2) +\tilde C_3\mbox{osc}^2((y_h,p_h),\mathcal{T}_h)\nonumber\\
&\leq&\|(y-y_h,p-p_h)\|_{a,\Omega}^2 +\tilde C_3\mbox{osc}^2((y_h,p_h),\mathcal{T}_h)\nonumber\\
&&+\hat C_2\tilde\kappa^2(h_0)\|(y-y_h,p- p_h)\|_{a}^2.\nonumber
\end{eqnarray}
We obtain the desired results by choosing 
\begin{eqnarray}
C_1=\frac{2\tilde C_1}{1-\hat C_1\tilde\kappa^2(h_0)},\quad C_2=\frac{\tilde C_2}{1+\hat C_2\tilde\kappa^2(h_0)},\quad C_3=\frac{\tilde C_3}{1+\hat C_2\tilde\kappa^2(h_0)}.\label{C123}
\end{eqnarray}
\end{proof}

The adaptive finite element procedure consists of the following loops
$$\mbox{SOLVE}\rightarrow \mbox{ESTIMATE}\rightarrow \mbox{MARK}\rightarrow \mbox{REFINE}.$$
The ESTIMATE step is based on the a posteriori error estimators presented in Theorem \ref{Thm:2.3}, while the step REFINE can be done by using iterative or recursive bisection of elements with the minimal refinement condition (see \cite{Stevenson,Verfuth}). Due to \cite{Cascon}, the procedure REFINE here is not required to satisfy the interior node property of \cite{Morin}. Note that there are two error estimators $\eta_{y,h}(y_h,T)$ and $\eta_{p,h}(p_h,T)$ contributed to the state approximation and adjoint state approximation, respectively. We use the sum of the two estimators as our indicators for the marking strategy. The marking algorithm based on the D\"{o}rfler's strategy for optimal control problems can be described as follows
\begin{Algorithm}\label{Alg:3.0}The D\"{o}rfler's marking strategy for OCPs
\begin{enumerate}
\item Given a parameter $0<\theta<1$;

\item Construct a minimal subset $\tilde{\mathcal{T}}_{h}\subset \mathcal{T}_h$ such that
\begin{equation*}
\sum\limits_{T\in\tilde{\mathcal{T}}_{h}}\eta_{h}^2((y_h,p_h),T)\geq \theta\eta_{h}^2((y_h,p_h),\Omega).
\end{equation*}

\item Mark all the elements in $\tilde{\mathcal{T}}_{h}$.

\end{enumerate}
\end{Algorithm}

Then we can present the adaptive finite element algorithm for the optimal
control problem (\ref{OCP_h})-(\ref{OCP_state_h})  as follows:
\begin{Algorithm}\label{Alg:3.1}
Adaptive finite element algorithm for OCPs:
\begin{enumerate}
\item Given an initial mesh $\mathcal{T}_{h_0}$ with mesh size $h_0$ and construct the finite element space $V_{h_0}$.

\item Set $k=0$ and solve the optimal control problem (\ref{OCP_h})-(\ref{OCP_state_h}) to obtain 
$(u_{h_k},y_{h_k},p_{h_k})\in U_{ad}\times V_{h_k}\times V_{h_k}$.

\item Compute the local error indicator $\eta_{h_k}((y_{h_k},p_{h_k}),T)$.

\item Construct $\tilde{ \mathcal{T}}_{h_k}\subset\mathcal{T}_{h_k}$ by the marking Algorithm \ref{Alg:3.0}.

\item Refine $\tilde{ \mathcal{T}}_{h_k}$ to get a new conforming mesh $\mathcal{T}_{h_{k+1}}$ by procedure REFINE.

\item Construct the finite element space $V_{h_{k+1}}$ and solve the optimal control problem (\ref{OCP_h})-(\ref{OCP_state_h}) to obtain 
$(u_{h_{k+1}},y_{h_{k+1}},p_{h_{k+1}})\in U_{ad}\times V_{h_{k+1}}\times V_{h_{k+1}}$.

\item Set $k=k+1$ and go to Step (3).

\end{enumerate}
\end{Algorithm}

\section{Convergence of AFEM for optimal control problem}
\setcounter{equation}{0}

In this section we intend to prove the convergence of the adaptive Algorithm \ref{Alg:3.1}. The proof uses some ideas of \cite{Dai,He} and some results of \cite{Cascon}. Following Theorem \ref{Thm:2.2}, we may firstly establish some relationships between the two level approximations, which will be used in our analysis for both convergence and optimal complexity.
\begin{Theorem}\label{Thm:4.1}
Let $h,H\in (0,h_0)$ and $(u,y,p)\in U_{ad}\times H_0^1(\Omega)\times H_0^1(\Omega)$ be the
 solution of problem (\ref{OCP})-(\ref{OCP_state}). Assume that $(u_h, y_h, p_h)\in U_{ad}\times V_h\times V_h$  and $(u_H, y_H, p_H)\in U_{ad}\times V_H\times V_H$
 are the solutions of problem (\ref{OCP_h})-(\ref{OCP_state_h}), respectively. Define $y^H:=Su_H$ and $p^H:=S^*(S_Hu_H-y_d)$. Then the following properties hold
\begin{eqnarray}
\|y-y_h\|_{a,\Omega}&=& \|y^H-\mathcal{R}_h y^H\|_{a,\Omega}
+O(\tilde\kappa(h_0))\big(\|y-y_h\|_{a,\Omega}+\|y-y_H\|_{a,\Omega}\nonumber\\
&&+\|p-p_h\|_{a,\Omega}+\|p-p_H\|_{a,\Omega}\big),\label{two_level_equ_y}\\
\|p-p_h\|_{a,\Omega}&=& \|p^H-\mathcal{R}_h p^H\|_{a,\Omega}+O(\tilde\kappa(h_0))\big(\|y-y_h\|_{a,\Omega}+\|y-y_H\|_{a,\Omega}\nonumber\\
&&+\|p-p_h\|_{a,\Omega}+\|p-p_H\|_{a,\Omega}\big)\label{two_level_equ_p},\\
\mbox{osc}(u_h-Ly_h,\mathcal{T}_h)&=& \mbox{osc}(u_H-L\mathcal{R}_hy^H,\mathcal{T}_h)+O(\tilde\kappa(h_0))\big(\|y-y_h\|_{a,\Omega}+\|p-p_h\|_{a,\Omega}\nonumber\\
&&+\|y-y_H\|_{a,\Omega}+\|p-p_H\|_{a,\Omega}\big)\label{two_level_data_y},\\
\mbox{osc}(y_h-y_d-L^*p_h,\mathcal{T}_h)&=& \mbox{osc}(y_H-y_d-L^*\mathcal{R}_hp^H,\mathcal{T}_h)+O(\tilde\kappa(h_0))\big(\|y-y_h\|_{a,\Omega}\nonumber\\
&&+\|p-p_h\|_{a,\Omega}+\|y-y_H\|_{a,\Omega}+\|p-p_H\|_{a,\Omega}\big)\label{two_level_data_p}
\end{eqnarray}
and
\begin{eqnarray}
\eta_{y,h}(y_h,\Omega)&=& \tilde\eta_{h}(\mathcal{R}_hy^H,\Omega)+O(\tilde \kappa(h_0))\big(\|y-y_h\|_{a,\Omega}+\|y-y_H\|_{a,\Omega}\nonumber\\
&&+\|p-p_h\|_{a,\Omega}+\|p-p_H\|_{a,\Omega}\big),\label{two_level_estimator_y}\\
\eta_{p,h}(p_h,\Omega)&=& \tilde\eta_{h}(\mathcal{R}_hp^H,\Omega)+O(\tilde \kappa(h_0))\big(\|y-y_h\|_{a,\Omega}+\|y-y_H\|_{a,\Omega}\nonumber\\
&&+\|p-p_h\|_{a,\Omega}+\|p-p_H\|_{a,\Omega}\big)\label{two_level_estimator_p}
\end{eqnarray}
provided $h_0\ll 1$.
\end{Theorem}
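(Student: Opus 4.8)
The plan is to reduce each of the six identities to the single-level equivalence result of Theorem~\ref{Thm:2.2} together with the stability of the Galerkin projection $\mathcal{R}_h$ and the oscillation bound of Lemma~\ref{Lm:2.4}. First I would record the obvious decompositions
\begin{eqnarray}
y-y_h=(y^H-\mathcal{R}_hy^H)+(y-y^H)+(\mathcal{R}_hy^H-y_h),\nonumber
\end{eqnarray}
and similarly for $p-p_h$. The middle term $y-y^H=Su-Su_H$ is controlled by $\|u-u_H\|_{0,\Omega}$ via the stability estimate~(\ref{stab}), and by the lifting estimate~(\ref{lift}) applied on $\mathcal{T}_H$ this is $O(\tilde\kappa(h_0))(\|y-y_H\|_{a,\Omega}+\|p-p_H\|_{a,\Omega})$, a higher order quantity. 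For the last term I would use that $y_h=\mathcal{R}_hy^h=\mathcal{R}_hSu_h$ while $\mathcal{R}_hy^H=\mathcal{R}_hSu_H$, so by~(\ref{Ritz_stab}) we get $\|\mathcal{R}_hy^H-y_h\|_{a,\Omega}\lesssim\|S(u_H-u_h)\|_{a,\Omega}\lesssim\|u-u_H\|_{0,\Omega}+\|u-u_h\|_{0,\Omega}$, which again is $O(\tilde\kappa(h_0))$ times the sum of the four energy errors after invoking~(\ref{lift}) on both meshes. Collecting these, the first two identities~(\ref{two_level_equ_y})--(\ref{two_level_equ_p}) follow, since $y^H-\mathcal{R}_hy^H$ is exactly the quantity appearing on the right.

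Next I would treat the oscillation identities~(\ref{two_level_data_y})--(\ref{two_level_data_p}). Here the point is that $u_h-Ly_h$ and $u_H-L\mathcal{R}_hy^H$ differ by $(u_h-u_H)-L(y_h-\mathcal{R}_hy^H)=(u_h-u_H)-L(\mathcal{R}_h(y^h-y^H))$; using sublinearity of $\mbox{osc}$ (\ref{osc_linear}), the trivial bound $\mbox{osc}(u_h-u_H,\mathcal{T}_h)\leq h_0\|u_h-u_H\|_{0,\Omega}$ (or simply $\lesssim\|u_h-u_H\|_{0,\Omega}$), and the oscillation--energy bound $\mbox{osc}(L\mathcal{R}_h(y^h-y^H),\mathcal{T}_h)\leq C_*\|\mathcal{R}_h(y^h-y^H)\|_{a,\Omega}$ from Lemma~\ref{Lm:2.4}, together with~(\ref{Ritz_stab}), one reduces everything to $\|u_h-u_H\|_{0,\Omega}$ and $\|y^h-y^H\|_{a,\Omega}$. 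Both are again $O(\tilde\kappa(h_0))$ times the four energy errors, exactly as in the first step. The adjoint oscillation~(\ref{two_level_data_p}) is handled identically, replacing $u_h$ by $y_h-y_d$, $L$ by $L^*$, and using that $y_h-\mathcal{R}_hy^H$ as well as $p_h-\mathcal{R}_hp^H$ are of the required order.

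Finally, for the estimator identities~(\ref{two_level_estimator_y})--(\ref{two_level_estimator_p}), I would observe that $\tilde\eta_h(\mathcal{R}_hy^H,\Omega)$ is the BVP estimator for the source $u_H$ (since $\mathcal{R}_hy^H=S_h u_H$ is the Galerkin solution with data $u_H$), whereas $\eta_{y,h}(y_h,\Omega)$ is the estimator with data $u_h$; the two element residuals differ precisely by $u_h-u_H$ (the jump residuals differ by $[A\nabla\mathcal{R}_h(y^h-y^H)]\cdot n_E$). Using the reverse triangle inequality for the $\ell^2$-type estimator, the local bounds $h_T\|u_h-u_H\|_{0,T}\lesssim\|u_h-u_H\|_{0,T}$ and a standard trace/inverse estimate for the jump terms against $\|\mathcal{R}_h(y^h-y^H)\|_{a,\Omega}$, plus~(\ref{Ritz_stab}), I get $|\eta_{y,h}(y_h,\Omega)-\tilde\eta_h(\mathcal{R}_hy^H,\Omega)|\lesssim\|u_h-u_H\|_{0,\Omega}+\|y^h-y^H\|_{a,\Omega}$, which is $O(\tilde\kappa(h_0))$ times the four energy errors as before; the adjoint estimator is symmetric. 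The main obstacle I anticipate is purely bookkeeping: one must be careful that \emph{every} auxiliary quantity ($\|u-u_h\|_{0,\Omega}$, $\|u-u_H\|_{0,\Omega}$, $\|y^h-y^H\|_{a,\Omega}$, etc.) is genuinely absorbed into the $O(\tilde\kappa(h_0))$ term carrying \emph{all four} energy errors, which forces one to apply the lifting estimate~(\ref{lift}) on both $\mathcal{T}_h$ and $\mathcal{T}_H$ and to use $\|u_h-u_H\|_{0,\Omega}\leq\|u-u_h\|_{0,\Omega}+\|u-u_H\|_{0,\Omega}$ at the right moments; no single step is deep, but the constants must be tracked consistently and the smallness $h_0\ll1$ invoked exactly where~(\ref{lift}) requires it.
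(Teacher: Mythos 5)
Your argument is correct and, for the first four identities, is essentially the paper's own proof: the decomposition $y-y_h=(y^H-\mathcal{R}_hy^H)+\mathcal{R}_h(y^H-y^h)+(y-y^H)$ (and its analogue for $p$), the stability (\ref{Ritz_stab}), the estimates (\ref{stab}), (\ref{stab1}) and the lifting (\ref{lift}) applied on both meshes, and for the oscillations the splitting of $u_h-Ly_h$ against $u_H-L\mathcal{R}_hy^H$ together with (\ref{osc_linear}), $\mathrm{osc}(f,\mathcal{T}_h)\lesssim\|f\|_{0,\Omega}$ and Lemma \ref{Lm:2.4} are exactly the steps used there. One small imprecision: in the adjoint oscillation the data perturbation is $y_h-y_H$ (not $y_h-\mathcal{R}_hy^H$), which you must bound by $\|y-y_h\|_{0,\Omega}+\|y-y_H\|_{0,\Omega}$ via (\ref{stab1}) before lifting; this is harmless and is what the paper does. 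Where you genuinely diverge is in (\ref{two_level_estimator_y})--(\ref{two_level_estimator_p}): the paper notes that $y^h-y^H$ solves the BVP with datum $u_h-u_H$ and applies the global efficiency bound (\ref{elliptic_lower}) to $\mathcal{R}_h(y^h-y^H)$, then controls the extra oscillation term via (\ref{osc_linear}), (\ref{y_3}) and (\ref{y_4}); you instead use a reverse triangle inequality for the $\ell^2$-structured estimator and bound the estimator of the discrete difference directly by $\|u_h-u_H\|_{0,\Omega}+\|\mathcal{R}_h(y^h-y^H)\|_{a,\Omega}$ through scaled trace/inverse inequalities. Both routes are sound and land on the same quantity; the paper's reuses only lemmas already stated in Section 2 (at the cost of some oscillation bookkeeping), while yours is more self-contained in spirit but implicitly re-proves the discrete efficiency estimate, so you would need to state the standard elementwise trace/inverse estimates (and note that for piecewise linears $\nabla\cdot(A\nabla w_h)$ reduces to lower-order terms) since the paper does not provide them explicitly.
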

\begin{proof}
Note that 
\begin{eqnarray}
y-y_h=y^H-\mathcal{R}_hy^H+\mathcal{R}_h(y^H-y^h)+y-y^H\label{x_1}
\end{eqnarray}
and
\begin{eqnarray}
p-p_h=p^H-\mathcal{R}_hp^H+\mathcal{R}_h(p^H-p^h)+p-p^H.\label{x_2}
\end{eqnarray}
On the other hand, it follows from (\ref{Ritz_stab}) that
\begin{eqnarray}
\|\mathcal{R}_h(y^H-y^h)+y-y^H\|_{a,\Omega}&\lesssim& \|y^H-y^h\|_{a,\Omega}+\|y-y^H\|_{a,\Omega}\nonumber\\
&\lesssim&\|y-y^h\|_{a,\Omega}+\|y-y^H\|_{a,\Omega}\nonumber\\
&\lesssim&\|u-u_h\|_{0,\Omega}+\|u-u_H\|_{0,\Omega}\label{x_3}
\end{eqnarray}
and
\begin{eqnarray}
\|\mathcal{R}_h(p^H-p^h)+p-p^H\|_{a,\Omega}&\lesssim& \|p^H-p^h\|_{a,\Omega}+\|p-p^H\|_{1,\Omega}\nonumber\\
&\lesssim&\|y-y_h\|_{0,\Omega}+\|y-y_H\|_{0,\Omega}\nonumber\\
&\lesssim&\|u-u_h\|_{0,\Omega}+\kappa(h)\|y-y_h\|_{a,\Omega}\nonumber\\
&&+\|u-u_H\|_{0,\Omega}+\kappa(H)\|y-y_H\|_{a,\Omega},\label{x_4}
\end{eqnarray}
where in the last inequality we used (\ref{stab1}). It follows from (\ref{lift}) that
\begin{eqnarray}
&&\|\mathcal{R}_h(y^H-y^h)+y-y^H\|_{a,\Omega}+\|\mathcal{R}_h(p^H-p^h)+p-p^H\|_{a,\Omega}\nonumber\\
&\lesssim& \kappa(h)\Big(\|y-y_h\|_{a,\Omega}+\|p-p_h\|_{a,\Omega}\Big)+\kappa(H)\Big(\|y-y_H\|_{a,\Omega}+\|p-p_H\|_{a,\Omega}\Big)\nonumber\\
&\lesssim& \tilde\kappa(h_0)\Big(\|y-y_h\|_{a,\Omega}+\|p-p_h\|_{a,\Omega}+\|y-y_H\|_{a,\Omega}+\|p-p_H\|_{a,\Omega}\Big)\label{x_5}
\end{eqnarray}
provided $h_0\ll 1$. This combining with (\ref{x_1})-(\ref{x_2}) yields (\ref{two_level_equ_y}) and (\ref{two_level_equ_p}).

Then we prove (\ref{two_level_data_y})-(\ref{two_level_data_p}). Note that 
\begin{eqnarray}
u_h-Ly_h=u_H-L\mathcal{R}_hy^H+L\mathcal{R}_h(y^H-y^h)+(u_h-u_H),\label{y_1}\\
y_h-y_d-L^*p_h=y_H-y_d-L^*\mathcal{R}_hp^H+L^*\mathcal{R}_h(p^H-p^h)+(y_h-y_H)\label{y_2}.
\end{eqnarray}
From Lemma \ref{Lm:2.4} we have 
\begin{eqnarray}
\mbox{osc}(L\mathcal{R}_h(y^H-y^h),\mathcal{T}_h)\lesssim\|\mathcal{R}_h(y^H-y^h)\|_{a,\Omega},\nonumber\\
\mbox{osc}(L^*\mathcal{R}_h(p^H-p^h),\mathcal{T}_h)\lesssim\|\mathcal{R}_h(p^H-p^h)\|_{a,\Omega},\nonumber
\end{eqnarray}
which together with (\ref{x_5}) imply
\begin{eqnarray}
&&\mbox{osc}(L\mathcal{R}_h(y^H-y^h),\mathcal{T}_h)+
\mbox{osc}(L^*\mathcal{R}_h(p^H-p^h),\mathcal{T}_h)\nonumber\\
 &\lesssim&\tilde\kappa(h_0)\Big(\|y-y_h\|_{a,\Omega}+\|p-p_h\|_{a,\Omega}+\|y-y_H\|_{a,\Omega}+\|p-p_H\|_{a,\Omega}\Big).\label{y_3}
\end{eqnarray}
Moreover, since $\bar f_T$ is the $L^2$-projection of $f$ onto piecewise polynomials on $T$, there holds 
\begin{eqnarray}
\mbox{osc}(f,\mathcal{T}_h)=\Big(\sum\limits_{T\in\mathcal{T}_h}\|h_T(f-\bar f_T)\|_{0,T}^2\Big)^{1\over 2}\lesssim\|f\|_{0,\Omega}.\nonumber
\end{eqnarray}
In view of (\ref{stab1}) we thus have
\begin{eqnarray}
\mbox{osc}(u_h-u_H,\mathcal{T}_h)\lesssim\|u_h-u_H\|_{0,\Omega}&\lesssim&\|u-u_h\|_{0,\Omega}+\|u-u_H\|_{0,\Omega},\nonumber\\
\mbox{osc}(y_h-y_H,\mathcal{T}_h)\lesssim\|y_h-y_H\|_{0,\Omega}&\lesssim&\|u-u_h\|_{0,\Omega}+\|u-u_H\|_{0,\Omega}\nonumber\\
&&+\kappa(H)\|y-y_H\|_{a,\Omega}+\kappa(h)\|y-y_h\|_{a,\Omega},\nonumber
\end{eqnarray}
which together with (\ref{lift}) yield
\begin{eqnarray}
\mbox{osc}(u_h-u_H,\mathcal{T}_h)&\lesssim&\tilde\kappa(h_0)\Big(\|y-y_h\|_{a,\Omega}+\|p-p_h\|_{a,\Omega}\nonumber\\
&&+\|y-y_H\|_{a,\Omega}+\|p-p_H\|_{a,\Omega}\Big),\label{y_4}\\
\mbox{osc}(y_h-y_H,\mathcal{T}_h)&\lesssim&\tilde\kappa(h_0)\Big(\|y-y_h\|_{a,\Omega}+\|p-p_h\|_{a,\Omega}\nonumber\\
&&+\|y-y_H\|_{a,\Omega}+\|p-p_H\|_{a,\Omega}\Big).\label{y_5}
\end{eqnarray}
We can conclude the desired results (\ref{two_level_data_y})-(\ref{two_level_data_p}) from the definition of the data oscillation and (\ref{y_1})-(\ref{y_5}).

Now it remains to prove (\ref{two_level_estimator_y}) and (\ref{two_level_estimator_p}). From the definition of $y^H$ and $y^h$ we know that $y^h-y^H$ is the solution of elliptic boundary value problem with right hand side $u_h-u_H$. It follows from (\ref{elliptic_lower}) and (\ref{x_3}) that
\begin{eqnarray}
\tilde\eta_{h}(\mathcal{R}_h(y^h-y^H),\Omega)&\lesssim& \|(y^h-y^H)-\mathcal{R}_h(y^h-y^H)\|_{a,\Omega}\nonumber\\
&&+\mbox{osc}(u_h-u_H-L\mathcal{R}_h(y^h-y^H),\mathcal{T}_h)\nonumber\\
&\lesssim&\|u-u_h\|_{0,\Omega}+\|u-u_H\|_{0,\Omega}\nonumber\\
&&+\mbox{osc}(u_h-u_H-L\mathcal{R}_h(y^h-y^H),\mathcal{T}_h).\label{y_6}
\end{eqnarray}
From (\ref{osc_linear}), (\ref{lift}), (\ref{y_3}) and (\ref{y_4}) we are led to 
\begin{eqnarray}
 \mbox{osc}(u_h-u_H-L\mathcal{R}_h(y^h-y^H),\mathcal{T}_h)&\lesssim&\tilde\kappa(h_0)\Big(\|y-y_h\|_{a,\Omega}+\|p-p_h\|_{a,\Omega}\nonumber\\
&&+\|y-y_H\|_{a,\Omega}+\|p-p_H\|_{a,\Omega}\Big).\label{y_7}
\end{eqnarray}
Note that
\begin{eqnarray}
\eta_{y,h}(y_h,\Omega)=\tilde\eta_{h}(\mathcal{R}_hy^h,\Omega)=\tilde\eta_{h}(\mathcal{R}_hy^H+\mathcal{R}_h(y^h-y^H),\Omega).\nonumber
\end{eqnarray}
This combining with (\ref{y_6}) and (\ref{y_7}) gives 
\begin{eqnarray}
\eta_{y,h}(y_h,\Omega)&=& \tilde\eta_{h}(\mathcal{R}_hy^H,\Omega)+\tilde \kappa(h_0)\Big(\|y-y_h\|_{a,\Omega}+\|y-y_H\|_{a,\Omega}\nonumber\\
&&+\|p-p_h\|_{a,\Omega}+\|p-p_H\|_{a,\Omega}\Big),\nonumber
 \end{eqnarray}
 which proves (\ref{two_level_estimator_y}). Similarly we can prove (\ref{two_level_estimator_p}). Thus, we complete the proof of the theorem.
\end{proof}
Now we are ready to prove the error reduction for the sum of the energy errors and the scaled error estimators of the state $y$ and the adjoint state $p$, between two consecutive adaptive loops.
\begin{Theorem}\label{Thm:4.2}
Let $(u,y,p)\in U_{ad}\times H_0^1(\Omega)\times H_0^1(\Omega)$ be the
 solution of problem (\ref{OCP})-(\ref{OCP_state}) and $(u_{h_k},y_{h_k}, p_{h_k})\in U_{ad}\times V_{h_k}\times V_{h_k}$  be a sequence of solutions to
 problem (\ref{OCP_h})-(\ref{OCP_state_h}) produced by Algorithm \ref{Alg:3.1}. Then there exist constants $\gamma>0$ and $\beta\in (0,1)$ depending only on the shape regularity of meshes and the parameter $\theta$ used by Algorithm \ref{Alg:3.0}, such that for any two consecutive iterates $k$ and $k+1$, we have
\begin{eqnarray}\label{convergence}
&&\|(y-y_{h_{k+1}},p-p_{h_{k+1}})\|_{a}^2+\gamma\eta_{h_{k+1}}^2((y_{h_{k+1}},p_{h_{k+1}}),\Omega)\nonumber\\
&\leq& \beta^2\Big(\|(y-y_{h_{k}},p-p_{h_{k}})\|_{a}^2+\gamma\eta_{h_{k}}^2((y_{h_{k}},p_{h_{k}}),\Omega)\Big)
\end{eqnarray}
provided $h_0\ll 1$. Therefore, Algorithm \ref{Alg:3.1} converges with a linear rate $\beta$, namely, the $k$-th iterate solution $(u_{h_k},y_{h_k},p_{h_k})$ of Algorithm \ref{Alg:3.1} satisfies
\begin{eqnarray}\label{convergence_rate}
\|(y-y_{h_{k}},p-p_{h_{k}})\|_{a}^2+\gamma\eta_{h_{k}}^2((y_{h_{k}},p_{h_{k}}),\Omega)\leq C_0\beta^{2k},
\end{eqnarray}
where $C_0=\|(y-y_{h_{0}},p-p_{h_{0}})\|_{a}^2+\gamma\eta_{h_{0}}^2((y_{h_{0}},p_{h_{0}}),\Omega)$.
\end{Theorem}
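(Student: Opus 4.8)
The plan is a perturbation argument: the $k$-th step of Algorithm \ref{Alg:3.1} is, up to a relative error of order $\tilde\kappa(h_0)$, one step of the AFEM for the linear boundary value \emph{system} obtained by freezing the control at $u_{h_k}$, so that the contraction of Theorem \ref{Thm:elliptic_convergence} can be transplanted. Given $\mathcal{T}_{h_k}$ with solution $(u_{h_k},y_{h_k},p_{h_k})$, I would introduce the auxiliary functions $\hat y:=Su_{h_k}=y^{h_k}$ and $\hat p:=S^*(y_{h_k}-y_d)=p^{h_k}$, the solutions of the state and adjoint boundary value problems (\ref{elliptic_weak}) and (\ref{elliptic_weak_adjoint}) with data $f=u_{h_k}$ and $g=y_{h_k}-y_d$. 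From the definitions one checks that $\mathcal{R}_{h_k}\hat y=y_{h_k}$, $\mathcal{R}_{h_k}\hat p=p_{h_k}$, $\mathcal{R}_{h_{k+1}}\hat y=\mathcal{R}_{h_{k+1}}y^{h_k}$, $\mathcal{R}_{h_{k+1}}\hat p=\mathcal{R}_{h_{k+1}}p^{h_k}$, and that the element and jump residuals of this system at $y_{h_k},p_{h_k}$ are precisely those in (\ref{element_res_y})--(\ref{jump_p}); hence $\tilde\eta_{h_k}(\mathcal{R}_{h_k}\hat y,T)=\eta_{y,h_k}(y_{h_k},T)$ and $\tilde\eta_{h_k}(\mathcal{R}_{h_k}\hat p,T)=\eta_{p,h_k}(p_{h_k},T)$ for every $T\in\mathcal{T}_{h_k}$. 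Consequently the set marked by Algorithm \ref{Alg:3.0} obeys the D\"orfler property with parameter $\theta$ for the auxiliary system, and applying Theorem \ref{Thm:elliptic_convergence} over the single refinement $\mathcal{T}_{h_k}\to\mathcal{T}_{h_{k+1}}$ (its contraction factor depends only on $A$, $c$, the mesh regularity and $\theta$, hence is the same for all $k$) gives, with $\gamma:=\tilde\gamma$ from (\ref{tilde_gamma}) and some $\tilde\beta\in(0,1)$,
\begin{eqnarray}
&&\|(\hat y-\mathcal{R}_{h_{k+1}}\hat y,\hat p-\mathcal{R}_{h_{k+1}}\hat p)\|_a^2+\gamma\big(\tilde\eta_{h_{k+1}}^2(\mathcal{R}_{h_{k+1}}\hat y,\Omega)+\tilde\eta_{h_{k+1}}^2(\mathcal{R}_{h_{k+1}}\hat p,\Omega)\big)\nonumber\\
&\leq&\tilde\beta^2\Big(\|(\hat y-y_{h_k},\hat p-p_{h_k})\|_a^2+\gamma\,\eta_{h_k}^2((y_{h_k},p_{h_k}),\Omega)\Big).\nonumber
\end{eqnarray}

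It then remains to convert both sides to the optimal control quantities. Write $\mathcal{E}_j^2:=\|(y-y_{h_j},p-p_{h_j})\|_a^2+\gamma\,\eta_{h_j}^2((y_{h_j},p_{h_j}),\Omega)$. On the right-hand side of the above inequality the estimator term equals $\gamma\,\eta_{h_k}^2((y_{h_k},p_{h_k}),\Omega)$ identically, while Theorem \ref{Thm:2.2} (equations (\ref{equivalence_1})--(\ref{equivalence_2}) with $h=h_k$) bounds $\big|\|\hat y-y_{h_k}\|_{a,\Omega}-\|y-y_{h_k}\|_{a,\Omega}\big|$ and $\big|\|\hat p-p_{h_k}\|_{a,\Omega}-\|p-p_{h_k}\|_{a,\Omega}\big|$ by $C\tilde\kappa(h_0)\big(\|y-y_{h_k}\|_{a,\Omega}+\|p-p_{h_k}\|_{a,\Omega}\big)$; squaring with Young's inequality for a fixed small $\varepsilon>0$, the right-hand side is $\le(1+\varepsilon)\tilde\beta^2\mathcal{E}_k^2+C_\varepsilon\tilde\kappa^2(h_0)\mathcal{E}_k^2$. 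On the left-hand side, Theorem \ref{Thm:4.1} with $H=h_k$, $h=h_{k+1}$ — equations (\ref{two_level_equ_y})--(\ref{two_level_equ_p}) for the energy errors and (\ref{two_level_estimator_y})--(\ref{two_level_estimator_p}) for the estimators — bounds the mismatch between $\|\hat y-\mathcal{R}_{h_{k+1}}\hat y\|_{a,\Omega}$, $\tilde\eta_{h_{k+1}}(\mathcal{R}_{h_{k+1}}\hat y,\Omega)$ and $\|y-y_{h_{k+1}}\|_{a,\Omega}$, $\eta_{y,h_{k+1}}(y_{h_{k+1}},\Omega)$ (and likewise for $\hat p$) by $C\tilde\kappa(h_0)R$ with $R:=\|y-y_{h_k}\|_{a,\Omega}+\|y-y_{h_{k+1}}\|_{a,\Omega}+\|p-p_{h_k}\|_{a,\Omega}+\|p-p_{h_{k+1}}\|_{a,\Omega}$; since $R^2\lesssim\mathcal{E}_k^2+\mathcal{E}_{k+1}^2$, squaring shows the left-hand side is $\ge(1-\varepsilon)\mathcal{E}_{k+1}^2-C_\varepsilon\tilde\kappa^2(h_0)(\mathcal{E}_k^2+\mathcal{E}_{k+1}^2)$.

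Combining the two bounds gives $\big((1-\varepsilon)-C_\varepsilon\tilde\kappa^2(h_0)\big)\mathcal{E}_{k+1}^2\le\big((1+\varepsilon)\tilde\beta^2+C_\varepsilon\tilde\kappa^2(h_0)\big)\mathcal{E}_k^2$. Fixing $\varepsilon$ small enough that $(1+\varepsilon)\tilde\beta^2/(1-\varepsilon)<1$ (possible since $\tilde\beta<1$) and then taking $h_0\ll1$ so that $\tilde\kappa(h_0)$ is small, one obtains $\mathcal{E}_{k+1}^2\le\beta^2\mathcal{E}_k^2$ with
\[\beta^2:=\frac{(1+\varepsilon)\tilde\beta^2+C_\varepsilon\tilde\kappa^2(h_0)}{(1-\varepsilon)-C_\varepsilon\tilde\kappa^2(h_0)}<1,\]
which is (\ref{convergence}); iterating it from the initial mesh yields (\ref{convergence_rate}) with $C_0=\mathcal{E}_0^2$.

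The main obstacle is this absorption step. The perturbations supplied by Theorem \ref{Thm:4.1} carry the \emph{fine-mesh} errors $\|y-y_{h_{k+1}}\|_{a,\Omega}$, $\|p-p_{h_{k+1}}\|_{a,\Omega}$ alongside the coarse ones, so after squaring they must be absorbed into the left-hand side, and it is exactly this absorption that forces the smallness hypothesis $h_0\ll1$. Everything else is routine bookkeeping; the one point to state with care is that Theorem \ref{Thm:elliptic_convergence} is used only as a one-step estimate for a single refinement of the frozen-control system, which is legitimate because its proof (following \cite{Cascon}) uses nothing but the D\"orfler property of that step and the fact that there the data oscillation is dominated by the estimator.
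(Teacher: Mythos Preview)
Your proof is correct and follows essentially the same perturbation argument as the paper: freeze the control at level $k$, apply the BVP contraction of Theorem \ref{Thm:elliptic_convergence} to the auxiliary system $(\hat y,\hat p)=(y^{h_k},p^{h_k})$, then use Theorems \ref{Thm:2.2} and \ref{Thm:4.1} together with Young's inequality to convert both sides to the optimal-control quantities and absorb the $\tilde\kappa(h_0)$--perturbations (the fine-mesh terms forcing $h_0\ll1$). The only cosmetic difference is that the paper tracks the energy and estimator coefficients separately and ends up with a slightly perturbed $\gamma=\tilde\gamma/(1-C_4\delta_1^{-1}\tilde\kappa^2(h_0))$ and $\beta^2=\big((1+\delta_1)\tilde\beta^2+C_4\tilde\kappa(h_0)\big)/\big(1-C_4\delta_1^{-1}\tilde\kappa^2(h_0)\big)$, whereas your uniform application of Young yields $\gamma=\tilde\gamma$ directly.
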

\begin{proof}
For convenience, we use $(u_H,y_H,p_H)$ and $(u_h,y_h,p_h)$ to denote $(u_{h_k},y_{h_k},p_{h_k})$ and $(u_{h_{k+1}}, y_{h_{k+1}},p_{h_{k+1}})$, respectively. So it suffices to prove that for $(u_H,y_H,p_H)$ and $(u_h,y_h,p_h)$, there holds
\begin{eqnarray}\label{convergence_1}
&&\|(y-y_{h},p-p_{h})\|_{a}^2+\gamma\eta_{h}^2((y_{h},p_{h}),\Omega)\nonumber\\
&\leq& \beta^2\Big(\|(y-y_{H},p-p_{H})\|_{a}^2+\gamma\eta_{H}^2((y_{H},p_{H}),\Omega)\Big).
\end{eqnarray}

Recall that $y^H:=Su_H$, $y^h:=Su_h$ and $p^H:=S^*(S_Hu_H-y_d)$, $p^h:=S^*(S_hu_h-y_d)$. It follows from Algorithm \ref{Alg:3.0} that the D\"{o}rfler's marking strategy in Algorithm \ref{Alg:1.0} is satisfied for $(y^H,p^H)$. So we conclude from Theorem \ref{Thm:elliptic_convergence} that there exist constants $\tilde \gamma$ and $\tilde \beta\in (0,1)$ satisfying 
\begin{eqnarray}
&&\|(y^H-\mathcal{R}_hy^H,p^H-\mathcal{R}_hp^H)\|_{a}^2+\tilde\gamma\big(\tilde\eta_h^2(\mathcal{R}_hy^H,\Omega)+\tilde\eta_h^2(\mathcal{R}_hp^H,\Omega)\big)\nonumber\\
&\leq& \tilde\beta^2\Big(\|(y^H-\mathcal{R}_Hy^H,p^H-\mathcal{R}_Hp^H)\|_{a}^2+\tilde\gamma\big(\tilde\eta_H^2(\mathcal{R}_Hy^H,\Omega)+\tilde\eta_H^2(\mathcal{R}_Hp^H,\Omega)\big)\Big).\label{e_1}
\end{eqnarray}
Note that $\mathcal{R}_Hy^H=y_H$ and $\mathcal{R}_Hp^H=p_H$, we thus have
\begin{eqnarray}
&&\|(y^H-\mathcal{R}_hy^H,p^H-\mathcal{R}_hp^H)\|_{a}^2+\tilde\gamma\big(\tilde\eta_h^2(\mathcal{R}_hy^H,\Omega)+\tilde\eta_h^2(\mathcal{R}_hp^H,\Omega)\big)\nonumber\\
&\leq& \tilde\beta^2\Big(\|(y^H-y_H,p^H-p_H)\|_{a}^2+\tilde\gamma\big(\eta_{y,H}^2(y_H,\Omega)+\eta_{p,H}^2(p_H,\Omega)\big)\Big).\label{e_3}
\end{eqnarray}
We conclude from (\ref{two_level_equ_y})-(\ref{two_level_equ_p}) and (\ref{two_level_estimator_y})-(\ref{two_level_estimator_p}) that there exists a constant $\hat C_4>0$ such that
\begin{eqnarray}
&&\|(y-y_{h},p-p_{h})\|_{a}^2+\tilde\gamma\eta_{h}^2((y_{h},p_{h}),\Omega)\nonumber\\
&\leq&(1+\delta_1)\|(y^H-\mathcal{R}_hy^{H},p^H-\mathcal{R}_hp^{H})\|_{a}^2+(1+\delta_1)\tilde\gamma\big(\tilde\eta_{h}^2(\mathcal{R}_hy^{H},\Omega)+\tilde\eta_{h}^2(\mathcal{R}_hp^{H},\Omega)\big)\nonumber\\
&&+\hat C_4(1+\delta_1^{-1})\tilde \kappa^2(h_0)\Big(\|(y-y_h,p-p_h)\|^2_{a}+\|(y-y_H,p-p_H)\|^2_{a}\Big)\nonumber\\
&&+\hat C_4(1+\delta_1^{-1})\tilde \kappa^2(h_0)\tilde\gamma\Big(\|(y-y_h,p-p_h)\|^2_{a}+\|(y-y_H,p-p_H)\|^2_{a}\Big),\nonumber
\end{eqnarray}
where the $\delta_1$-Young inequality is used and $\delta_1\in(0,1)$ satisfies
\begin{eqnarray}
(1+\delta_1)\tilde\beta^2<1.\label{delta_1}
\end{eqnarray}
Thus, there exists a positive constant $\hat C_5$ depending on $\hat C_4$ and $\tilde\gamma$ such that
\begin{eqnarray}
&&\|(y-y_{h},p-p_{h})\|_{a}^2+\tilde\gamma\eta_{h}^2((y_{h},p_{h}),\Omega)\nonumber\\
&\leq&(1+\delta_1)\Big(\|(y^H-\mathcal{R}_hy^{H},p^H-\mathcal{R}_hp^{H})\|_{a}^2+\tilde\gamma\big(\tilde\eta_{h}^2(\mathcal{R}_hy^{H},\Omega)+\tilde\eta_{h}^2(\mathcal{R}_hp^{H},\Omega)\big)\Big)\nonumber\\
&&+\hat C_5\delta_1^{-1}\tilde \kappa^2\Big(h_0)(\|(y-y_h,p-p_h)\|^2_{a,\Omega}+\|(y-y_H,p-p_H)\|^2_{a,\Omega}\Big).\label{e_5}
\end{eqnarray}
It follows from (\ref{e_3}) and (\ref{e_5}) that
\begin{eqnarray}
&&\|(y-y_{h},p-p_{h})\|_{a}^2+\tilde\gamma\eta_{h}^2((y_{h},p_{h}),\Omega)\nonumber\\
&\leq&(1+\delta_1)\tilde\beta^2\Big(\|(y^H-y_{H},p^H-p_{H})\|_{a}^2+\tilde\gamma\eta_{H}^2((y_{H},p_{H}),\Omega)\Big)\nonumber\\
&&+\hat C_5\delta_1^{-1}\tilde \kappa^2(h_0)\Big(\|(y-y_h,p-p_h)\|^2_{a}+\|(y-y_H,p-p_H)\|^2_{a}\Big).\label{e_6}
\end{eqnarray}
Then using Theorem \ref{Thm:2.2} we arrive at
\begin{eqnarray}
&&\|(y-y_{h},p-p_{h})\|_{a}^2+\tilde\gamma\eta_{h}^2((y_{h},p_{h}),\Omega)\nonumber\\
&\leq&(1+\delta_1)\tilde\beta^2\Big((1+\hat C_6\tilde\kappa(h_0))\|(y-y_{H},p-p_{H})\|_{a}^2+\tilde\gamma\eta_{H}^2((y_{H},p_{H}),\Omega)\Big)\nonumber\\
&&+\hat C_5\delta_1^{-1}\tilde \kappa^2(h_0)\Big(\|(y-y_h,p-p_h)\|^2_{a}+\|(y-y_H,p-p_H)\|^2_{a}\Big),\nonumber
\end{eqnarray}
and thus
\begin{eqnarray}
&&\|(y-y_{h},p-p_{h})\|_{a}^2+\tilde\gamma\eta_{h}^2((y_{h},p_{h}),\Omega)\nonumber\\
&\leq&(1+\delta_1)\tilde\beta^2\Big(\|(y-y_{H},p-p_{H})\|_{a}^2+\tilde\gamma\eta_{H}^2((y_{H},p_{H}),\Omega)\Big)\nonumber\\
&&+C_4\tilde \kappa(h_0)\|(y-y_H,p-p_H)\|^2_{a}+C_4\delta_1^{-1}\tilde \kappa^2(h_0)\|(y-y_h,p-p_h)\|^2_{a},\label{e_7}
\end{eqnarray}
where $C_4$ is a positive constant depending on $\hat C_5$ and $\hat C_6$ when $h_0\ll 1$. So we can derive
\begin{eqnarray}
&&(1-C_4\delta_1^{-1}\tilde \kappa^2(h_0))\|(y-y_{h},p-p_{h})\|_{a}^2+\tilde\gamma\eta_{h}^2((y_{h},p_{h}),\Omega)\nonumber\\
&\leq&\Big((1+\delta_1)\tilde\beta^2+C_4\tilde \kappa(h_0)\Big)\|(y-y_{H},p-p_{H})\|_{a}^2+(1+\delta_1)\tilde\beta^2\tilde\gamma\eta_{H}^2((y_{H},p_{H}),\Omega),\label{e_8}
\end{eqnarray}
or equivalently,
\begin{eqnarray}
&&\|(y-y_{h},p-p_{h})\|_{a}^2+\frac{\tilde\gamma}{1-C_4\delta_1^{-1}\tilde \kappa^2(h_0)}\eta_{h}^2((y_{h},p_{h}),\Omega)\nonumber\\
&\leq&\frac{(1+\delta_1)\tilde\beta^2+C_4\tilde \kappa(h_0)}{1-C_4\delta_1^{-1}\tilde \kappa^2(h_0)}\|(y-y_{H},p-p_{H})\|_{a}^2+\frac{(1+\delta_1)\tilde\beta^2\tilde\gamma}{1-C_4\delta_1^{-1}\tilde \kappa^2(h_0)}\eta_{H}^2((y_{H},p_{H}),\Omega).\label{e_9}
\end{eqnarray}
Since $\tilde \kappa(h_0)\ll 1$ provided that $h_0\ll 1$, we can define the constant $\beta$ as
\begin{eqnarray}
\beta:=\Big(\frac{(1+\delta_1)\tilde\beta^2+C_4\tilde \kappa(h_0)}{1-C_4\delta_1^{-1}\tilde \kappa^2(h_0)}\Big)^{1\over 2},\label{beta}
\end{eqnarray}
which satisfies $\beta\in (0,1)$ if $h_0\ll 1$. Then
\begin{eqnarray}
&&\|(y-y_{h},p-p_{h})\|_{a}^2+\frac{\tilde\gamma}{1-C_4\delta_1^{-1}\tilde \kappa^2(h_0)}\eta_{h}^2((y_{h},p_{h}),\Omega)\nonumber\\
&\leq&\beta^2\Big(\|(y-y_{H},p-p_{H})\|_{a}^2+\frac{(1+\delta_1)\tilde\beta^2\tilde\gamma}{(1+\delta_1)\tilde\beta^2+C_4\tilde \kappa(h_0)}\eta_{H}^2((y_{H},p_{H}),\Omega)\Big).\label{e_10}
\end{eqnarray}
Now we choose
\begin{eqnarray}
\gamma:=\frac{\tilde \gamma}{1-C_4\delta_1^{-1}\tilde \kappa^2(h_0)}, \label{gamma}
\end{eqnarray}
it is obvious that
\begin{eqnarray}
\frac{(1+\delta_1)\tilde\beta^2\tilde\gamma}{(1+\delta_1)\tilde\beta^2+C_4\tilde \kappa(h_0)}=\frac{(1+\delta_1)\tilde\beta^2(1-C_4\delta_1^{-1}\tilde \kappa^2(h_0))\gamma}{(1+\delta_1)\tilde\beta^2+C_4\tilde \kappa(h_0)}< (1-C_4\delta_1^{-1}\tilde \kappa^2(h_0))\gamma<\gamma.\nonumber
\end{eqnarray}
Then we obtain (\ref{convergence_1}), this completes the proof.
\end{proof}
\begin{Remark}\label{Re:initial_mesh}
We remark that the requirement $h_0\ll 1$ on the initial mesh $\mathcal{T}_{h_0}$ is not restrictive for the convergence analysis of AFEM for nonlinear problems, such as optimal control problems studied in this paper, see, e.g., \cite{Gaevskaya}. For similar requirement we refer to \cite{Zhou,Dai} for the convergence analysis of adaptive finite element eigenvalue computations and \cite{Mekchay} for the adaptive finite element computations for nonsymmetric boundary value problems, we should also mention \cite{He} for the adaptive finite element  method of a semilinear elliptic equation.  
\end{Remark}
\begin{Remark}\label{Re:OCP_marking}
In adaptive Algorithm \ref{Alg:3.1} we use the sum of the error estimators $\eta_{y,h}(y_h,T)$ contributed to the state approximation and $\eta_{p,h}(p_h,T)$ contributed to the adjoint state approximation as indicator to select the subset $\tilde{\mathcal{T}}_h$ for refinement. This marking strategy enables us to prove the convergence and quasi-optimality (see Section 5) of AFEM for optimal control problems. We remark that it is also possible to use the separate marking for the contributions of  $\eta_{y,h}(y_h,T)$ and $\eta_{p,h}(p_h,T)$ as follows:
\begin{itemize}
\item Construct a minimal subset $\tilde{\mathcal{T}}_{h,1}\subset \mathcal{T}_h$ such that
$\sum\limits_{T\in\tilde{\mathcal{T}}_{h,1}} \eta_{y,h}^2(y_h,T)\geq \theta\eta_{y,h}^2(y_h,\Omega)$.

\item Construct another minimal subset $\tilde{\mathcal{T}}_{h,2}\subset \mathcal{T}_h$ such that
$\sum\limits_{T\in\tilde{\mathcal{T}}_{h,2}} \eta_{p,h}^2(p_h,T)\geq \theta\eta_{p,h}^2(p_h,\Omega)$.

\item Set $\tilde{\mathcal{T}}_{h}:=\tilde{\mathcal{T}}_{h,1}\cup \tilde{\mathcal{T}}_{h,2}$ and mark all the elements in $\tilde{\mathcal{T}}_{h}$. 
\end{itemize}
With this marking strategy we can also prove the convergence of AFEM for optimal control problems by using the results of \cite{Cascon,Dai} for single boundary value problem. To be more specific, the error reduction (\ref{e_1}) can be derived separately for the state and adjoint state approximations. However, the resulting over-refinement for this marking strategy prevents us to prove the quasi-optimality of the adaptive algorithm. 
\end{Remark}

\section{Complexity of AFEM for optimal control problem}
\setcounter{equation}{0}
In this section we intend to analyse the complexity of adaptive finite element algorithm for optimal control problems based on the known results of the complexity for elliptic boundary value problems. The proof uses some ideas of \cite{Dai,He} and some results of \cite{Cascon}. 

Similar to \cite{Cascon} and \cite{Dai}, for our purpose to analyse the complexity of AFEM for optimal control problems we need to introduce a function approximation class as follows
\begin{eqnarray}
\mathcal{A}_\gamma^s:=\Big\{(y,p,y_d)\in H_0^1(\Omega)\times H_0^1(\Omega)\times L^2(\Omega):\quad |(y,p,y_d)|_{s,\gamma}<+\infty\Big\},\nonumber
\end{eqnarray}
where $\gamma>0$ is some constant and
\begin{eqnarray}
|(y,p,y_d)|_{s,\gamma}=\sup\limits_{\varepsilon>0}\varepsilon\inf\limits_{\mathcal{T}\subset\mathcal{T}_{h_0}:\ \inf(\|(y-y_{\mathcal{T}},p-p_{\mathcal{T}})\|_{a}^2+(\gamma+1)\mbox{osc}^2((y_{\mathcal{T}},p_{\mathcal{T}}),{\mathcal{T}}))^{1/ 2}\leq\varepsilon}(\#{\mathcal{T}}-\#{\mathcal{T}_{h_0}})^s.\nonumber
\end{eqnarray}
Here $\mathcal{T}\subset\mathcal{T}_{h_0}$ means $\mathcal{T}$ is a refinement of $\mathcal{T}_{h_0}$, $y_{\mathcal{T}}$ and $p_{\mathcal{T}}$ are elements of the finite element space corresponding to the partition $\mathcal{T}$. It is seen from the definition that $\mathcal{A}_\gamma^s=\mathcal{A}_1^s$ for all $\gamma>0$, thus we use $\mathcal{A}^s$ throughout the paper with corresponding norm $|\cdot|_{s}$. So $\mathcal{A}^s$ is the class of functions that can be approximated with a given tolerance $\varepsilon$ by continuous peicewise linear polynomial functions over a partition $\mathcal{T}$ with number of degrees of freedom $\#{\mathcal{T}}-\#{\mathcal{T}_{h_0}}\lesssim \varepsilon^{-1/s}|v|_s^{1/s}$.

Now we are in the position to prepare for the proof of optimal complexity of Algorithm \ref{Alg:3.1} for the optimal control problem (\ref{OCP})-(\ref{OCP_state}). At first, we define $y^{h_k}:=Su_{h_k}$ and $p^{h_k}:=S^*(S_{h_k}u_{h_k}-y_d)$. Then we have the following result.
\begin{Lemma}\label{Lm:OCP_decay}
Let $(u_{h_k},y_{h_k},p_{h_k})\in U_{ad}\times V_{h_k}\times V_{h_k}$ and $(u_{h_{k+1}},y_{h_{k+1}},p_{h_{k+1}})\in U_{ad}\times V_{h_{k+1}}\times V_{h_{k+1}}$ be discrete solutions of   problem (\ref{OCP_h})-(\ref{OCP_state_h}) over  mesh $\mathcal{T}_{h_k}$ and its refinement $\mathcal{T}_{h_{k+1}}$ with marked element $\mathcal{M}_{h_{k}}$. Suppose they satisfy the following property
\begin{eqnarray}
&&\|(y-y_{h_{k+1}},p-p_{h_{k+1}})\|_{a}^2+ \gamma_*\mbox{osc}^2((y_{h_{k+1}},p_{h_{k+1}}),\mathcal{T}_{h_{k+1}})\nonumber\\
&\leq&\beta_*^2\Big(\|(y-y_{h_k},p-p_{h_k})\|_{a}^2+\gamma_*\mbox{osc}^2((y_{h_k},p_{h_k}),\mathcal{T}_{h_k})\Big)\label{OCP_decay}
\end{eqnarray}
with $\gamma_*$  and $\beta_*$ some positive constants. Then for the associated state and adjoint state approximations we have
\begin{eqnarray}
&&\|(y^{h_k}-\mathcal{R}_{h_{k+1}}y^{h_k},p^{h_k}-\mathcal{R}_{h_{k+1}}p^{h_k})\|_{a}^2+ \tilde\gamma_*\mbox{osc}^2((\mathcal{R}_{h_{k+1}}y^{h_k},\mathcal{R}_{h_{k+1}}p^{h_k}),\mathcal{T}_{h_{k+1}})\nonumber\\
&\leq&\tilde \beta_*^2\Big(\|(y^{h_k}-\mathcal{R}_{h_k}y^{h_k},p^{h_k}-\mathcal{R}_{h_k}p^{h_k})\|_{a}^2+\tilde\gamma_*\mbox{osc}^2((y_{h_k},p_{h_k}),\mathcal{T}_{h_k})\Big)\label{OCP_BVP_decay}
\end{eqnarray}
with 
\begin{eqnarray}
\tilde\beta_*:=\Big(\frac{(1+\delta_1)\beta^2_*+C_5\tilde \kappa(h_0)}{1-C_5\delta_1^{-1}\tilde \kappa^2(h_0)}\Big)^{1\over 2},\quad \tilde\gamma_*:=\frac{\gamma_*}{1-C_5\delta_1^{-1}\tilde \kappa^2(h_0)},\nonumber
\end{eqnarray}
where $C_5$ is some constant depending on $C_*$, $\hat C_5$ and $\hat C_6$. $\hat C_5$, $\hat C_6$ and $\delta_1\in (0,1)$ are some constants as in the proof of Thoerem \ref{Thm:4.2}.
\end{Lemma}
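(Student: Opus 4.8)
The plan is to derive the boundary-value-problem decay estimate \eqref{OCP_BVP_decay} from the optimal-control decay hypothesis \eqref{OCP_decay} by transferring back and forth through Theorem~\ref{Thm:4.1} and Theorem~\ref{Thm:2.2}, in exactly the same spirit as the proof of Theorem~\ref{Thm:4.2}. The key observation is that $\mathcal{R}_{h_k}y^{h_k}=y_{h_k}$ and $\mathcal{R}_{h_k}p^{h_k}=p_{h_k}$, and likewise for the subscript $h_{k+1}$, so the left-hand side of \eqref{OCP_BVP_decay} is — up to higher-order perturbations controlled by $\tilde\kappa(h_0)$ — nothing but the optimal-control quantity appearing on the left of \eqref{OCP_decay}. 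Concretely, $\|y^{h_k}-\mathcal{R}_{h_{k+1}}y^{h_k}\|_{a,\Omega}$ differs from $\|y-y_{h_{k+1}}\|_{a,\Omega}$ only by terms of order $\tilde\kappa(h_0)$ times the sum of the four energy errors, by the two-level identity \eqref{two_level_equ_y} (applied with $H=h_k$, $h=h_{k+1}$), and similarly the oscillation term $\mbox{osc}(u_{h_{k+1}}-L\mathcal{R}_{h_{k+1}}y^{h_k},\mathcal{T}_{h_{k+1}})$ matches $\mbox{osc}(u_{h_{k+1}}-Ly_{h_{k+1}},\mathcal{T}_{h_{k+1}})$ by \eqref{two_level_data_y} (here I use that $\mathcal{R}_{h_{k+1}}y^{h_{k+1}}=y_{h_{k+1}}$ and that the difference $y^{h_{k+1}}-y^{h_k}$ solves a BVP with right-hand side $u_{h_{k+1}}-u_{h_k}$, whose energy norm is $O(\tilde\kappa(h_0))$ times the energy errors by \eqref{lift}).

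The steps I would carry out, in order, are as follows. First, rewrite the left side of \eqref{OCP_BVP_decay} using \eqref{two_level_equ_y}, \eqref{two_level_equ_p}, \eqref{two_level_data_y}, \eqref{two_level_data_p}: each term equals the corresponding optimal-control term plus an $O(\tilde\kappa(h_0))$ multiple of $\|(y-y_{h_{k+1}},p-p_{h_{k+1}})\|_a + \|(y-y_{h_k},p-p_{h_k})\|_a$. Apply a $\delta_1$-Young inequality to absorb the cross terms, obtaining an upper bound of the form $(1+\delta_1)\big(\|(y-y_{h_{k+1}},p-p_{h_{k+1}})\|_a^2 + \gamma_*\mbox{osc}^2((y_{h_{k+1}},p_{h_{k+1}}),\mathcal{T}_{h_{k+1}})\big)$ plus a term $C\delta_1^{-1}\tilde\kappa^2(h_0)$ times the sum of the squared energy errors at levels $k$ and $k+1$. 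Second, invoke the hypothesis \eqref{OCP_decay} to dominate that optimal-control quantity by $\beta_*^2$ times the level-$k$ optimal-control quantity. Third, on the right side, run the same two-level identities in reverse to express $\|(y-y_{h_k},p-p_{h_k})\|_a^2$ and $\mbox{osc}^2((y_{h_k},p_{h_k}),\mathcal{T}_{h_k})$ in terms of $\|(y^{h_k}-\mathcal{R}_{h_k}y^{h_k},p^{h_k}-\mathcal{R}_{h_k}p^{h_k})\|_a^2$ and $\mbox{osc}^2((y_{h_k},p_{h_k}),\mathcal{T}_{h_k})$, again up to $O(\tilde\kappa(h_0))$ — but here, since $\mathcal{R}_{h_k}y^{h_k}=y_{h_k}$ exactly, only the energy-error transfer \eqref{two_level_equ_y}--\eqref{two_level_equ_p} is needed and the oscillation terms coincide identically. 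Fourth, use Theorem~\ref{Thm:2.2} (together with \eqref{lift}) to bound $\|(y-y_{h_k},p-p_{h_k})\|_a^2 \le (1+O(\tilde\kappa(h_0)))(\|y^{h_k}-y_{h_k}\|_a^2 + \|p^{h_k}-p_{h_k}\|_a^2)$, so that the BVP quantity genuinely controls the optimal-control quantity from above.

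Finally, collect terms: moving the $C\delta_1^{-1}\tilde\kappa^2(h_0)\|(y-y_{h_{k+1}},p-p_{h_{k+1}})\|_a^2$ contribution to the left and dividing through yields precisely the claimed constants
\begin{eqnarray}
\tilde\beta_*=\Big(\frac{(1+\delta_1)\beta_*^2+C_5\tilde\kappa(h_0)}{1-C_5\delta_1^{-1}\tilde\kappa^2(h_0)}\Big)^{1/2},\qquad \tilde\gamma_*=\frac{\gamma_*}{1-C_5\delta_1^{-1}\tilde\kappa^2(h_0)},\nonumber
\end{eqnarray}
with $C_5$ absorbing $C_*$ (from the oscillation bound of Lemma~\ref{Lm:2.4}, needed when transferring the $L\mathcal{R}_h(y^h-y^H)$ oscillation) together with the constants $\hat C_5,\hat C_6$ from the proof of Theorem~\ref{Thm:4.2}; the requirement $h_0\ll1$ guarantees $1-C_5\delta_1^{-1}\tilde\kappa^2(h_0)>0$ so the division is legitimate. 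The main obstacle is bookkeeping rather than conceptual: one must be careful that in \eqref{OCP_BVP_decay} the oscillation on the left is evaluated at $\mathcal{R}_{h_{k+1}}y^{h_k}$ (not $\mathcal{R}_{h_{k+1}}y^{h_{k+1}}=y_{h_{k+1}}$), so the extra term $\mbox{osc}(u_{h_{k+1}}-u_{h_k}-L\mathcal{R}_{h_{k+1}}(y^{h_{k+1}}-y^{h_k}),\mathcal{T}_{h_{k+1}})$ must be estimated by $O(\tilde\kappa(h_0))$ exactly as in \eqref{y_6}--\eqref{y_7}; tracking that this perturbation is of the right (higher) order — and that it appears with a $\delta_1^{-1}$ but only a $\tilde\kappa^2(h_0)$ on the level-$(k+1)$ side while picking up only a single power $\tilde\kappa(h_0)$ on the level-$k$ side after one more application of Theorem~\ref{Thm:2.2} — is the delicate part that fixes the precise form of $\tilde\beta_*$.
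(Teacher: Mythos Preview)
Your proposal is correct and matches the paper's approach: the paper's own proof simply refers back to the argument of Theorem~\ref{Thm:4.2} run in the reverse direction (deducing \eqref{OCP_BVP_decay} from \eqref{OCP_decay} via Theorems~\ref{Thm:2.2} and~\ref{Thm:4.1}), with the estimator identities \eqref{two_level_estimator_y}--\eqref{two_level_estimator_p} replaced throughout by the oscillation identities \eqref{two_level_data_y}--\eqref{two_level_data_p}. One minor slip to fix when you write it out: the BVP oscillation on the left of \eqref{OCP_BVP_decay} carries the coarse control $u_{h_k}$, not $u_{h_{k+1}}$, since $y^{h_k}=Su_{h_k}$ --- but your final paragraph already handles the resulting $u_{h_{k+1}}-u_{h_k}$ discrepancy via \eqref{y_6}--\eqref{y_7} exactly as needed.
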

\begin{proof}
The proof follows the similar procedure as in the proof of Theorem \ref{Thm:4.2} when (\ref{two_level_estimator_y})-(\ref{two_level_estimator_p}) are replaced by (\ref{two_level_data_y})-(\ref{two_level_data_p}). Specifically, in the proof of Theorem \ref{Thm:4.2} we use (\ref{e_1}), Theorem \ref{Thm:2.2} and Theorem \ref{Thm:4.1} to prove (\ref{convergence_1}). Conversely, here we need to prove (\ref{e_1}) from (\ref{convergence_1}), Theorem \ref{Thm:2.2} and Theorem \ref{Thm:4.1}.
\end{proof}
Next, we are able to derive a result similar to Lemma \ref{Lm:decay} concerning the optimality of D\"{o}rfler's marking strategy for the optimal control problems.
\begin{Corollary}\label{Cor:OCP_decay}
Let $(u_{h_k},y_{h_k},p_{h_k})\in U_{ad}\times V_{h_k}\times V_{h_k}$ and $(u_{h_{k+1}},y_{h_{k+1}},p_{h_{k+1}})\in U_{ad}\times V_{h_{k+1}}\times V_{h_{k+1}}$ be discrete solutions of   problem (\ref{OCP_h})-(\ref{OCP_state_h}) over  mesh $\mathcal{T}_{h_k}$ and its refinement $\mathcal{T}_{h_{k+1}}$ with marked element $\mathcal{M}_{h_k}$. Suppose they satisfy the following property
\begin{eqnarray}
&&\|(y-y_{h_{k+1}},p-p_{h_{k+1}})\|_{a}^2+ \gamma_*\mbox{osc}^2((y_{h_{k+1}},p_{h_{k+1}}),\mathcal{T}_{h_{k+1}})\nonumber\\
&\leq&\beta_*^2(\|(y-y_{h_k},p-p_{h_k})\|_{a}^2+\gamma_*\mbox{osc}^2((y_{h_k},p_{h_k}),\mathcal{T}_{h_k}))\nonumber
\end{eqnarray}
with constants $\gamma_*>0$  and $\beta_*\in (0,\sqrt{1\over 2})$. Then the set $R_{\mathcal{T}_{h_k}\rightarrow\mathcal{T}_{h_{k+1}}}$ of refined elements satisfies the D\"{o}rfler property
\begin{eqnarray}
\sum\limits_{T\in R_{\mathcal{T}_{h_k}\rightarrow\mathcal{T}_{h_{k+1}}}}\eta_{h_k}^2((y_{h_k},p_{h_k}),T)\geq\hat\theta \sum\limits_{T\in\mathcal{T}_{h_k}}\eta_{h_k}^2((y_{h_k},p_{h_k}),T)\label{BVP_Dorfler}
\end{eqnarray}
with $\hat\theta=\frac{\tilde C_2(1-2\tilde\beta_*^2)}{\tilde C_0(\tilde C_1+(1+2C_*^2\tilde C_1)\tilde\gamma_*)}$ and $\tilde C_0=\max(1,\frac{\tilde C_3}{\tilde\gamma_*})$.
\end{Corollary}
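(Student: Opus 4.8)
The plan is to deduce the assertion from the boundary value problem version, Lemma~\ref{Lm:decay}, by regarding $y^{h_k}=Su_{h_k}$ and $p^{h_k}=S^*(S_{h_k}u_{h_k}-y_d)$ as the exact solutions of the coupled source problems (\ref{elliptic_weak})-(\ref{elliptic_weak_adjoint}) with data $f=u_{h_k}$ and $g=y_{h_k}-y_d$. First I would record the elementary identifications on which everything rests: $y_{h_k}=\mathcal{R}_{h_k}y^{h_k}$ and $p_{h_k}=\mathcal{R}_{h_k}p^{h_k}$ are the Galerkin approximations of this BVP system on $\mathcal{T}_{h_k}$, while $\mathcal{R}_{h_{k+1}}y^{h_k}$ and $\mathcal{R}_{h_{k+1}}p^{h_k}$ are its approximations on the refinement $\mathcal{T}_{h_{k+1}}$; and, comparing (\ref{element_res})-(\ref{jump}) with (\ref{element_res_y})-(\ref{jump_p}), for this particular data the BVP error indicators coincide elementwise with the control indicators, i.e. $\tilde\eta_{h_k}(\mathcal{R}_{h_k}y^{h_k},T)=\eta_{y,h_k}(y_{h_k},T)$ and $\tilde\eta_{h_k}(\mathcal{R}_{h_k}p^{h_k},T)=\eta_{p,h_k}(p_{h_k},T)$, with the data oscillations matching as well. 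Hence $\tilde\eta_{h_k}^2(\mathcal{R}_{h_k}y^{h_k},T)+\tilde\eta_{h_k}^2(\mathcal{R}_{h_k}p^{h_k},T)=\eta_{h_k}^2((y_{h_k},p_{h_k}),T)$ for every $T\in\mathcal{T}_{h_k}$.

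Next I would feed the hypothesis of the corollary into Lemma~\ref{Lm:OCP_decay}: since the optimal control energy-plus-oscillation quantity contracts with factor $\beta_*^2$, that lemma yields the decrease (\ref{OCP_BVP_decay}) for the auxiliary BVP system, with contraction factor $\tilde\beta_*^2$ and weight $\tilde\gamma_*$ as defined there. To invoke Lemma~\ref{Lm:decay} I need $\tilde\beta_*^2\in(0,\frac{1}{2})$; because $\beta_*^2<\frac{1}{2}$ by assumption, I would first fix $\delta_1\in(0,1)$ small enough that $(1+\delta_1)\beta_*^2<\frac{1}{2}$ (this $\delta_1$ being the one already entering Lemma~\ref{Lm:OCP_decay}) and then require $h_0\ll 1$, as elsewhere in this section, so that $\tilde\kappa(h_0)$ is small enough to make $\tilde\beta_*^2=\frac{(1+\delta_1)\beta_*^2+C_5\tilde\kappa(h_0)}{1-C_5\delta_1^{-1}\tilde\kappa^2(h_0)}<\frac{1}{2}$. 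With this, (\ref{OCP_BVP_decay}) is precisely the hypothesis (\ref{bvp_decay}) of Lemma~\ref{Lm:decay} applied to the BVP with data $f=u_{h_k}$, $g=y_{h_k}-y_d$, under the identifications $\tilde\beta_0=\tilde\beta_*$ and $\tilde\gamma_0=\tilde\gamma_*$; note that $\tilde C_1,\tilde C_2,\tilde C_3$ and $C_*$ (from Lemma~\ref{Lm:2.4}), and hence $\tilde C_0=\max(1,\tilde C_3/\tilde\gamma_*)$, depend only on $A$, $c$ and the shape regularity, so they are unaffected by the change of data.

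Lemma~\ref{Lm:decay} then gives that the refined set $R_{\mathcal{T}_{h_k}\rightarrow\mathcal{T}_{h_{k+1}}}$ satisfies the D\"{o}rfler property for the BVP indicators $\tilde\eta_{h_k}(\mathcal{R}_{h_k}y^{h_k},\cdot)$ and $\tilde\eta_{h_k}(\mathcal{R}_{h_k}p^{h_k},\cdot)$ with parameter $\frac{\tilde C_2(1-2\tilde\beta_*^2)}{\tilde C_0(\tilde C_1+(1+2C_*^2\tilde C_1)\tilde\gamma_*)}$, and the elementwise identification from the first paragraph converts this verbatim into the claimed estimate (\ref{BVP_Dorfler}) with $\hat\theta$ equal to that parameter. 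The only genuinely delicate point I anticipate is the bookkeeping that forces $\tilde\beta_*^2<\frac{1}{2}$: one must pin down $\delta_1$ (which already plays a role in Lemma~\ref{Lm:OCP_decay} and in the proof of Theorem~\ref{Thm:4.2}) before deciding how small $h_0$ has to be, and check that these requirements are mutually compatible; everything else is a routine combination of Lemma~\ref{Lm:OCP_decay}, Lemma~\ref{Lm:decay}, and the coincidence of the auxiliary-BVP and control indicators.
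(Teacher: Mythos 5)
Your proposal is correct and follows essentially the paper's route: it reduces the claim, via Lemma \ref{Lm:OCP_decay} and the identifications $y_{h_k}=\mathcal{R}_{h_k}y^{h_k}$, $p_{h_k}=\mathcal{R}_{h_k}p^{h_k}$ (so that for the frozen data $f=u_{h_k}$, $g=y_{h_k}-y_d$ the BVP indicators and oscillations coincide elementwise with the control ones), to the D\"{o}rfler-optimality statement for the auxiliary boundary value system with parameters $\tilde\beta_*,\tilde\gamma_*$. The only difference is that you invoke Lemma \ref{Lm:decay} as a black box, whereas the paper unfolds its proof inline (the lower bound of Lemma \ref{Lm: lower_bound}, Galerkin orthogonality, the dominance of the indicator over oscillation, and the localized bound of Lemma \ref{Lm:distance}); your explicit bookkeeping ensuring $\tilde\beta_*^2<\tfrac12$ by choosing $\delta_1$ and $h_0$ is left implicit in the corollary's proof in the paper but matches how these parameters are fixed in its later application.
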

\begin{proof}
From Lemma \ref{Lm:OCP_decay} we can conclude  (\ref{OCP_BVP_decay}) from (\ref{OCP_decay}). Note that $y_{h_k}=\mathcal{R}_{h_k}y^{h_k}$  and $p_{h_k}=\mathcal{R}_{h_k}p^{h_k}$. By the lower bounds in Lemma \ref{Lm: lower_bound} we have 
\begin{eqnarray}
(1-2\tilde\beta_*^2)\tilde C_2\eta_{h_k}^2((y_{h_k},p_{h_k}),\Omega)&\leq& (1-2\tilde\beta_*^2)\Big(\|(y^{h_k}-y_{h_k},p^{h_k}-p_{h_k})\|_{a}^2 +\tilde C_3\mbox{osc}^2((y_{h_k},p_{h_k}),\mathcal{T}_{h_k})\Big)\nonumber\\
&=&(1-2\tilde\beta_*^2)\Big(\|(y^{h_k}-y_{h_k},p^{h_k}-p_{h_k})\|_{a}^2 +\frac{\tilde C_3}{\tilde \gamma_*}\tilde\gamma_*\mbox{osc}^2((y_{h_k},p_{h_k}),\mathcal{T}_{h_k})\Big)\nonumber\\
&\leq&\tilde C_0(1-2\tilde\beta_*^2)\Big(\|(y^{h_k}-y_{h_k},p^{h_k}-p_{h_k})\|_{a}^2 +\tilde\gamma_*\mbox{osc}^2((y_{h_k},p_{h_k}),\mathcal{T}_{h_k})\Big).\nonumber
\end{eqnarray}
Thus, it follows from (\ref{OCP_BVP_decay}) that
\begin{eqnarray}
&&\frac{\tilde C_2}{\tilde C_0}(1-2\tilde\beta_*^2)\sum\limits_{T\in\mathcal{T}_{h_k}}\eta_{h_k}^2((y_{h_k},p_{h_k}),T)\nonumber\\
&\leq&(1-2\tilde\beta_*^2)\Big(\|(y^{h_k}-y_{h_k},p^{h_k}-p_{h_k})\|_{a}^2 +\tilde\gamma_*\mbox{osc}^2((y_{h_k},p_{h_k}),\mathcal{T}_{h_k})\Big)\nonumber\\
&=&\|(y^{h_k}-y_{h_k},p^{h_k}-p_{h_k})\|_{a}^2 +\tilde\gamma_*\mbox{osc}^2((y_{h_k},p_{h_k}),\mathcal{T}_{h_k})\nonumber\\
&&-2\tilde\beta_*^2\Big(\|(y^{h_k}-y_{h_k},p^{h_k}-p_{h_k})\|_{a}^2 +\tilde\gamma_*\mbox{osc}^2((y_{h_k},p_{h_k}),\mathcal{T}_{h_k})\Big)\nonumber\\
&\leq&\|(y^{h_k}-y_{h_k},p^{h_k}-p_{h_k})\|_{a}^2 +\tilde\gamma_*\mbox{osc}^2((y_{h_k},p_{h_k}),\mathcal{T}_{h_k})\nonumber\\
&&-2\Big(\|(y^{h_k}-\mathcal{R}_{h_{k+1}}y^{h_k},p^{h_k}-\mathcal{R}_{h_{k+1}}p^{h_k})\|_{a}^2 +\tilde\gamma_*\mbox{osc}^2((\mathcal{R}_{h_{k+1}}y^{h_k},\mathcal{R}_{h_{k+1}}p^{h_k}),\mathcal{T}_{h_{k+1}})\Big)\nonumber\\
&\leq&\|(y^{h_k}-y_{h_k},p^{h_k}-p_{h_k})\|_{a}^2-\|(y^{h_k}-\mathcal{R}_{h_{k+1}}y^{h_k},p^{h_k}-\mathcal{R}_{h_{k+1}}p^{h_k})\|_{a}^2\nonumber\\
&&+\tilde\gamma_*\Big(\mbox{osc}^2((y_{h_k},p_{h_k}),\mathcal{T}_{h_k})-2\mbox{osc}^2((\mathcal{R}_{h_{k+1}}y^{h_k},\mathcal{R}_{h_{k+1}}p^{h_k}),\mathcal{T}_{h_{k+1}})\Big).\label{T_1}
\end{eqnarray}
Note that $y_{h_k}$ and $\mathcal{R}_{h_{k+1}}y^{h_k}$ are the Galerkin projections of $y^{h_k}$ on $V_{h_k}$ and $V_{h_{k+1}}$, respectively. From the standard Galerkin orthogonality we have
\begin{eqnarray}
&&\|(y^{h_k}-y_{h_k},p^{h_k}-p_{h_k})\|_{a}^2-\|(y^{h_k}-\mathcal{R}_{h_{k+1}}y^{h_k},p^{h_k}-\mathcal{R}_{h_{k+1}}p^{h_k})\|_{a}^2\nonumber\\
&=&\|(y_{h_k}-\mathcal{R}_{h_{k+1}}y^{h_k},p_{h_k}-\mathcal{R}_{h_{k+1}}p^{h_k})\|_{a}^2.\label{T_2}
\end{eqnarray}
By (\ref{osc_bound}), the triangle and the Young inequalities we have
\begin{eqnarray}
\mbox{osc}^2((y_{h_k},p_{h_k}),T)
&\leq& 2 \mbox{osc}^2((\mathcal{R}_{h_{k+1}}y^{h_k},\mathcal{R}_{h_{k+1}}p^{h_k}),T)\nonumber\\
&&+2C_*^2\|(y_{h_k}-\mathcal{R}_{h_{k+1}}y^{h_k},p_{h_k}-\mathcal{R}_{h_{k+1}}p^{h_k})\|_{a}^2,\nonumber
\end{eqnarray}
which together with the dominance of the indicator over oscillation (see \cite[Remark 2.1]{Cascon})
\begin{eqnarray}
\mbox{osc}^2(u_{h_k}-Ly_{h_k},T)&\leq&\eta_{y,h_k}^2(y_{h_k},T),\label{domi_1}\\
\mbox{osc}^2(y_{h_k}-y_d-L^*p_{h_k},T)&\leq&\eta_{p,h_k}^2(p_{h_k},T)\label{domi_2}
\end{eqnarray}
implies
\begin{eqnarray}
&&\mbox{osc}^2((y_{h_k},p_{h_k}),\mathcal{T}_{h_k})-2\mbox{osc}^2((\mathcal{R}_{h_{k+1}}y^{h_k},\mathcal{R}_{h_{k+1}}p^{h_k}),\mathcal{T}_{h_{k+1}})\nonumber\\
&\leq&\sum\limits_{T\in R_{\mathcal{T}_{h_k}\rightarrow\mathcal{T}_{h_{k+1}}}}\mbox{osc}^2((y_{h_k},p_{h_k}),T)+\mbox{osc}^2((y_{h_k},p_{h_k}),\mathcal{T}_{h_k}\cap \mathcal{T}_{h_{k+1}})\nonumber\\
&&-2\mbox{osc}^2((\mathcal{R}_{h_{k+1}}y^{h_k},\mathcal{R}_{h_{k+1}}p^{h_k}),\mathcal{T}_{h_k}\cap\mathcal{T}_{h_{k+1}})\nonumber\\
&\leq&\sum\limits_{T\in R_{\mathcal{T}_{h_k}\rightarrow\mathcal{T}_{h_{k+1}}}}\eta_{h_k}^2((y_{h_k},p_{h_k}),T)+2C_*^2\|(y_{h_k}-\mathcal{R}_{h_{k+1}}y^{h_k},p_{h_k}-\mathcal{R}_{h_{k+1}}p^{h_k})\|_{a}^2\nonumber\\
&\leq&(1+2C_*^2\tilde C_1)\sum\limits_{T\in R_{\mathcal{T}_{h_k}\rightarrow\mathcal{T}_{h_{k+1}}}}\eta_{h_k}^2((y_{h_k},p_{h_k}),T),\label{T_3}
\end{eqnarray}
where we used (\ref{distance}) in the last inequality. Combining (\ref{T_1})-(\ref{T_3}) and (\ref{distance})  we obtain
\begin{eqnarray}
&&\frac{\tilde C_2}{\tilde C_0}(1-2\tilde\beta_*^2)\sum\limits_{T\in\mathcal{T}_{h_k}}\eta_{h_k}^2((y_{h_k},p_{h_k}),T)\nonumber\\
&\leq&(\tilde C_1+(1+2C_*^2\tilde C_1)\tilde\gamma_*)\sum\limits_{T\in R_{\mathcal{T}_{h_k}\rightarrow\mathcal{T}_{h_{k+1}}}}\eta_{h_k}^2((y_{h_k},p_{h_k}),T).\label{T_4}
\end{eqnarray}
By choosing 
\begin{eqnarray}
\hat\theta:=\frac{\frac{\tilde C_2}{\tilde C_0}(1-2\tilde\beta_*^2)}{\tilde C_1+(1+2C_*^2\tilde C_1)\tilde\gamma_*}=\frac{\tilde C_2(1-2\tilde\beta_*^2)}{\tilde C_0(\tilde C_1+(1+2C_*^2\tilde C_1)\tilde\gamma_*)}\nonumber
\end{eqnarray}
we complete the proof.
\end{proof}
\begin{Lemma}
Let $(y,p,y_d)\in \mathcal{A}^s$ and $\mathcal{T}_{h_k}$ ($k\geq 0$) be a sequence of meshes generated by Algorithm \ref{Alg:3.1} starting from the initial mesh $\mathcal{T}_{h_0}$.  Let $\mathcal{T}_{h_{k+1}}=\mbox{REFINE}(\mathcal{T}_{h_k},\mathcal{M}_{h_k})$ where $\mathcal{M}_{h_k}$ is produced by Algorithm \ref{Alg:3.0} with $\theta$ satisfying $\theta\in (0,\frac{ C_2\gamma}{C_3(C_1+(1+2C_*^2 C_1)\gamma)})$. Then
\begin{eqnarray}
\#\mathcal{M}_{h_{k}}
\leq C_5\Big(\|(y-y_{h_k},p-p_{h_k})\|_{a}^2+ \gamma\mbox{osc}^2((y_{h_k},p_{h_k}),\mathcal{T}_{h_k})\Big)^{-{1\over 2s}}|(y,p,y_d)|_s^{1\over s},\label{upper_DOF}
\end{eqnarray}
where the constant $C_5$ depends on the discrepancy between $\theta$ and $\frac{ C_2\gamma}{C_3(C_1+(1+2C_*^2 C_1)\gamma)}$.
\end{Lemma}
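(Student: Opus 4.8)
The plan is to adapt the optimality-of-D\"orfler-marking argument of \cite[Lemma 5.2]{Cascon}, carrying every quantity over to the optimal control setting through the perturbation estimates of Theorems \ref{Thm:2.2} and \ref{Thm:4.1}. Throughout, write $\mathcal{E}_k^2:=\|(y-y_{h_k},p-p_{h_k})\|_{a}^2+\gamma\,\mbox{osc}^2((y_{h_k},p_{h_k}),\mathcal{T}_{h_k})$ for the quantity appearing on the right of (\ref{upper_DOF}). First I would fix a small parameter $\mu\in(0,\tfrac12)$, to be pinned down at the very end, and set $\varepsilon^2:=\mu\,\mathcal{E}_k^2$. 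Since $(y,p,y_d)\in\mathcal{A}^s$, the definition of $|\cdot|_s$ supplies a conforming refinement $\mathcal{T}_\varepsilon$ of $\mathcal{T}_{h_0}$ with $\#\mathcal{T}_\varepsilon-\#\mathcal{T}_{h_0}\lesssim\varepsilon^{-1/s}|(y,p,y_d)|_s^{1/s}$ and $\|(y-y_{\mathcal{T}_\varepsilon},p-p_{\mathcal{T}_\varepsilon})\|_{a}^2+(\gamma+1)\,\mbox{osc}^2((y_{\mathcal{T}_\varepsilon},p_{\mathcal{T}_\varepsilon}),\mathcal{T}_\varepsilon)\le\varepsilon^2$, where $(u_{\mathcal{T}_\varepsilon},y_{\mathcal{T}_\varepsilon},p_{\mathcal{T}_\varepsilon})$ denotes the OCP finite element solution on $\mathcal{T}_\varepsilon$.

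Next I would form the overlay $\mathcal{T}_*:=\mathcal{T}_\varepsilon\oplus\mathcal{T}_{h_k}$, the smallest common conforming refinement, which is again a bisection refinement of $\mathcal{T}_{h_0}$ and obeys the standard overlay bound $\#\mathcal{T}_*-\#\mathcal{T}_{h_k}\le\#\mathcal{T}_\varepsilon-\#\mathcal{T}_{h_0}$. The heart of the proof is to verify that $\mathcal{T}_{h_k}$ and its refinement $\mathcal{T}_*$ meet the energy-decrease hypothesis of Corollary \ref{Cor:OCP_decay}, namely
\[
\|(y-y_{\mathcal{T}_*},p-p_{\mathcal{T}_*})\|_{a}^2+\gamma\,\mbox{osc}^2((y_{\mathcal{T}_*},p_{\mathcal{T}_*}),\mathcal{T}_*)\le\beta_*^2\,\mathcal{E}_k^2
\]
with $\beta_*^2=C\mu$, which can be made smaller than $\tfrac12$ by choosing $\mu$ small. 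Because $\mathcal{T}_*$ refines $\mathcal{T}_\varepsilon$, this reduces to a quasi-monotonicity of the OCP quasi-error under refinement: I would lift the energy errors and the data oscillations of the control problem on $\mathcal{T}_\varepsilon$ and on $\mathcal{T}_*$ to the associated boundary-value quantities — the Galerkin errors of $y^{h}=Su_h$, $p^{h}=S^*(S_hu_h-y_d)$ and the oscillations of $u_h-Ly_h$, $y_h-y_d-L^*p_h$ — via (\ref{equivalence_1})--(\ref{equivalence_2}) and (\ref{two_level_data_y})--(\ref{two_level_data_p}). For those boundary-value quantities the standard monotone decay of the Galerkin energy error and the oscillation-reduction estimate (using the dominance of the indicator over the oscillation together with the localized bound (\ref{distance}) of Lemma \ref{Lm:distance}) give that the boundary-value quasi-error squared on $\mathcal{T}_*$ is bounded by that on $\mathcal{T}_\varepsilon$, hence by $\varepsilon^2=\mu\,\mathcal{E}_k^2$, up to $O(\tilde\kappa(h_0))$ perturbation terms that are absorbed since $h_0\ll1$; this is also exactly where the extra slack between the factor $(\gamma+1)$ used in the definition of $\mathcal{A}^s$ and the factor $\gamma$ in $\mathcal{E}_k$ is consumed, and where the consistency of $\gamma$ from (\ref{gamma}) with the relations (\ref{C123}) between $C_i$ and $\tilde C_i$ is used.

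Granting this, Corollary \ref{Cor:OCP_decay} applied with $\mathcal{T}_{h_{k+1}}=\mathcal{T}_*$, $\gamma_*=\gamma$ and $\beta_*^2=C\mu$ shows that the set $R_{\mathcal{T}_{h_k}\to\mathcal{T}_*}$ of refined elements satisfies the D\"orfler property with parameter $\hat\theta=\hat\theta(\mu)$, and $\hat\theta(\mu)$ tends, as $\mu\to0$ (so $\tilde\beta_*\to0$, $\tilde\gamma_*\to\gamma$), to $\tfrac{\tilde C_2\gamma}{\tilde C_3(\tilde C_1+(1+2C_*^2\tilde C_1)\gamma)}$, which by (\ref{C123}) agrees with the threshold $\tfrac{C_2\gamma}{C_3(C_1+(1+2C_*^2C_1)\gamma)}$ up to $O(\tilde\kappa(h_0))$. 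Since $\theta$ is assumed strictly below that threshold, one can now fix $\mu$ so small — quantitatively controlled by the gap between $\theta$ and the threshold — that $\hat\theta(\mu)\ge\theta$. Then $R_{\mathcal{T}_{h_k}\to\mathcal{T}_*}$ also satisfies the $\theta$-D\"orfler condition of Algorithm \ref{Alg:3.0}, and since $\mathcal{M}_{h_k}$ has minimal cardinality among all subsets satisfying that condition,
\[
\#\mathcal{M}_{h_k}\le\#R_{\mathcal{T}_{h_k}\to\mathcal{T}_*}\le\#\mathcal{T}_*-\#\mathcal{T}_{h_k}\le\#\mathcal{T}_\varepsilon-\#\mathcal{T}_{h_0}\lesssim\varepsilon^{-1/s}|(y,p,y_d)|_s^{1/s}=\mu^{-1/(2s)}\mathcal{E}_k^{-1/s}|(y,p,y_d)|_s^{1/s},
\]
which is the claimed bound (\ref{upper_DOF}) with $C_5$ absorbing the overlay constant and the factor $\mu^{-1/(2s)}$, hence depending on the discrepancy between $\theta$ and its threshold.

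The step I expect to be the main obstacle is the quasi-monotonicity used above: unlike for a scalar boundary value problem, $y_{\mathcal{T}}$ and $p_{\mathcal{T}}$ are not the Galerkin projections of the continuous $y$ and $p$, so the comparison of the OCP quasi-error on $\mathcal{T}_\varepsilon$ with that on the finer overlay $\mathcal{T}_*$ has to be routed entirely through the boundary-value equivalences and the two-level estimates of Theorem \ref{Thm:4.1}, and one must check that the accumulated $O(\tilde\kappa(h_0))$ contributions, together with the $C_i$-versus-$\tilde C_i$ discrepancies, are genuinely of lower order so that the requirement $h_0\ll1$ restores the clean inequalities. This is careful bookkeeping of perturbations rather than a new idea, but it is the delicate point.
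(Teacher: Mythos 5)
Your proposal is correct and follows essentially the same route as the paper's proof: choosing $\varepsilon^2$ as a small multiple of the current quasi-error, invoking the definition of $\mathcal{A}^s$, overlaying $\mathcal{T}_\varepsilon$ with $\mathcal{T}_{h_k}$, establishing the quasi-error decrease on the overlay by routing through the boundary-value quantities (Galerkin orthogonality, the $C_*$ oscillation bound, and the Theorem \ref{Thm:4.1}/\ref{Thm:4.2}-type perturbation bookkeeping with the $(\gamma+1)$-versus-$\gamma$ slack), then applying Corollary \ref{Cor:OCP_decay} and the minimality of the D\"orfler marking to bound $\#\mathcal{M}_{h_k}$ by $\#\mathcal{T}_\varepsilon-\#\mathcal{T}_{h_0}$. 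The step you flag as the main obstacle is exactly the one the paper handles the same way (its parameters $\rho,\rho_1,\delta_1$ play the role of your $\mu$), so no substantive difference remains.
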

\begin{proof}
Let $\rho,\rho_1\in (0,1)$ satisfy $\rho_1\in (0,\rho)$ and 
\begin{eqnarray}
\theta<\frac{ C_2\gamma}{C_3(C_1+(1+2C_*^2 C_1)\gamma)}(1-\rho^2).\nonumber
\end{eqnarray}
Choose $\delta_1\in (0,1)$ to satisfy (\ref{delta_1}) and
\begin{eqnarray}
(1+\delta_1)^2\rho_1^2\leq \rho^2,\label{T_5}
\end{eqnarray}
 which implies
\begin{eqnarray}
(1+\delta_1)\rho_1^2<1.\label{T_6}
\end{eqnarray}

Set 
\begin{eqnarray}
\varepsilon = \frac{1}{\sqrt{2}}\rho_1\Big(\|(y-y_{h_k},p-p_{h_k})\|_{a}^2+ \gamma\mbox{osc}^2((y_{h_k},p_{h_k}),\mathcal{T}_{h_k})\Big)^{1\over 2}\nonumber
\end{eqnarray}
and let $\mathcal{T}_{h_\varepsilon}$  be a refinement of $\mathcal{T}_{h_0}$ with minimal degrees of freedom satisfying 
\begin{eqnarray}
\|(y-y_{h_\varepsilon},p-p_{h_\varepsilon})\|_{a}^2+ (\gamma+1)\mbox{osc}^2((y_{h_\varepsilon},p_{h_\varepsilon}),\mathcal{T}_{h_\varepsilon})\leq\varepsilon^2.\label{T_7}
\end{eqnarray}
We can conclude from the definition of $\mathcal{A}^s$ that
\begin{eqnarray}
\#\mathcal{T}_{h_\varepsilon}-\#\mathcal{T}_{h_0}\lesssim\varepsilon^{-{1\over s}} |(y,p,y_d)|_s^{1\over s}.\nonumber
\end{eqnarray}
Let $\mathcal{T}_{h_*}:=\mathcal{T}_{h_\varepsilon}\oplus\mathcal{T}_{h_k}$ be the smallest common refinement of $\mathcal{T}_{h_\varepsilon}$ and $\mathcal{T}_{h_k}$. Let $V_{h_\varepsilon}\subset H_0^1(\Omega)$ and $V_{h_*}\subset H_0^1(\Omega)$ be the finite element spaces defined on $\mathcal{T}_{h_\varepsilon}$ and $\mathcal{T}_{h_*}$, respectively. Assume that $(u_{h_\varepsilon},y_{h_\varepsilon},p_{h_\varepsilon})\in U_{ad}\times V_{h_\varepsilon}\times V_{h_\varepsilon}$ is the solution of  problem (\ref{OCP_h})-(\ref{OCP_state_h}). 

Define $y^{h_\varepsilon}:=Su_{h_\varepsilon}$ and $p^{h_\varepsilon}:=S^*(S_{h_\varepsilon}u_{h_\varepsilon}-y_d)$. From the definition of oscillation we can conclude from Lemma \ref{Lm:2.4} that
\begin{eqnarray}
\mbox{osc}(u_{h_\varepsilon}-L\mathcal{R}_{h_*}y^{h_\varepsilon},\mathcal{T}_{h_*})&\leq&\mbox{osc}(u_{h_\varepsilon}-L\mathcal{R}_{h_\varepsilon}y^{h_\varepsilon},\mathcal{T}_{h_*})+\mbox{osc}(L(\mathcal{R}_{h_*}-\mathcal{R}_{h_\varepsilon})y^{h_\varepsilon},\mathcal{T}_{h_*})\nonumber\\
&\leq&\mbox{osc}(u_{h_\varepsilon}-L\mathcal{R}_{h_\varepsilon}y^{h_\varepsilon},\mathcal{T}_{h_*})+C_*\|(\mathcal{R}_{h_*}-\mathcal{R}_{h_\varepsilon})y^{h_\varepsilon}\|_{a}\nonumber
\end{eqnarray}
and 
\begin{eqnarray}
\mbox{osc}(y_{h_\varepsilon}-y_d-L^*\mathcal{R}_{h_*}p^{h_\varepsilon},\mathcal{T}_{h_*})&\leq&\mbox{osc}(y_{h_\varepsilon}-y_d-L^*\mathcal{R}_{h_\varepsilon}p^{h_\varepsilon},\mathcal{T}_{h_*})+\mbox{osc}(L^*(\mathcal{R}_{h_*}-\mathcal{R}_{h_\varepsilon})p^{h_\varepsilon},\mathcal{T}_{h_*})\nonumber\\
&\leq&\mbox{osc}(y_{h_\varepsilon}-y_d-L^*\mathcal{R}_{h_\varepsilon}p^{h_\varepsilon},\mathcal{T}_{h_*})+C_*\|(\mathcal{R}_{h_*}-\mathcal{R}_{h_\varepsilon})p^{h_\varepsilon})\|_{a}.\nonumber
\end{eqnarray}
Then from the Young's inequality we have
\begin{eqnarray}
\mbox{osc}^2((\mathcal{R}_{h_*}y^{h_\varepsilon},\mathcal{R}_{h_*}p^{h_\varepsilon}),\mathcal{T}_{h_*})&\leq&2\mbox{osc}^2((\mathcal{R}_{h_\varepsilon}y^{h_\varepsilon},\mathcal{R}_{h_\varepsilon}p^{h_\varepsilon}),\mathcal{T}_{h_*})\nonumber\\
&&+2C_*^2\|((\mathcal{R}_{h_*}-\mathcal{R}_{h_\varepsilon})y^{h_\varepsilon},(\mathcal{R}_{h_*}-\mathcal{R}_{h_\varepsilon})p^{h_\varepsilon}))\|_{a}^2.\nonumber
\end{eqnarray}
Due to the orthogonality 
\begin{eqnarray}
\|(y^{h_\varepsilon}-\mathcal{R}_{h_*}y^{h_\varepsilon},p^{h_\varepsilon}-\mathcal{R}_{h_*}p^{h_\varepsilon})\|_{a}^2&=&\|(y^{h_\varepsilon}-\mathcal{R}_{h_\varepsilon}y^{h_\varepsilon},p^{h_\varepsilon}-\mathcal{R}_{h_\varepsilon}p^{h_\varepsilon})\|_{a}^2\nonumber\\
&&-\|((\mathcal{R}_{h_*}-\mathcal{R}_{h_\varepsilon})y^{h_\varepsilon},(\mathcal{R}_{h_*}-\mathcal{R}_{h_\varepsilon})p^{h_\varepsilon}))\|_{a}^2,\nonumber
\end{eqnarray}
we arrive at
\begin{eqnarray}
&&\|(y^{h_\varepsilon}-\mathcal{R}_{h_*}y^{h_\varepsilon},p^{h_\varepsilon}-\mathcal{R}_{h_*}p^{h_\varepsilon})\|_{a}^2+\frac{1}{2C_*^2}\mbox{osc}^2((\mathcal{R}_{h_*}y^{h_\varepsilon},\mathcal{R}_{h_*}p^{h_\varepsilon}),\mathcal{T}_{h_*})\nonumber\\
&\leq&\|(y^{h_\varepsilon}-\mathcal{R}_{h_\varepsilon}y^{h_\varepsilon},p^{h_\varepsilon}-\mathcal{R}_{h_\varepsilon}p^{h_\varepsilon})\|_{a}^2+\frac{1}{C_*^2}\mbox{osc}^2((\mathcal{R}_{h_\varepsilon}y^{h_\varepsilon},\mathcal{R}_{h_\varepsilon}p^{h_\varepsilon}),\mathcal{T}_{h_*}).\nonumber
\end{eqnarray}
From Theorem \ref{Thm:elliptic_convergence} we can see that $\tilde\gamma\leq \frac{1}{2C_*^2}$, which implies
\begin{eqnarray}
&&\|(y^{h_\varepsilon}-\mathcal{R}_{h_*}y^{h_\varepsilon},p^{h_\varepsilon}-\mathcal{R}_{h_*}p^{h_\varepsilon})\|_{a}^2+\tilde\gamma\mbox{osc}^2((\mathcal{R}_{h_*}y^{h_\varepsilon},\mathcal{R}_{h_*}p^{h_\varepsilon}),\mathcal{T}_{h_*})\nonumber\\
&\leq&\|(y^{h_\varepsilon}-\mathcal{R}_{h_\varepsilon}y^{h_\varepsilon},p^{h_\varepsilon}-\mathcal{R}_{h_\varepsilon}p^{h_\varepsilon})\|_{a}^2+\frac{1}{C_*^2}\mbox{osc}^2((\mathcal{R}_{h_\varepsilon}y^{h_\varepsilon},\mathcal{R}_{h_\varepsilon}p^{h_\varepsilon}),\mathcal{T}_{h_*})\nonumber\\
&\leq&\|(y^{h_\varepsilon}-\mathcal{R}_{h_\varepsilon}y^{h_\varepsilon},p^{h_\varepsilon}-\mathcal{R}_{h_\varepsilon}p^{h_\varepsilon})\|_{a}^2+(\tilde\gamma+\sigma)\mbox{osc}^2((\mathcal{R}_{h_\varepsilon}y^{h_\varepsilon},\mathcal{R}_{h_\varepsilon}p^{h_\varepsilon}),\mathcal{T}_{h_*})\nonumber
\end{eqnarray}
with $\sigma = \frac{1}{C_*^2}-\tilde\gamma\in (0,1)$. Following the similar procedure as in the proof of Theorem \ref{Thm:4.2} when (\ref{two_level_estimator_y})-(\ref{two_level_estimator_p}) are replaced by (\ref{two_level_data_y})-(\ref{two_level_data_p}), we are led to
\begin{eqnarray}
&&\|(y-y_{h_*},p-p_{h_*})\|_{a}^2+ \gamma\mbox{osc}^2((y_{h_*},p_{h_*}),\mathcal{T}_{h_*})\nonumber\\
&\leq&\beta_0^2\Big(\|(y-y_{h_\varepsilon},p-p_{h_\varepsilon})\|_{a}^2+ (\gamma+\sigma)\mbox{osc}^2((y_{h_\varepsilon},p_{h_\varepsilon}),\mathcal{T}_{h_\varepsilon})\Big)\nonumber\\
&\leq&\beta_0^2\Big(\|(y-y_{h_\varepsilon},p-p_{h_\varepsilon})\|_{a}^2+ (\gamma+1)\mbox{osc}^2((y_{h_\varepsilon},p_{h_\varepsilon}),\mathcal{T}_{h_\varepsilon})\Big),\label{T_8}
\end{eqnarray}
where 
\begin{eqnarray}
\beta_0:=\Big(\frac{(1+\delta_1)+C_4\tilde \kappa(h_0)}{1-C_4\delta_1^{-1}\tilde \kappa^2(h_0)}\Big)^{1\over 2}\nonumber
\end{eqnarray}
and $C_4$ is the constant appeared in the proof of Theorem \ref{Thm:4.2}. Thus, by (\ref{T_7}) and (\ref{T_8}) it follows
\begin{eqnarray}
&&\|(y-y_{h_*},p-p_{h_*})\|_{a}^2+ \gamma\mbox{osc}^2((y_{h_*},p_{h_*}),\mathcal{T}_{h_*})\nonumber\\
&\leq&\beta_1^2\Big(\|(y-y_{h_k},p-p_{h_k})\|_{a}^2+ \gamma\mbox{osc}^2((y_{h_k},p_{h_k}),\mathcal{T}_{h_k})\Big)
\end{eqnarray}
with $\beta_1=\frac{1}{\sqrt{2}}\beta_0\rho_1$.

In view of (\ref{T_6}) we have $\beta_1^2\in(0,{1\over 2})$ provided $h_0\ll 1$. It follows from Corollary \ref{Cor:OCP_decay} that
\begin{eqnarray}
\sum\limits_{T\in R_{\mathcal{T}_{h_k}\rightarrow\mathcal{T}_{h_*}}}\eta_{h_k}^2((y_{h_k},p_{h_k}),T)\geq\theta_1 \sum\limits_{T\in\mathcal{T}_{h_k}}\eta_{h_k}^2((y_{h_k},p_{h_k}),T),
\end{eqnarray}
where $\theta_1=\frac{\tilde C_2(1-2\tilde\beta_1^2)}{\tilde C_5(\tilde C_1+(1+2C_*^2\tilde C_1)\tilde\gamma_1)}$, $\tilde\gamma_1=\frac{\gamma}{1-C_5\delta_1^{-1}\tilde \kappa^2(h_0)}$, $\tilde C_5=\max(1,\frac{\tilde C_3}{\tilde\gamma_1})$ and
\begin{eqnarray}
\tilde\beta_1=\Big(\frac{(1+\delta_1)\beta_1^2+C_5\tilde \kappa(h_0)}{1-C_5\delta_1^{-1}\tilde \kappa^2(h_0)}\Big)^{1\over 2}.\nonumber
\end{eqnarray}
It follows from the definition of $\gamma$ in (\ref{tilde_gamma}) and $\tilde \gamma$ in (\ref{gamma}) that $\tilde\gamma_1<1$, which together with $\tilde C_3>1$ (see \cite{Dai}) implies $\tilde C_5=\frac{\tilde C_3}{\tilde\gamma_1}$. Since $h_0\ll 1$, we obtain that $\tilde\gamma_1>\gamma$ and $\tilde\beta_1\in (0,\frac{1}{\sqrt{2}}\rho)$ from (\ref{T_5}). It is easy to see from (\ref{C123}) and $\tilde\gamma_1>\gamma$ that
\begin{eqnarray}
\theta_1&=&\frac{\tilde C_2(1-2\tilde\beta_1^2)}{\frac{\tilde C_3}{\tilde\gamma_1}(\tilde C_1+(1+2C_*^2\tilde C_1)\tilde\gamma_1)}\geq\frac{\tilde C_2}{\tilde C_3(\frac{\tilde C_1}{\tilde\gamma_1}+1+2C_*^2\tilde C_1)} (1-\rho^2)\nonumber\\
&=&\frac{ C_2(1+\hat C_2\tilde\kappa^2(h_0))}{C_3(1+\hat C_2\tilde\kappa^2(h_0))(\frac{C_1(1-\hat C_1\tilde\kappa^2(h_0))}{2\tilde\gamma_1}+1+C_*^2 C_1(1-\hat C_1\tilde\kappa^2(h_0)))} (1-\rho^2)\nonumber\\
&\geq&\frac{ C_2}{C_3(\frac{C_1}{\gamma}+1+2C_*^2 C_1)} (1-\rho^2)=\frac{ C_2\gamma}{C_3(C_1+(1+2C_*^2 C_1)\gamma)} (1-\rho^2)>\theta,
\end{eqnarray}
provided $h_0\ll 1$. This implies
\begin{eqnarray}
\sum\limits_{T\in R_{\mathcal{T}_{h_k}\rightarrow\mathcal{T}_{h_*}}}\eta_{h_k}^2((y_{h_k},p_{h_k}),T)>\theta \sum\limits_{T\in\mathcal{T}_{h_k}}\eta_{h_k}^2((y_{h_k},p_{h_k}),T).\nonumber
\end{eqnarray}

Note that Algorithm \ref{Alg:3.0} selects a minimal set $\mathcal{M}_{h_k}=\tilde{\mathcal{T}}_{h_k}$ satisfying
\begin{eqnarray}
\sum\limits_{T\in \mathcal{M}_{h_k}}\eta_{h_k}^2((y_{h_k},p_{h_k}),T)\geq\theta \sum\limits_{T\in\mathcal{T}_{h_k}}\eta_{h_k}^2((y_{h_k},p_{h_k}),T).\nonumber
\end{eqnarray}
Thus,
\begin{eqnarray}
\#\mathcal{M}_{h_k}&\leq& \#R_{\mathcal{T}_{h_k}\rightarrow\mathcal{T}_{h_*}}\leq \#\mathcal{T}_{h_*}-\#\mathcal{T}_{h_k}\leq \#\mathcal{T}_{h_\varepsilon}-\#\mathcal{T}_{h_0}\nonumber\\
&\leq&(\frac{1}{\sqrt{2}}\rho_1)^{-{1\over s}}\Big(\|(y-y_{h_k},p-p_{h_k})\|_{a}^2+ \gamma\mbox{osc}^2((y_{h_k},p_{h_k}),\mathcal{T}_{h_k})\Big)^{-{1\over 2s}} |(y,p,y_d)|_s^{1\over s},\nonumber
\end{eqnarray}
which is the desired result with an explicit dependance on the discrepancy between $\theta$ and $\frac{ C_2\gamma}{C_3(C_1+(1+2C_*^2 C_1)\gamma)}$.
\end{proof}

We are now ready to prove that Algorithm \ref{Alg:3.1} possesses optimal complexity for the state and adjoint state approximations.
\begin{Theorem}\label{Thm:complexity}
Let $(u,y,p)\in U_{ad}\times H_0^1(\Omega)\times H_0^1(\Omega)$ be the solution of problem (\ref{OCP})-(\ref{OCP_state}) and $(u_{h_n},y_{h_n},p_{h_n})\in U_{ad}\times V_{h_n}\times V_{h_n}$ be a sequence of solutions of problem (\ref{OCP_h})-(\ref{OCP_state_h}) corresponding to a sequence of finite element spaces $V_{h_n}$ with partitions $\mathcal{T}_{h_n}$ produced by Algorithm \ref{Alg:3.1}. Then the $n$-th
iterate solution $(y_{h_n},p_{h_n})$ of Algorithm \ref{Alg:3.1} satisfies the optimal bound
\begin{eqnarray}
\|(y-y_{h_n},p-p_{h_n})\|_{a}^2+ \gamma\mbox{osc}^2((y_{h_n},p_{h_n}),\mathcal{T}_{h_n})
\lesssim (\#\mathcal{T}_{h_n}-\#\mathcal{T}_{h_0})^{-2s},\label{optimality}
\end{eqnarray}
where the hidden constant depends on the exact solution $(u,y,p)$ and the discrepancy between $\theta$ and $\frac{ C_2\gamma}{C_3(C_1+(1+2C_*^2 C_1)\gamma)}$.
\end{Theorem}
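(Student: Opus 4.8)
The plan is to assemble three ingredients that are already in place: the quasi-error contraction for the control problem (Theorem \ref{Thm:4.2}), the quasi-optimality of D\"orfler's marking for OCPs together with the cardinality bound
\[
\#\mathcal{M}_{h_k}\ \lesssim\ \Big(\|(y-y_{h_k},p-p_{h_k})\|_{a}^2+\gamma\,\mbox{osc}^2((y_{h_k},p_{h_k}),\mathcal{T}_{h_k})\Big)^{-\frac{1}{2s}}|(y,p,y_d)|_s^{\frac1s}
\]
(the preceding lemma, which itself rests on Corollary \ref{Cor:OCP_decay} and on the two-level perturbation estimates of Theorem \ref{Thm:4.1}), and the refinement complexity estimate of Lemma \ref{Lm:cardi}. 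Throughout I fix $\theta$ in the admissible range $\big(0,\tfrac{C_2\gamma}{C_3(C_1+(1+2C_*^2C_1)\gamma)}\big)$ demanded by that lemma, and I assume $h_0\ll 1$ so that the lemma and the contraction apply along the entire sequence generated by Algorithm \ref{Alg:3.1}.

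First I would record a geometric decay of the \emph{total error} $e_k^2:=\|(y-y_{h_k},p-p_{h_k})\|_{a}^2+\gamma\,\mbox{osc}^2((y_{h_k},p_{h_k}),\mathcal{T}_{h_k})$. By the dominance of the indicators over oscillation, (\ref{domi_1})--(\ref{domi_2}), one has $e_k^2\le \|(y-y_{h_k},p-p_{h_k})\|_{a}^2+\gamma\,\eta_{h_k}^2((y_{h_k},p_{h_k}),\Omega)$, while the a posteriori lower bound (\ref{u_lower}) yields the converse inequality up to a constant $C_{\mathrm{eq}}=C_{\mathrm{eq}}(C_1,C_2,C_3,\gamma)$; hence the quasi-error of Theorem \ref{Thm:4.2} is equivalent to $e_k^2$. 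Iterating (\ref{convergence}) therefore gives $e_m^2\le C_{\mathrm{eq}}\,\beta^{2(m-k)}e_k^2$ for all $m\ge k$, equivalently $e_k^{-1/s}\le C_{\mathrm{eq}}^{1/(2s)}\,\beta^{(m-k)/s}\,e_m^{-1/s}$.

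Next I would sum the cardinality bound over $k=0,\dots,n-1$ and invoke Lemma \ref{Lm:cardi}, obtaining
\[
\#\mathcal{T}_{h_n}-\#\mathcal{T}_{h_0}\ \lesssim\ \sum_{k=0}^{n-1}\#\mathcal{M}_{h_k}\ \lesssim\ |(y,p,y_d)|_s^{1/s}\sum_{k=0}^{n-1}e_k^{-1/s}.
\]
Using the decay from the previous step with $m=n$ and summing the geometric series $\sum_{k=0}^{n-1}\beta^{(n-k)/s}\le \beta^{1/s}/(1-\beta^{1/s})$, the right-hand side is $\lesssim |(y,p,y_d)|_s^{1/s}\,e_n^{-1/s}$. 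Rearranging gives $e_n^2\lesssim |(y,p,y_d)|_s^{2}\,(\#\mathcal{T}_{h_n}-\#\mathcal{T}_{h_0})^{-2s}$, which is precisely (\ref{optimality}) (the case $n=0$ being trivial), with the hidden constant depending on $(u,y,p)$ and on the discrepancy between $\theta$ and the threshold.

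Granting the preceding lemmas, this last assembly is the routine summation argument of \cite{Cascon}, so the genuine difficulty lies upstream: proving that D\"orfler's marking applied to the \emph{sum} $\eta_{y,h}^2+\eta_{p,h}^2$ of the state and adjoint indicators is quasi-optimal for the OCP (Corollary \ref{Cor:OCP_decay} via Lemma \ref{Lm:OCP_decay}), and transferring the optimal-marking/cardinality estimate from the perturbed state and adjoint boundary value problems back to the control problem while keeping every perturbation term of order $\tilde\kappa(h_0)$ absorbable. The smallness hypothesis $h_0\ll 1$ is exactly what renders each factor $1-C\,\tilde\kappa^2(h_0)$ positive and each threshold strict; one should also use that $\mathcal{A}^s$ is independent of the weight $\gamma$ (as noted after its definition), so that the same approximation class controls both the marking lemma and the final bound.
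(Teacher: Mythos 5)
Your proposal is correct and follows essentially the same route as the paper's proof: sum the marked-element cardinality bound via Lemma \ref{Lm:cardi}, pass between the total error and the quasi-error using (\ref{u_lower}) and (\ref{domi_1})--(\ref{domi_2}), iterate the contraction (\ref{convergence}) and sum the geometric series, then rearrange. The only cosmetic difference is that you state the two-sided equivalence of total error and quasi-error once up front, whereas the paper converts in one direction at the start of the sum and back at the end; the mathematics is identical.
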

\begin{proof}
It follows from (\ref{cardi}) and (\ref{upper_DOF}) that
\begin{eqnarray}
&&\#\mathcal{T}_{h_n}-\#\mathcal{T}_{h_0}\lesssim\sum\limits_{k=0}^{n-1}\#\mathcal{M}_{h_k}\nonumber\\
&\lesssim&\sum\limits_{k=0}^{n-1}\Big(\|(y-y_{h_k},p-p_{h_k})\|_{a}^2+ \gamma\mbox{osc}^2((y_{h_k},p_{h_k}),\mathcal{T}_{h_k})\Big)^{-{1\over 2s}} |(y,p,y_d)|_s^{1\over s}.\label{Z_1}
\end{eqnarray}
From the lower bound (\ref{u_lower}) we have
\begin{eqnarray}
\|(y-y_{h_k},p-p_{h_k})\|_{a}^2+ \gamma\eta^2_{h_k}((y_{h_k},p_{h_k}),\Omega)\leq C_6\Big(\|(y-y_{h_k},p-p_{h_k})\|_{a}^2+ \gamma\mbox{osc}^2((y_{h_k},p_{h_k}),\mathcal{T}_{h_k})\Big),\nonumber
\end{eqnarray}
where $C_6=\max(1+\frac{\gamma}{C_2},\frac{ C_3}{C_2})$. Then we arrive at
\begin{eqnarray}
\#\mathcal{T}_{h_n}-\#\mathcal{T}_{h_0}
\lesssim\sum\limits_{k=0}^{n-1}\Big(\|(y-y_{h_k},p-p_{h_k})\|_{a}^2+ \gamma\eta_{h_k}^2((y_{h_k},p_{h_k}),\Omega)\Big)^{-{1\over 2s}} |(y,p,y_d)|_s^{1\over s}.\label{Z_2}
\end{eqnarray}
Due to (\ref{convergence}), we obtain for $0\leq k< n$ that
\begin{eqnarray}
\|(y-y_{h_n},p-p_{h_n})\|_{a}^2+ \gamma\eta_{h_n}^2((y_{h_n},p_{h_n}),\Omega)\leq \beta^{2(n-k)}\Big(\|(y-y_{h_k},p-p_{h_k})\|_{a}^2+ \gamma\eta_{h_k}^2((y_{h_k},p_{h_k}),\Omega)\Big).\nonumber
\end{eqnarray}
Thus, 
\begin{eqnarray}
\#\mathcal{T}_{h_n}-\#\mathcal{T}_{h_0}
&\lesssim&\Big(\|(y-y_{h_n},p-p_{h_n})\|_{a}^2+ \gamma\eta_{h_n}^2((y_{h_n},p_{h_n}),\Omega)\Big)^{-{1\over 2s}} |(y,p,y_d)|_s^{1\over s}\sum\limits_{k=0}^{n-1}\beta^{\frac{n-k}{s}}\nonumber\\
&\lesssim&\Big(\|(y-y_{h_n},p-p_{h_n})\|_{a}^2+ \gamma\eta_{h_n}^2((y_{h_n},p_{h_n}),\Omega)\Big)^{-{1\over 2s}} |(y,p,y_d)|_s^{1\over s},\label{Z_3}
\end{eqnarray}
where the last inequality holds due to the fact that $\beta<1$. 

From (\ref{domi_1})-(\ref{domi_2}) we have
\begin{eqnarray}
 \mbox{osc}^2((y_{h_n},p_{h_n}),\mathcal{T}_{h_n})\leq\eta_{h_n}^2((y_{h_n},p_{h_n}),\Omega),\nonumber
\end{eqnarray}
which together with (\ref{Z_3}) yields
\begin{eqnarray}
\#\mathcal{T}_{h_n}-\#\mathcal{T}_{h_0}
\lesssim\Big(\|(y-y_{h_n},p-p_{h_n})\|_{a}^2+ \gamma\mbox{osc}^2((y_{h_n},p_{h_n}),\mathcal{T}_{h_n})\Big)^{-{1\over 2s}},\label{Z_4}
\end{eqnarray}
 this completes the proof.
\end{proof}
\begin{Remark}\label{Re:control_conv}
From (\ref{lift}) and the equivalence property (\ref{u_equivalence}) we can conclude that Theorem \ref{Thm:4.2} also implies the convergence of $\|u-u_{h_k}\|_{0,\Omega}$, namely, for the $n$-th iterate solution $u_{h_n}$ of Algorithm \ref{Alg:3.1} there holds
\begin{eqnarray}
\|u-u_{h_n}\|_{0,\Omega}^2\lesssim \beta^{2n}. 
\end{eqnarray}
We remark that the control variable can also be included into the complexity analysis of AFEM for optimal control problems to obtain
\begin{eqnarray}
\|u-u_{h_n}\|_{0,\Omega}^2\lesssim (\#\mathcal{T}_{h_n}-\#\mathcal{T}_{h_0})^{-2s}. 
\end{eqnarray}
However, the above results are sub-optimal for the optimal control as illustrated by the numerical results in Section 6.  To prove the optimality of AFEM for control variable it seems that we need to work with AFEM based on $L^2$-norm error estimators, we refer to \cite{Hinze05COAP} for optimal a priori error estimate. We expect that the results in \cite{Demlow} will enable us to prove the optimal convergence of AFEM for the optimal control $u$, this will be postponed to future work.
\end{Remark}

\section{Numerical experiments}
\setcounter{equation}{0}
In this section we carry out some numerical tests in two dimensions to support our theoretical results obtained in this paper. We take the elliptic operator $L$ as $-\Delta$ with homogeneous Dirichlet boundary condition for all the examples.

\begin{Example}\label{Exm:1}
This example is taken from \cite{Apel}. The domain $\Omega$ can be described in polar coordinates by
\begin{eqnarray}
\Omega=\{(r,\vartheta),\ \ 1<r<1,\ \ \ 0<\vartheta<{3\over 2}\pi\}.\nonumber
\end{eqnarray}
We take the exact solutions as
\begin{eqnarray}
y(r,\vartheta)&=&(r^\lambda-r^{\nu_1})\sin(\lambda\vartheta),\nonumber\\
p(r,\vartheta)&=&\alpha(r^\lambda-r^{\nu_2})\sin(\lambda\vartheta),\nonumber\\
u(r,\vartheta)&=&P_{U_{ad}}(-\frac{p}{\alpha})\nonumber
\end{eqnarray}
with $\lambda={2\over 3}$ and $\nu_1=\nu_2={5\over 2}$. We set $\alpha = 0.1$, $a=-0.3$ and $b=1$.  We assume the additional right hand side $f$ for the state equation.
\end{Example}

We give the numerical results for the optimal control approximation by Algorithm \ref{Alg:3.1} with parameter $\theta = 0.4$ and $\theta = 0.5$. Figure \ref{fig:1} shows the profiles of the numerically computed optimal state and adjoint state. We present in Figure \ref{fig:2} the triangulations by Algorithm \ref{Alg:3.1} after 8 and 10 adaptive iterations. We can see that the meshes are concentrated on the reentrant corner where the singularities located. 
\begin{figure}[ht]
\centering
\includegraphics[width=7cm,height=7cm]{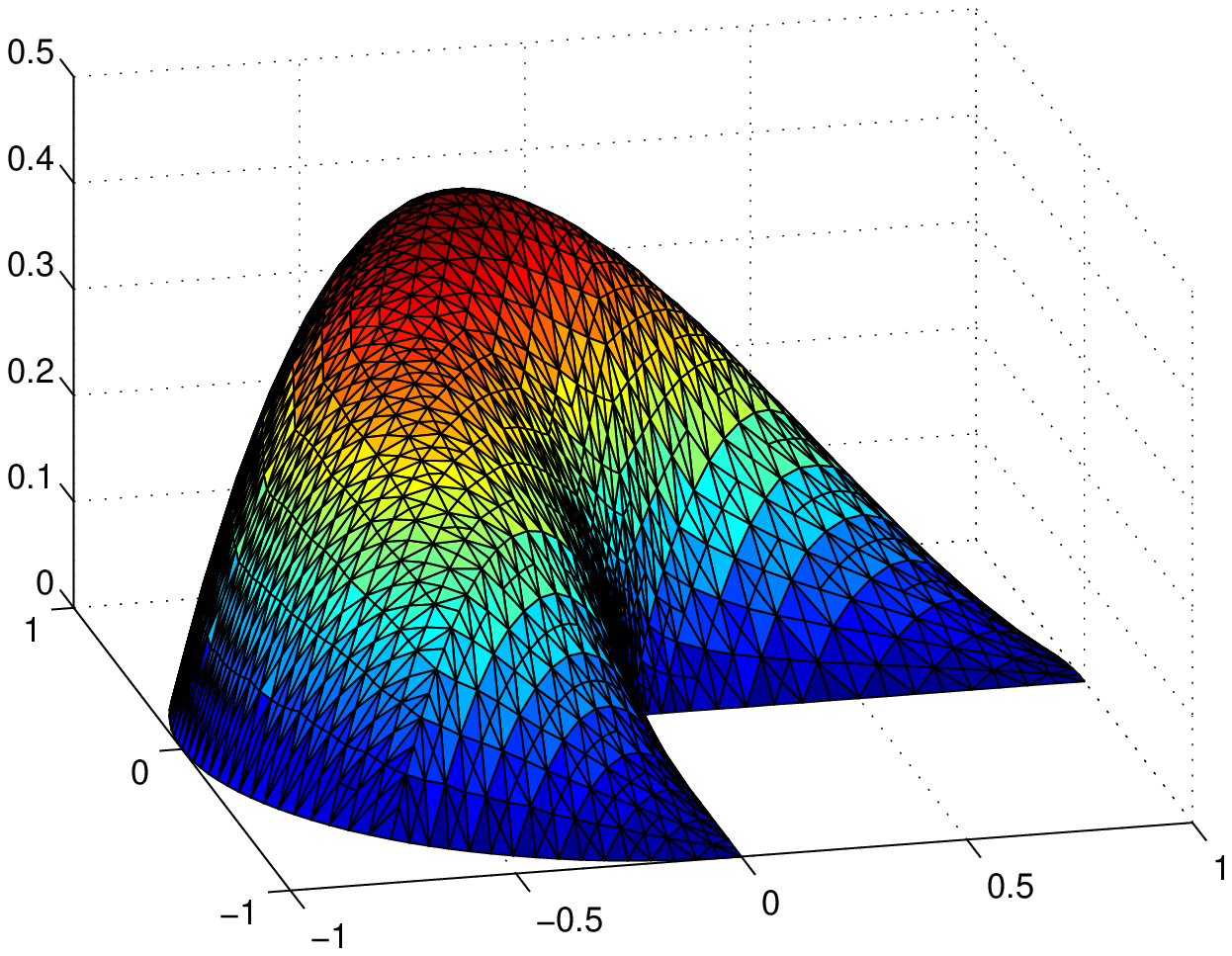}\hspace{0.3cm}
\includegraphics[width=7cm,height=7cm]{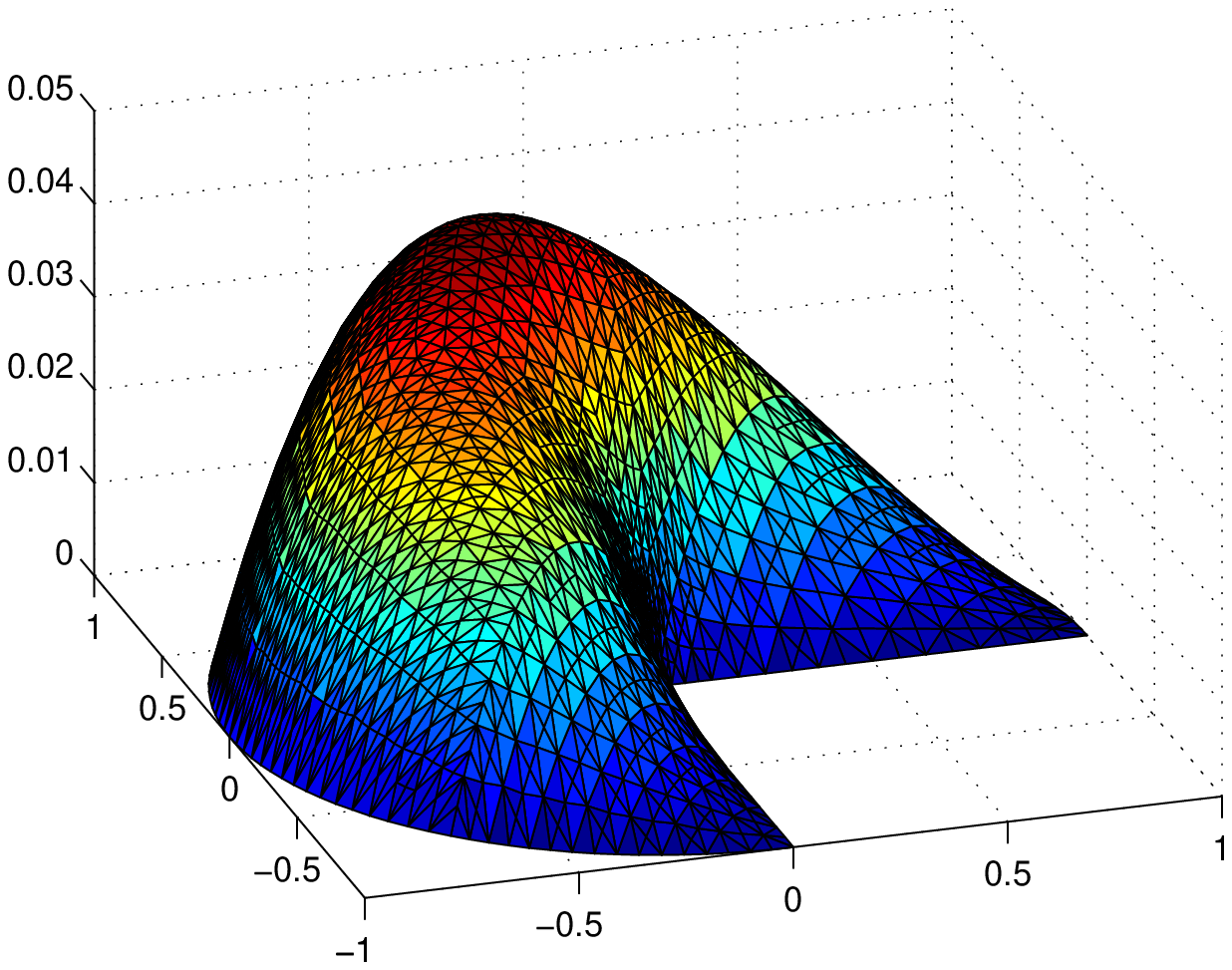}
\caption{The profiles of the discretised optimal state $y_h$ (left) and adjoint state $p_h$ (right)
for Example \ref{Exm:1} on adaptively refined mesh.}
\label{fig:1}
\end{figure}

\begin{figure}[ht]
\centering
\includegraphics[width=7cm,height=6cm]{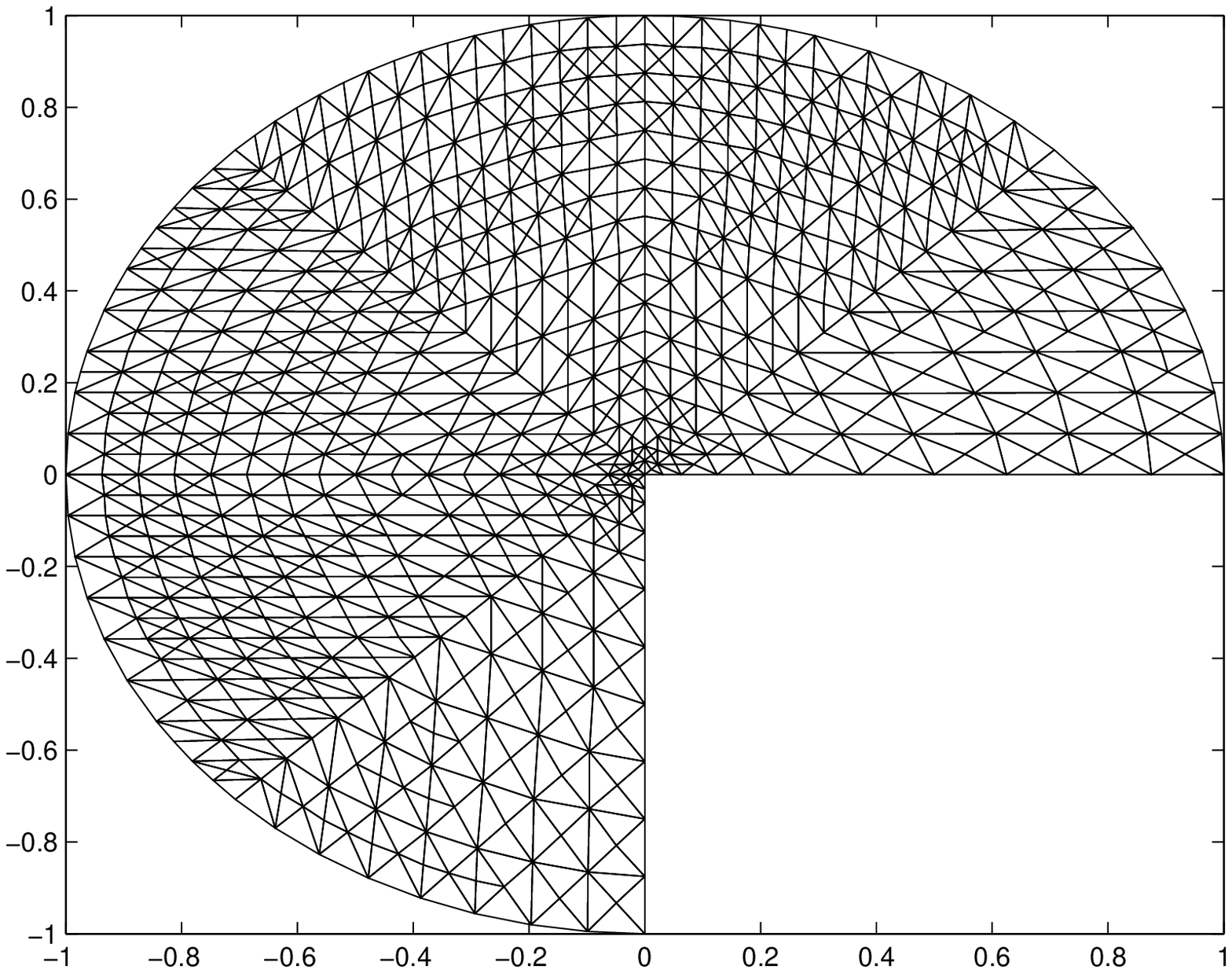}\hspace{0.05cm}
\includegraphics[width=7cm,height=6cm]{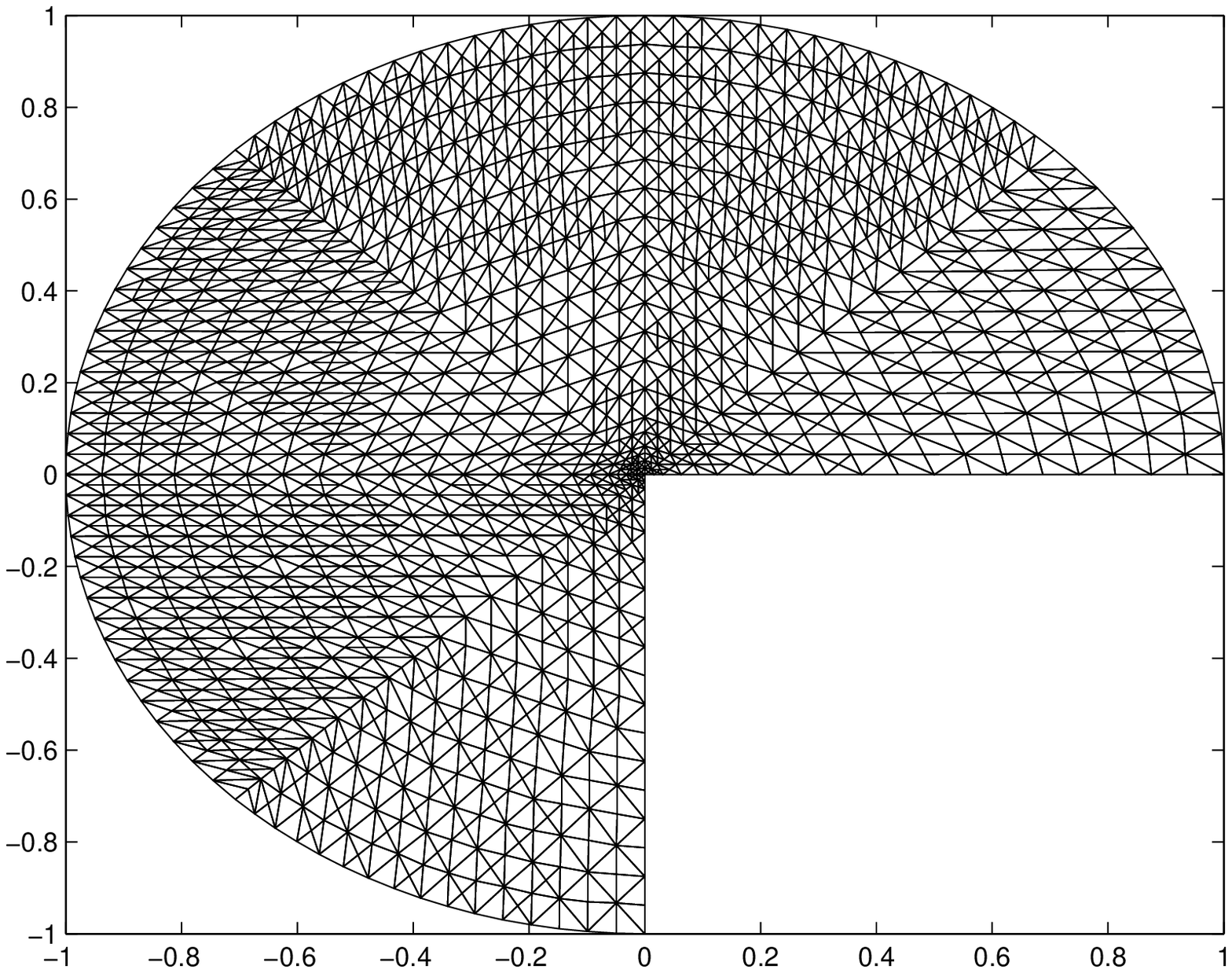}
\caption{The meshes after 8 (left) and 10 (right) adaptive iterations 
for Example \ref{Exm:1} generated by Algorithm \ref{Alg:3.1} with $\theta =0.4$.}
\label{fig:2}
\end{figure}

To illustrate the efficiency of adaptive finite element method for solving optimal control problems, we show in the left plot of Figure \ref{fig:3} the error histories of the optimal control, state and adjoint state with uniform refinement. We can only observe the reduced orders of convergence which are less than one for the energy norms of the state and adjoint state, and less than two for the $L^2$-norm of the control. In the right plot of Figure \ref{fig:3} we present the convergence behaviours of the optimal control, state and adjoint state, as well as the error estimators $\eta_{y,h}(y_h,\Omega)$ and $\eta_{p,y}(p_h,\Omega)$ for the state and adjoint state equations with adaptive refinement. In Figure \ref{fig:4} we present the convergence of the error $\|(y-y_h,p-p_h)\|_a$ and error indicator $\eta_h((y_h,p_h),\Omega)$ with $\theta = 0.4$ and $\theta = 0.5$, respectively. It is shown from Figure \ref{fig:4} that the error $\|(y-y_h,p-p_h)\|_a$ is proportional to the a posteriori error estimators, which implies the efficiency of the a posteriori error estimators given in Section 3. Moreover, we can also observe that the convergence order of error $\|(y-y_h,p-p_h)\|_a$ is approximately parallel to the line with slope $-1/2$ which is the optimal convergence rate we can expect by using linear finite elements, this coincides with our theory in Section 5. For the error $\|u-u_h\|_{0,\Omega}$ we can observe the reduction with slope $-1$, which is better than the results presented in Remark \ref{Re:control_conv}, and strongly suggests that the convergence rate for the optimal control is not optimal.

\begin{figure}[ht]
\centering
\includegraphics[width=7cm,height=7cm]{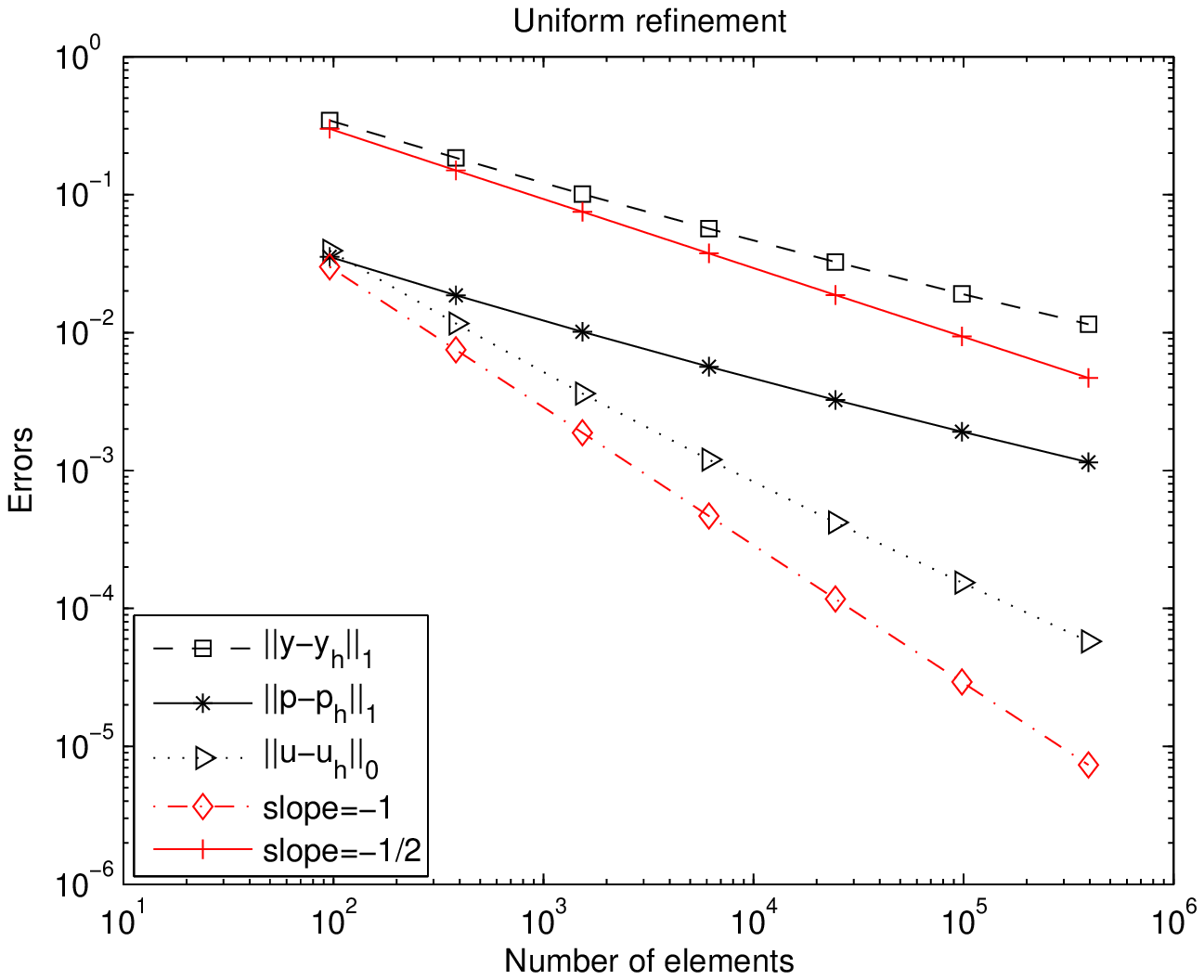}\hspace{0.3cm}
\includegraphics[width=7cm,height=7cm]{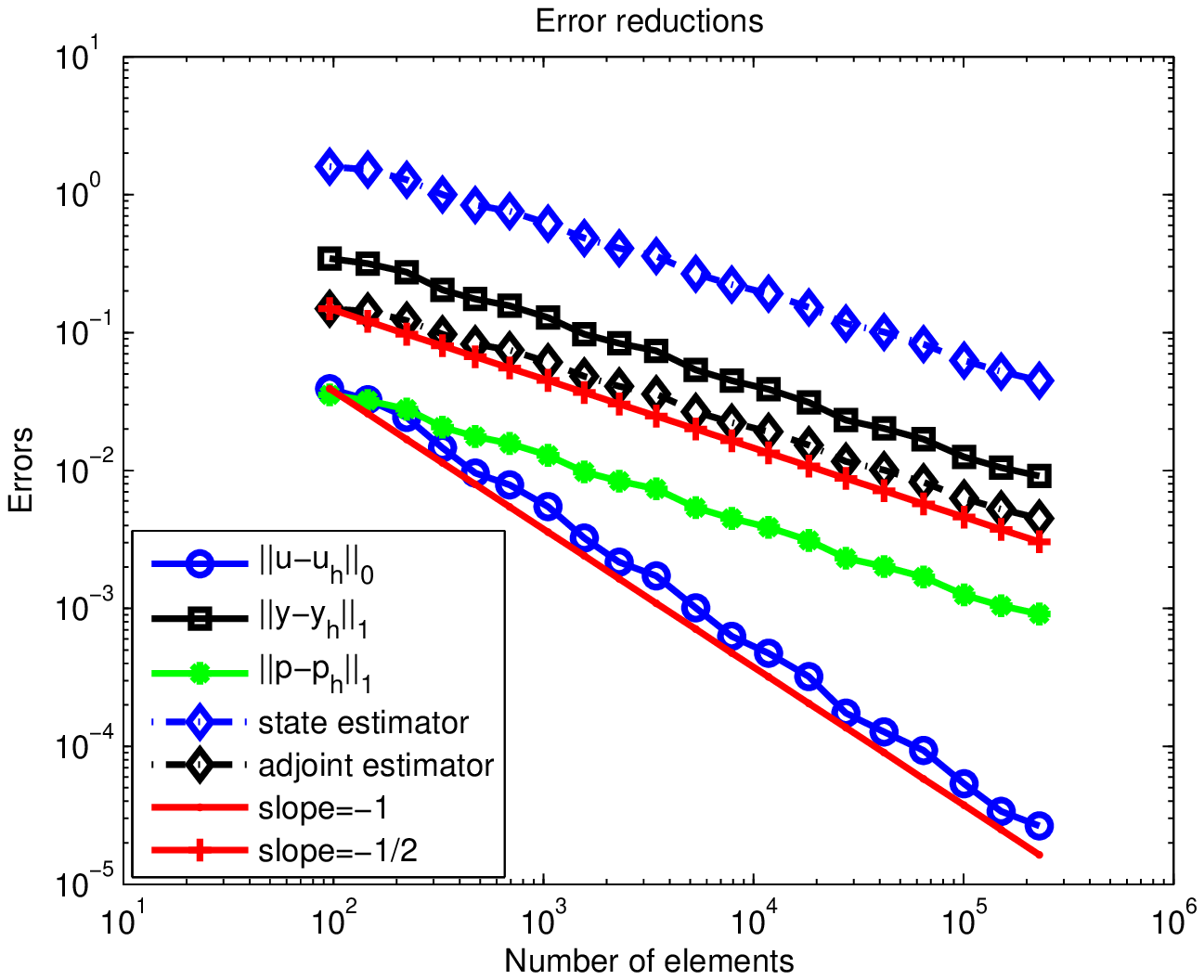}
\caption{The convergence history of the optimal control, state and adjoint state on uniformly refined meshes (left), and the convergence of the errors and estimators on adaptively refined meshes (right) for Example \ref{Exm:1} generated by Algorithm \ref{Alg:3.1}.}
\label{fig:3}
\end{figure}

\begin{figure}[ht]
\centering
\includegraphics[width=7cm,height=7cm]{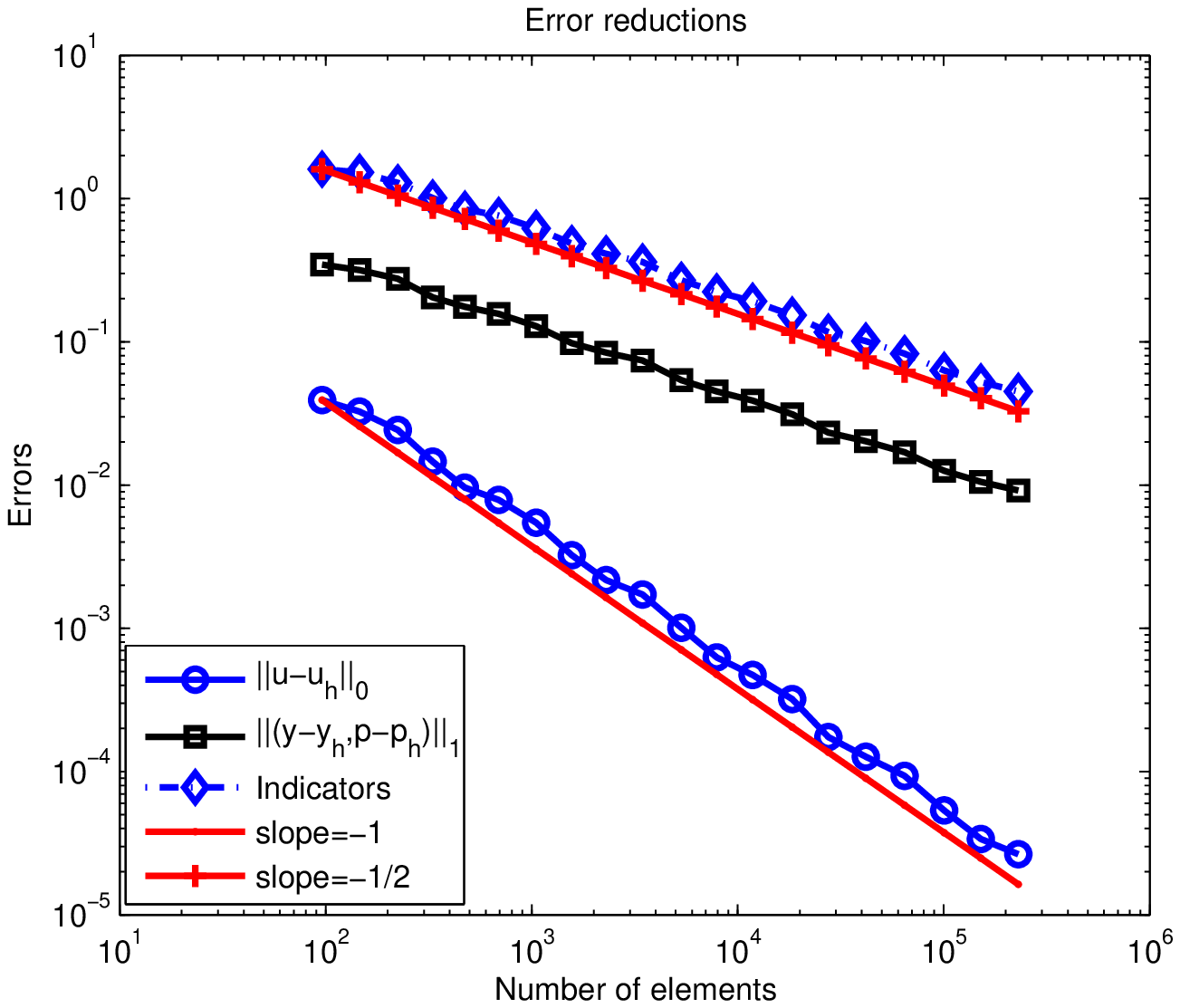}\hspace{0.3cm}
\includegraphics[width=7cm,height=7cm]{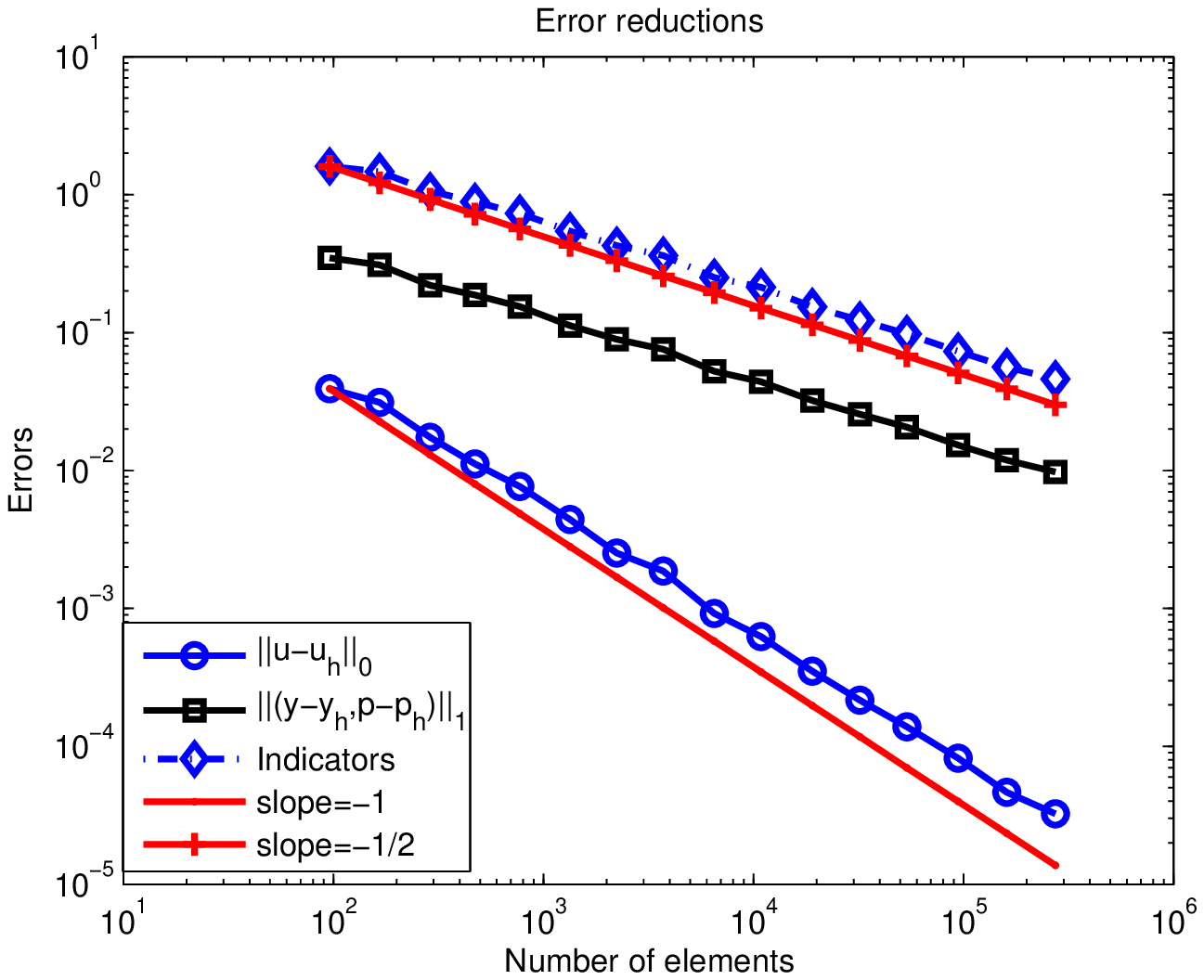}
\caption{The convergence history of the optimal control, the state and adjoint state and error indicator on adaptively refined meshes with $\theta=0.4$ (left) and $\theta=0.5$ (right) for Example \ref{Exm:1} generated by Algorithm \ref{Alg:3.1}.}
\label{fig:4}
\end{figure}

\begin{Example}\label{Exm:2}
In the second example we consider an optimal control problem without explicit solutions. we set $\Omega=(-1,1)^2$, $\alpha = 10^{-3}$, $a=-10$ and $b=10$. The desired state $y_d$ is chosen as $10$, $1$, $-10$ and $-1$ in the first, second, third and fourth quadrant, respectively.
\end{Example}

Similar to the above example Figure \ref{fig:5} shows the profiles of the numerically computed optimal state and adjoint state. We present in the left plot of Figure \ref{fig:6} the triangulation generated by Algorithm \ref{Alg:3.1} after 8 adaptive iteration with parameter $\theta=0.5$. Since there are no explicit solutions we can not show the convergence of the error  $\|(y-y_h,p-p_h)\|_a$ as in Example \ref{Exm:1}. Instead we show in the right plot of Figure \ref{fig:6} the convergence of the error indicator $\eta_h((y_h,p_h),\Omega)$, the error estimators $\eta_{y,h}(y_h,\Omega)$ and $\eta_{p,y}(p_h,\Omega)$ for the state and adjoint state equations. We can observe the error reduction with slope $-1/2$.

\begin{figure}[ht]
\centering
\includegraphics[width=7cm,height=7cm]{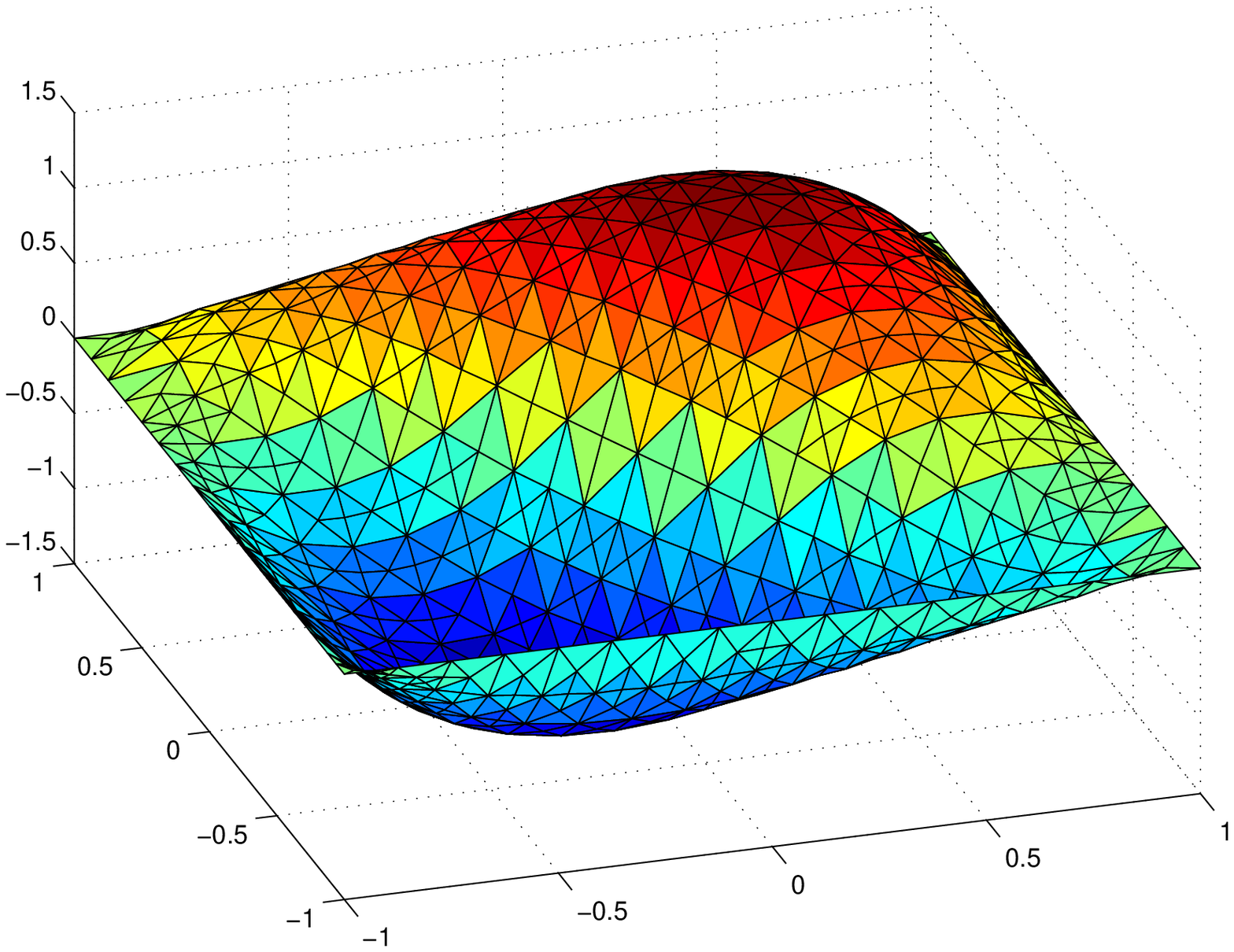}\hspace{0.3cm}
\includegraphics[width=7cm,height=7cm]{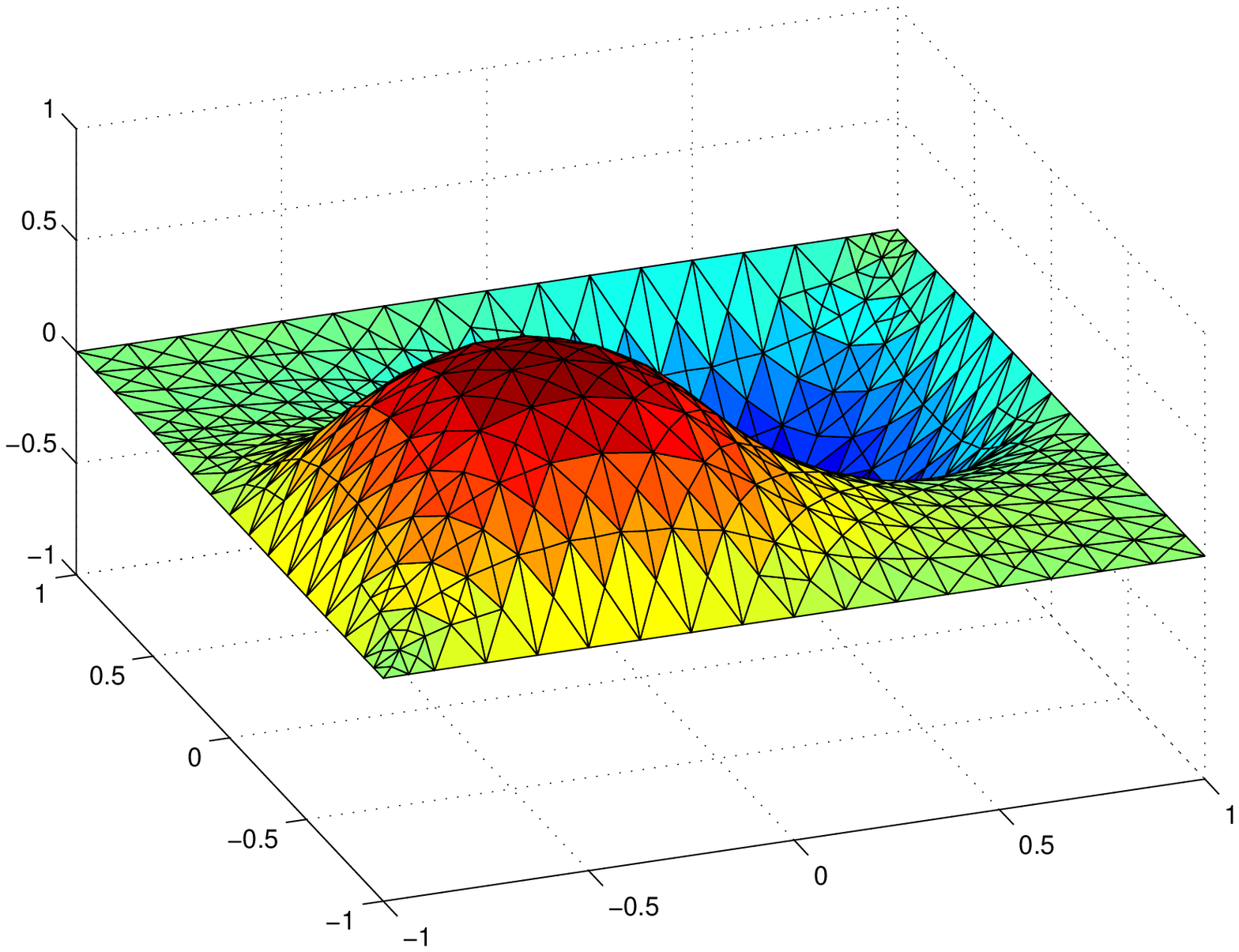}
\caption{The profiles of the discretised optimal state $y_h$ (left) and adjoint state $p_h$ (right)
for Example \ref{Exm:2} on adaptively refined mesh.}
\label{fig:5}
\end{figure}

\begin{figure}[ht]
\centering
\includegraphics[width=7.5cm,height=6cm]{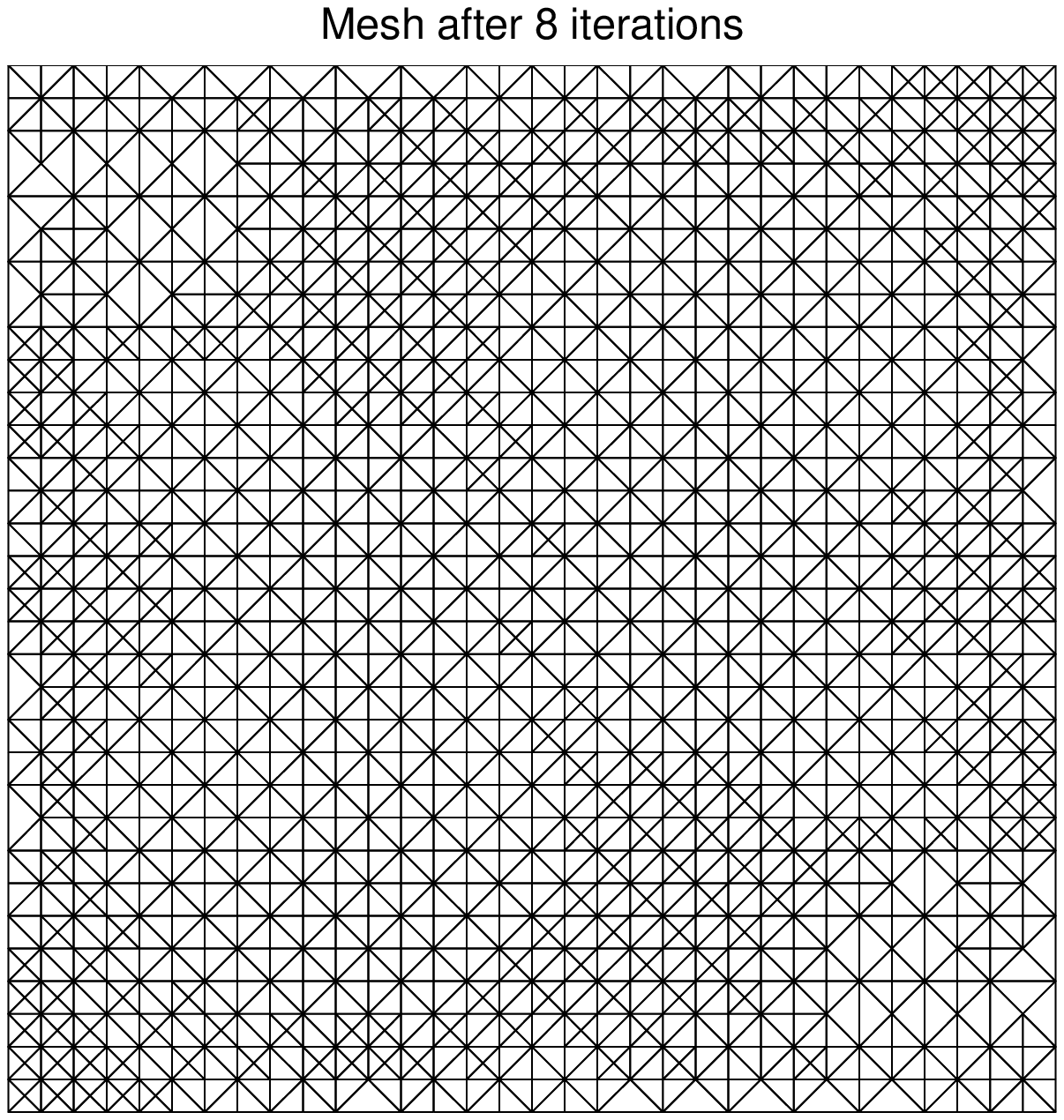}\hspace{0.05cm}
\includegraphics[width=7cm,height=6cm]{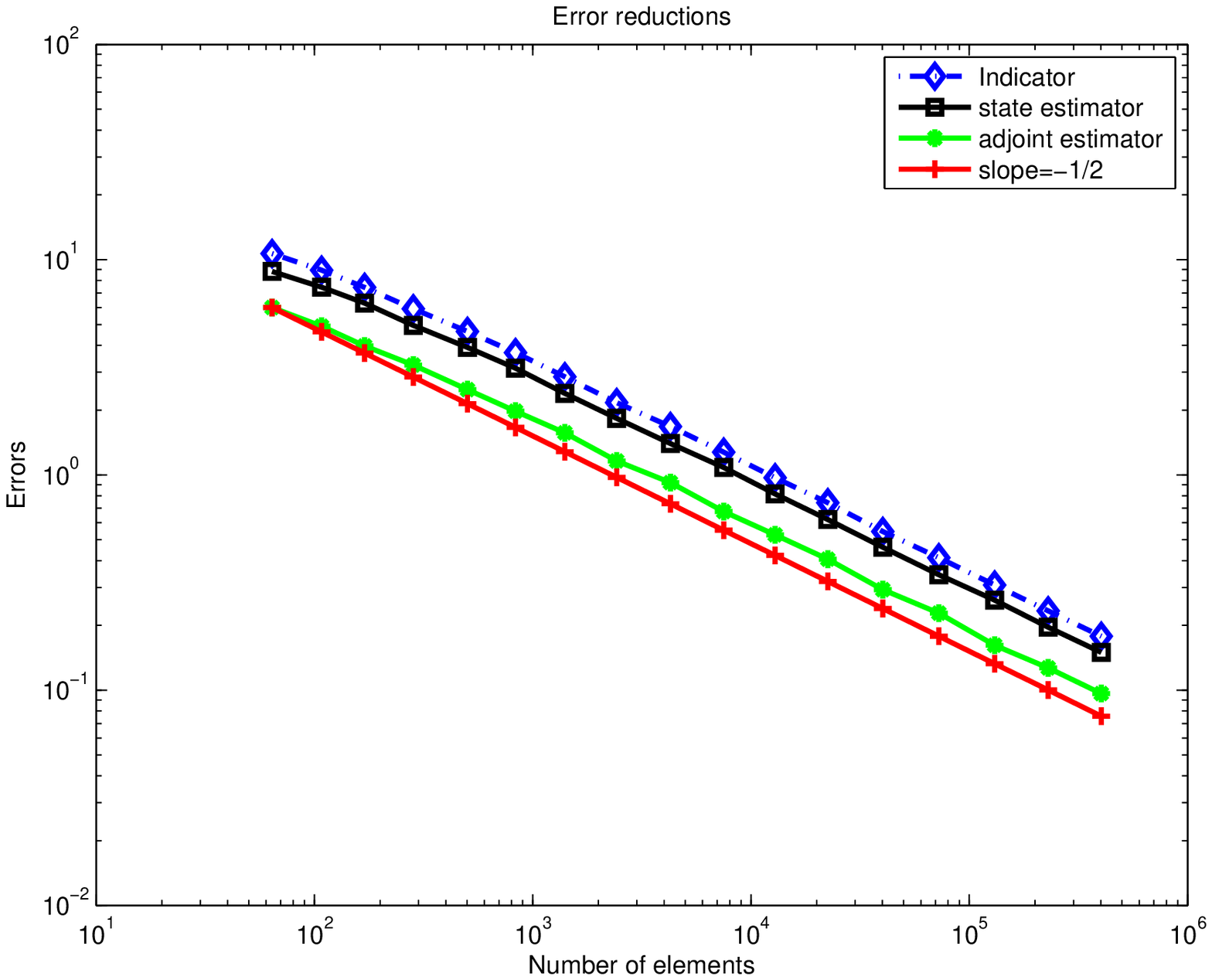}
\caption{The mesh (left) after 8 adaptive iterations and the convergence history of the error estimators on adaptively refined meshes (right) with $\theta=0.5$
for Example \ref{Exm:2} generated by Algorithm \ref{Alg:3.1}.}
\label{fig:6}
\end{figure}

\begin{Example}\label{Exm:3}
In the third example we also consider an optimal control problem without explicit solutions defined on domain $\Omega=(-1,1)\times (-1,1)\backslash [0,1)\times (x_1,0]$. We set $\alpha = 10^{-2}$, $a=0$ and $b=8$. We take the desired state $y_d=2$.
\end{Example}

We show in Figure \ref{fig:7} the profiles of the numerically computed optimal state and adjoint state, singularities for both the state and adjoint state can be observed around the reentrant corner. We present in the left plot of Figure \ref{fig:8} the triangulation generated by Algorithm \ref{Alg:3.1} after 8 adaptive iteration with parameter $\theta=0.5$ which is locally refined around the corner. Since there are no explicit solutions we also show in the right plot of Figure \ref{fig:8} the convergence of the error indicator $\eta_h((y_h,p_h),\Omega)$, the error estimators $\eta_{y,h}(y_h,\Omega)$ and $\eta_{p,y}(p_h,\Omega)$ for the state and adjoint state equations. We can also observe the error reduction with slope $-1/2$.

\begin{figure}[ht]
\centering
\includegraphics[width=7cm,height=7cm]{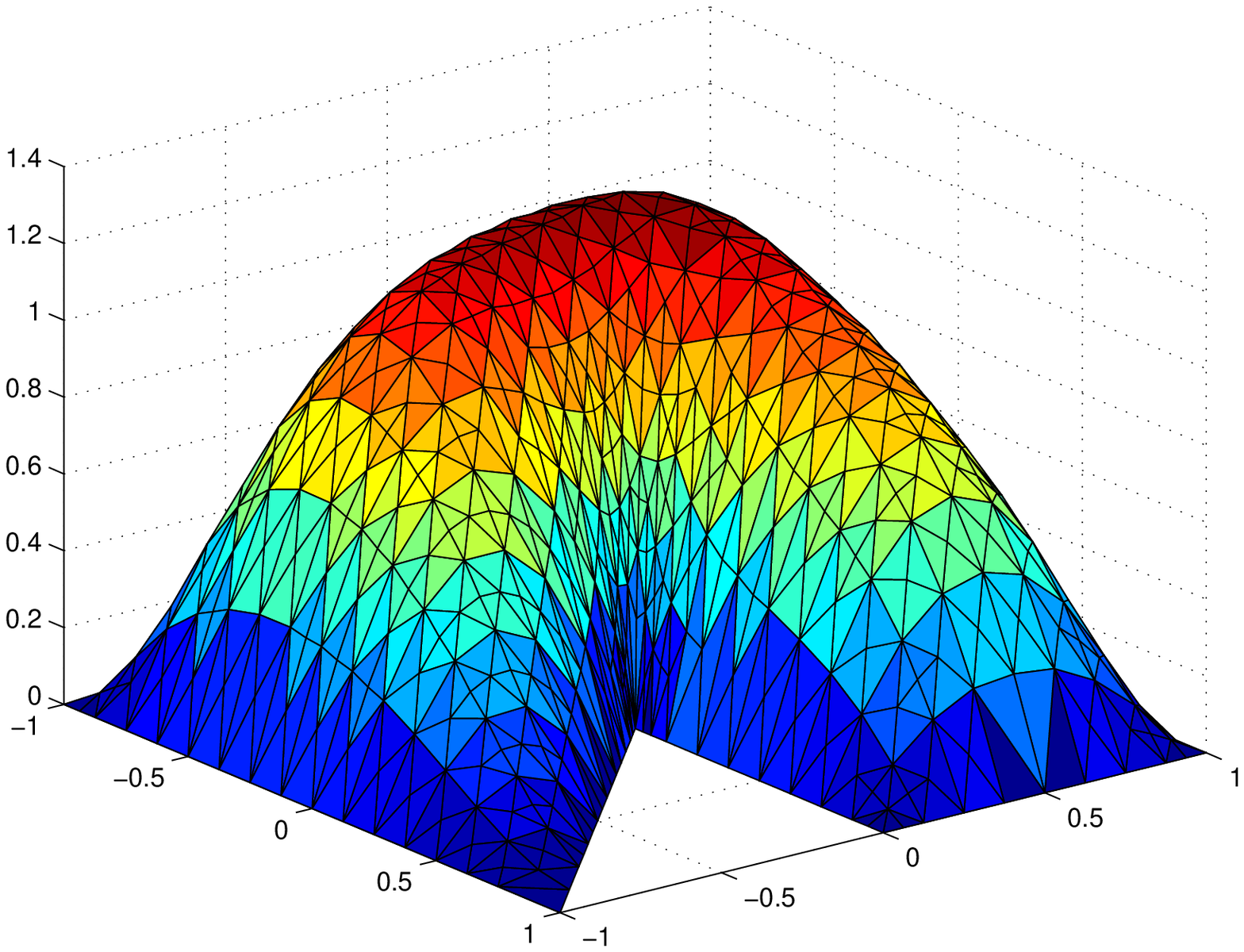}\hspace{0.3cm}
\includegraphics[width=7cm,height=7cm]{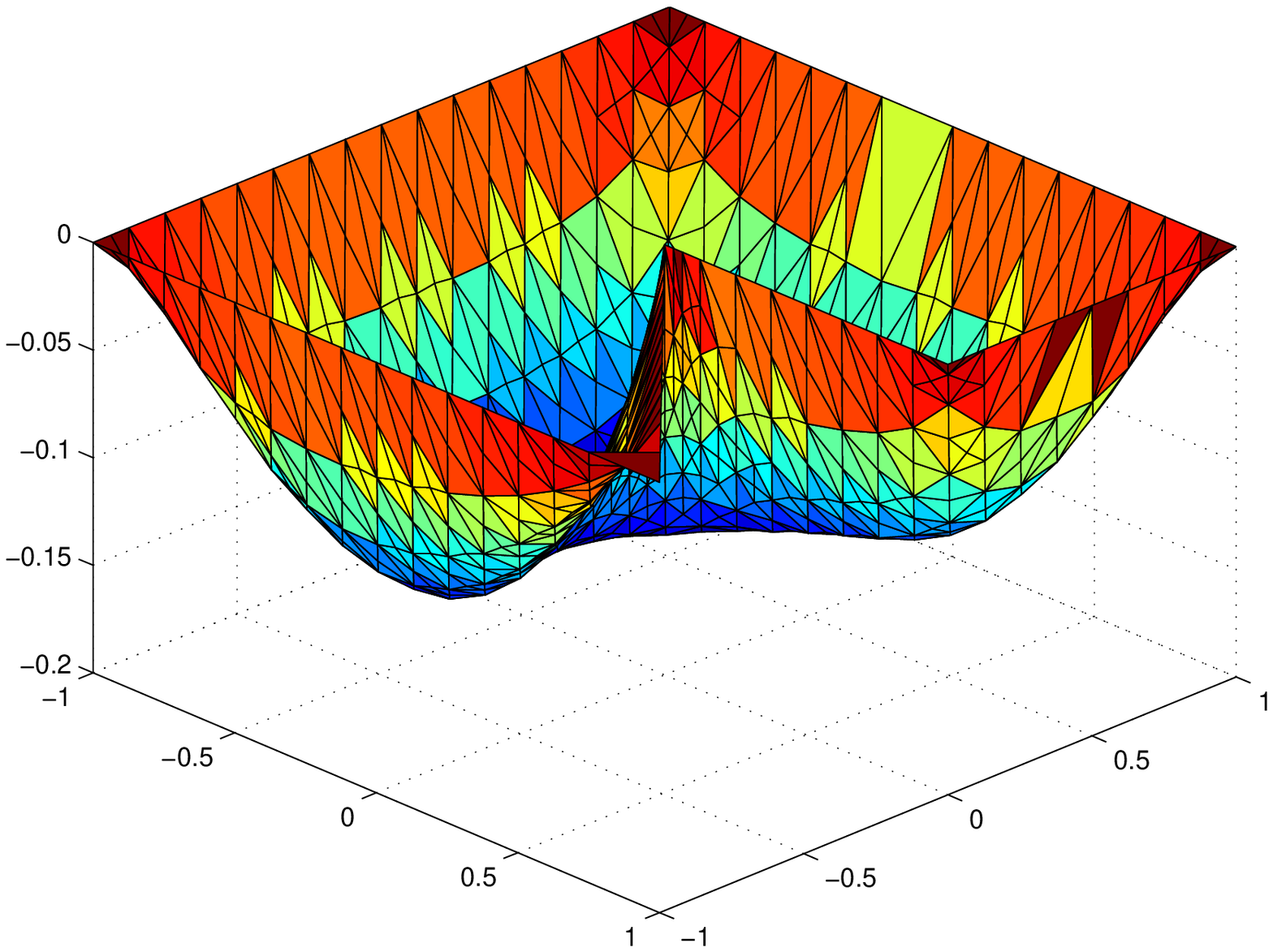}
\caption{The profiles of the discretised optimal state $y_h$ (left) and adjoint state $p_h$ (right)
for Example \ref{Exm:3} on adaptively refined mesh.}
\label{fig:7}
\end{figure}

\begin{figure}[ht]
\centering
\includegraphics[width=7.5cm,height=6cm]{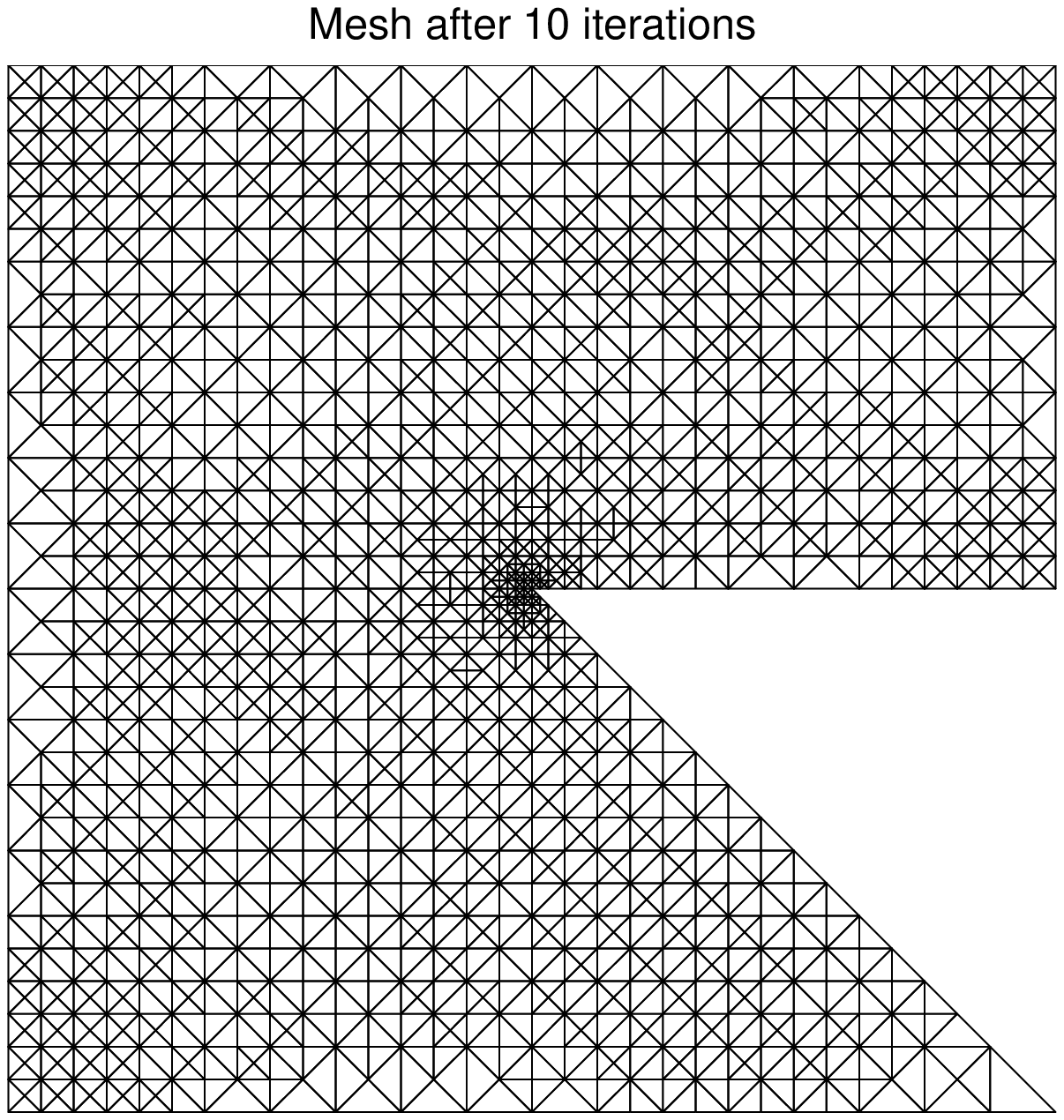}\hspace{0.05cm}
\includegraphics[width=7cm,height=6cm]{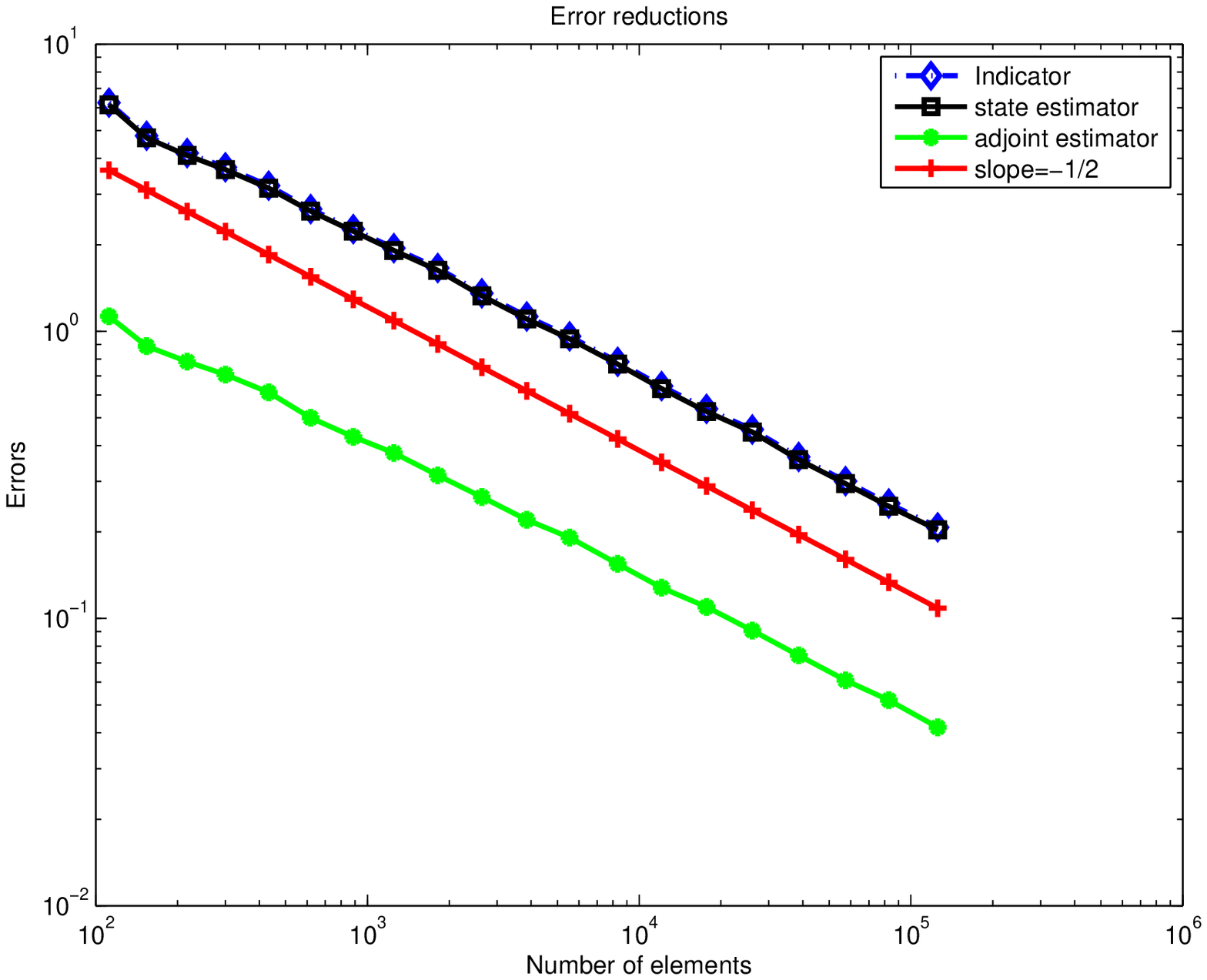}
\caption{The mesh (left) after 10 adaptive iteration and the convergence history of the error estimators on adaptively refined meshes (right) with $\theta=0.4$
for Example \ref{Exm:3} generated by Algorithm \ref{Alg:3.1}.}
\label{fig:8}
\end{figure}


\section{Conclusion and outlook}
\setcounter{equation}{0}
In this paper we give a rigorous convergence analysis of the adaptive finite element algorithm 
for optimal control problems governed by linear elliptic equation. We prove that the AFEM is a contraction, for the sum of the energy errors and the scaled error estimators of the state $y$ and the adjoint state $p$, between two consecutive adaptive loops. We also show that the AFEM yields a decay rate of the energy errors of the state $y$ and the adjoint state $p$ plus oscillations of the state and adjoint state equations in terms of the number of degrees of freedom. 

We expect that the results should also be valid for optimal Neumann boundary control problems (see \cite{Liu5}) by the following observations. The key point for the convergence analysis is the equivalence properties presented in Theorem \ref{Thm:2.2} where the relation between the finite element optimal control approximation and the standard finite element boundary value approximation is established. Consider the governing equation of the Neumann boundary control problem:
\begin{equation}\nonumber
\left\{\begin{array}{llr}
Ly=f \quad&\mbox{in}\ \Omega, \\
 \ A\nabla y\cdot n=u  \quad &\mbox{on}\ \partial\Omega.
\end{array}\right.
\end{equation}
Similar to the proof of Theorem \ref{Thm:2.2} we can conclude from the trace theorem that
\begin{eqnarray}
\|u-u_h\|_{0,\partial\Omega}\lesssim \kappa^{1\over 2}(h)(\|y-y_h\|_{a,\Omega}+\|p-p_h\|_{a,\Omega}),\nonumber
\end{eqnarray}
where $u_h$ is the discrete optimal control. Then we can obtain the counterpart of (\ref{equivalence_1})-(\ref{equivalence_2}) for Neumann boundary control problems
\begin{eqnarray}
\|y-y_h\|_{a,\Omega}&=& \|y^h-y_h\|_{a,\Omega}+O(\kappa^{1\over 2}(h))\Big(\|y-y_h\|_{a,\Omega}
+\|p- p_h\|_{a,\Omega}\Big),\nonumber\\
\|p-p_h\|_{a,\Omega}
&=& \|p^h- p_h\|_{a,\Omega} +O(\kappa^{1\over 2}(h))\Big(\|y-y_h\|_{a,\Omega}
+\|p- p_h\|_{a,\Omega}\Big)\nonumber
\end{eqnarray}
provided $h_0\ll 1$. Thus, the convergence and complexity analysis of AFEM carries out to the Neumann boundary control problems. 

There are many important issues remained unsolved for the convergence analysis of AFEM for optimal control problems compared to AFEM for boundary value problems. Firstly, at this moment we only prove the optimality of AFEM for energy errors of the state and adjoint state variables, the convergence for the optimal control $u$ is sub-optimal. To prove the optimality of AFEM for the optimal control $u$ it seems that we should work on the optimality of AFEM for boundary value problems under $L^2$-norms, as done in \cite{Demlow}. This complicates the convergence analysis with additional restrictions to the adaptive algorithms and will be postponed to future work. 

Secondly, the convergence analysis of the adaptive finite element algorithm 
for other kind of optimal control problems like Stokes control problems (see \cite{Liu3}), and non-standard finite element algorithm such as mixed finite element methods (see \cite{Chen}) remains open and will be addressed in forthcoming papers. 

Thirdly, we only prove the convergence of AFEM for optimal control problems with control constraints by using variational control discretization. The full control discretization concept by using piecewise constant or piecewise linear finite elements is also very important among the numerical methods for control problems. This kind of control discretizations results in an additional discretised control space and an additional contribution to the a posteriori error estimators (see \cite{Kohls}) which should be incorporated within the adaptive algorithm and the corresponding convergence analysis. We also intend to generalise our approach in this paper to analyse the convergence of AFEM for optimal control problems with full control discretization in the future. 


\section*{Acknowledgements}
The first author was supported by the National Basic Research Program of China under grant 2012CB821204 and the National Natural Science Foundation of China under grant 11201464. The second author acknowledged the support of the National Natural Science Foundation of China under grant 11171337. 


 \medskip

\end{document}